\newcommand{\vast}{\bBigg@{4}}
\newcommand{\Vast}{\bBigg@{5}}
\newcommand{\eps}{\varepsilon}
\newcommand{\N}{{\mathbb{N}}}
\newcommand{\R}{\mathbb{R}}
\newcommand{\p}{\partial}
\newcommand{\loc}{\textnormal{loc}}
\newcommand{\norm}[2][]{\left\|{#2}\right\|_{#1}}
\newcommand{\set}[1]{\left\{#1\right\}}
\newcommand{\textas}{\text{ as }}
\newcommand{\texton}{\text{ on }}
\newcommand{\textin}{\text{ in }}
\newcommand{\textfor}{\text{ for }}
\newcommand{\textforall}{\text{ for all }}
\newcommand{\Pg}{P_{\gamma}^{g}}
\newcommand{\Pgo}{P_{\gamma}^{g_0}}
\def\uinf{u_{\infty}}
\DeclareMathOperator{\vol}{vol}
\DeclareMathOperator{\volbar}{\overline{vol}}
\DeclareMathOperator{\const}{const}
\DeclareMathOperator{\Div}{div}
\theoremstyle{plain}
\newtheorem{thm}{Theorem}[section]
\newtheorem{lem}[thm]{Lemma}
\newtheorem{cor}[thm]{Corollary}
\newtheorem{prop}[thm]{Proposition}
\newtheorem{conj}[thm]{Conjecture}
\theoremstyle{definition}
\theoremstyle{remark}
\newtheorem{remark}[thm]{Remark}
\newcommand{\bremark}{\begin{remark} \em}
\newcommand{\eremark}{\end{remark} }
\begin{document}

\begin{frontmatter}

\title{Convergence of the fractional Yamabe flow for a class of initial data}

\author[H. Chan]{Hardy Chan}
\ead{hardy@math.ubc.ca}
\author[Y. Sire]{Yannick Sire\corref{cor1}}
\ead{sire@math.jhu.edu}
\author[Y. Sire]{Liming Sun}
\ead{sunlimingbit@gmail.com}
\address[H. Chan]{Department of Mathematics, University of British Columbia,\\ Vancouver, B.C., Canada, V6T 1Z2}
\address[Y. Sire]{Department of Mathematics, Johns Hopkins University,\\
3400 N. Charles St., Baltimore, MD 21218, USA}

\cortext[cor1]{Corresponding author}

\begin{abstract}
This work is a follow-up on the work of the second author with P. Daskalopoulos and J.L. V\'{a}zquez \cite{Daskalopoulos2017}. In this latter work, we introduced the Yamabe flow associated to the so-called fractional curvature and prove some existence result of mild (semi-group) solutions. In the present work, we continue this study by proving that for some class of data one can prove actually convergence of the flow in a more general context. We build on the approach in \cite{struwe} as simplified in the book of M. Struwe \cite{bookStruwe}.
\end{abstract}

\end{frontmatter}

\tableofcontents
\section{Introduction}

The resolution of the Yamabe problem, i.e. finding a metric in a given conformal of a closed manifold with constant scalar curvature has been a landmark in geometric analysis after the series of works \cite{yamabe,trudinger,aubin,schoen}. Later a parabolic proof of the previous elliptic results, was somehow desirable and in his seminal paper Hamilton \cite{hamilton} introduced the so-called Yamabe flow. Given a compact Riemannian manifold $(M, g_0)$ of dimension $n \geq 2$, Hamilton introduced in \cite{hamilton} the following evolution for a metric $g(t)$
\begin{equation}
\left\{
\begin{array}{l}
\partial_t g(t) = -\Big ( \text{Scal}_{g(t)}-\text{scal}_{g(t)}\Big )g(t)\\
g(0)=g_0,
\end{array}
\right .
\end{equation}
where $\text{Scal}_{g(t)}$ is the scalar curvature of $g(t)$ and
$$
\text{scal}_{g(t)}=\text{vol}_{g(t)}(M)^{-1} \int_M \text{Scal}_{g(t)}\,d\text{vol}_{g(t)}.
$$
This gave rise to an extensive literature, see e.g. \cite{chow,ye,struwe,brendle1,brendle2}.

On the other hand, in a seminal paper \cite{GZ} Graham and Zworski constructed for every $\gamma \in (0,n/2)$ a conformally covariant operator $P^g_\gamma$ on the conformal infinity of a Poincar\'e--Einstein manifold. These operators appear to be the higher-order generalizations of the conformal Laplacian. They coincide with the GJMS operators of \cite{GJMS} for suitable integer values of $\gamma$. This paved the way to define an interpolated quantity $R^g_\gamma$ for each $\gamma \in (0,n/2)$, which is just the scalar curvature for $\gamma=1$, and the $Q$-curvature for $\gamma=2$. This new notion of curvature has been investigated in \cite{QR,QG,CG, GMS, Kim2018} and is called the fractional curvature. Unfortunately, this notion of curvature (except in the case $\gamma=\frac12$ (see \cite{CG})), at the present knowledge, does not carry any clear geometric meaning. Nonetheless, from the analytical point of view, it interpolates between several well-known geometric quantities and one can hope that their investigations will shed some light on these matters.

In the aforementioned series of papers, all the technqiues used in studying the so-called fractional Yamabe problem are of elliptic nature. The aim of the present article is to develop a parabolic theory. The paper is twofold. We first collect all the necessary tools to deal with this new fractional flow. Then we prove convergence for certain class of initial data.

We now introduce the flow under study. On a compact Poincar\'{e}--Einstein manifold $(M,g_0)$ let $P_\gamma^g$, where $\gamma\in(0,1)\subset(0,\frac{n}{2})$ be the conformal fractional Laplacian satisfying
\begin{equation}\label{eq:conformalP}
P_\gamma^{g_0}(uf)=u^{\frac{n+2\gamma}{n-2\gamma}}P_\gamma^g(f)
    \quad\textforall{f}\in{C}^{\infty}(M),
\end{equation}
under the conformal change
\begin{equation}\label{eq:conformalg}
g=u^{\frac{4}{n-2\gamma}}g_0.
\end{equation}
In particular on $(\R^n,|dx|^2)$ we have $P_\gamma^{|dx|^2}=(-\Delta_{\R^n})^\gamma$.

The volume element on $(M,g_0)$ is denoted by $d\mu_0$. By replacing $g_0$ by its constant multiple we may assume the $(M,g_0)$ has unit volume, $\mu_0(M)=1$. With a conformal metric \eqref{eq:conformalg} we write
\[d\mu=d\mu_g=u^{\frac{2n}{n-2\gamma}}\,d\mu_0.\]

Let $R=R_\gamma^g=P_\gamma^g(1)=u^{-\frac{n+2\gamma}{n-2\gamma}}P_\gamma^{g_0}(u)$ be the fractional curvature. As previously mentioned, this is the scalar curvature when $\gamma=1$ and the $Q$-curvature when $\gamma=2$. Its average is denoted by
\[s=s_\gamma^g=\int_{M}R_\gamma^g\,d\mu.\]
Consider the 
volume-preserving fractional (note the suppressed $\gamma$) Yamabe flow
\[\begin{cases}
\frac{n-2\gamma}{4}\p_tg=(s-R)g,\\
g(0)=g_0,
\end{cases}\]
i.e.
\begin{equation}\label{eq:flow}\begin{cases}
\p_t{u}=(s-R)u,\\
u(0)=1.
\end{cases}\end{equation}
This new geometrical problem has been firstly introduced by Jin and Xiong in \cite{JX} where the authors investigate the flow on the sphere $M=\mathbb S^n$ with the round metric, the conformally flat case. Only in this context was the flow actually introduced, but the generalization on any compact manifold $M$ is straightforward and has been done in \cite{Daskalopoulos2017}. That the flow preserves the volume in time is a rather important property for the global existence.

Depending on the need, the flow \eqref{eq:flow} is sometimes alternatively expressed as a fast diffusion fractional equation, namely
\[\begin{split}
\frac{n-2\gamma}{n+2\gamma}\p_t\left(u^{\frac{n+2\gamma}{n-2\gamma}}\right)
    &=-P_\gamma^{g_0}(u)+s_\gamma^{g}u^{\frac{n+2\gamma}{n-2\gamma}}. \end{split}\]

It is convenient define the Yamabe functional
\begin{equation}\label{eq:E}
E(u)
=\frac{\displaystyle\int_{M}u\Pgo{u}\,d\mu_{0}}
    {\left(\displaystyle\int_{M}u^{\frac{2n}{n-2\gamma}}d\mu_{0}
        \right)^{\frac{n-2\gamma}{n}}},
\end{equation}
as it appears naturally in the variational formulation throughout the paper. Then the Yamabe constant for the class $[g]$ containing $g_0$ is given by
\begin{equation}\label{eq:Ygamma}
Y_\gamma(M,[g])=\inf_{0\neq{u}\in{H^\gamma(M)}}E(u).
\end{equation}

A feature in all the proofs of the convergence of the Yamabe flow is the use at some point the so-called Positive Mass Theorem, as has already been present in \cite{schoen,brendle1,brendle2}. This is associated to the Green's function. Suppose $M$ is the conformal infinity of a Poincar\'{e}--Einstein manifold $(X^{n+1},g_+)$. Assume $Y_\gamma(M,[g])>0$ and $\lambda_1(g_+)\geq\frac{n^2}{2}-\gamma^2$. Then for each $y\in M$, there exists a Green's function $G(x,y)$ on $\bar{X}\backslash\{y\}$ (see \cite[Prosposition 1.5]{Kim2018}). In the fractional case, the Positive Mass Conjecture can be formulated in terms of the expansion of Green's function.
\begin{conj}
Assume that $\gamma\in (0,1)$, $n>2\gamma$ and $(M,[g])$ is a Poincar\'{e}--Einstein manifold with $Y_{\gamma}(M,[g])>0$. Fix any $y\in M$. Then there exists a small neighborhood of $y$ in $(\bar X,\bar g)$, which is diffeomorphic to a small neighborhood $\mathcal{N}\subset\mathbb{R}_+^{n+1}$ of 0, such that
\begin{equation*}
G(x,0)=g_{n,\gamma}|x|^{-(n-2\gamma)}+A+\psi(x)\quad \text{for }x\in \mathcal{N}
\end{equation*}
Here $g_{n,\gamma}=\pi^{-n/2}2^{-2\gamma}\Gamma(\gamma)^{-1}\Gamma(\frac{n}{2}-\gamma)$ and $\psi$ is a function in $\mathcal{N}$ satisfying
\[
|\psi(x)|\leq C|x|^{\min\{1,2\gamma\}}
\quad \text{ and } \quad
|\nabla \psi(x)|\leq C|x|^{\min\{0,2\gamma-1\}}
\]
for some constant $C>0$.
\end{conj}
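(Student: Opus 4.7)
The strategy is to isolate the leading singularity from the flat Euclidean model and control the remainder by the natural nonlocal regularity theory, after moving to a convenient conformal gauge.

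By the conformal covariance \eqref{eq:conformalP}, the Green's function transforms in a controlled way, so I may replace $g_0$ by a representative $g$ in the conformal class admitting \emph{conformal normal coordinates} centered at $y$ (a $\gamma$-analogue of Lee--Parker, obtained by choosing the conformal factor so that $\det g_{ij}(x) = 1 + O(|x|^N)$ for arbitrarily large $N$). In such coordinates a neighborhood of $y$ in $(\bar X, \bar g)$ is diffeomorphic to $\mathcal{N}\subset\R_+^{n+1}$ with $y\leftrightarrow 0$, and $P_\gamma^g$ acts as a perturbation of the Euclidean operator $(-\Delta_{\R^n})^\gamma$, with the metric perturbation vanishing to high order at $y$. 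Setting $\Phi(x) = g_{n,\gamma}|x|^{-(n-2\gamma)}$ (the fundamental solution of $(-\Delta_{\R^n})^\gamma$ on $\R^n$) and $v = G(\cdot,0) - \Phi$, I would verify in the distributional sense that
\begin{equation*}
P_\gamma^g v = -\bigl(P_\gamma^g - (-\Delta_{\R^n})^\gamma\bigr)\Phi =: f \textin \mathcal{N},
\end{equation*}
with $f$ far less singular than $\Phi$ at $0$, thanks to the conformal gauge and the cancellation built into $\Phi$.

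Next I would apply the nonlocal Schauder regularity theory for $P_\gamma^g$, either directly on $M$ via its singular integral representation or after lifting to the degenerate-elliptic Caffarelli--Silvestre-type extension in $X$ (in the spirit of Chang--Gonz\'alez). Once $v$ is shown to be continuous at $0$, set $A := v(0)$; then $\psi := v - A$ vanishes at $0$, and a standard bootstrap combined with Taylor-expanding the coefficients of $f$ yields the optimal decay $|\psi(x)| \leq C|x|^{\min\{1,2\gamma\}}$ with the gradient bound $|\nabla\psi(x)| \leq C|x|^{\min\{0,2\gamma-1\}}$. The two exponents reflect two competing scales: the classical $C^1$-scale coming from the smoothness of the leading symbol after gauge fixing, and the intrinsic $C^{2\gamma}$-scale of a fractional operator.

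The main obstacle I anticipate is the middle step: to realize exactly the stated exponent $\min\{1,2\gamma\}$, one must quantify precisely how singular $f$ is allowed to be at $0$, and this hinges on how effectively the conformal normal gauge annihilates the local symbol of $P_\gamma^g - (-\Delta_{\R^n})^\gamma$ applied to $\Phi$. The nonlocality of $P_\gamma^g$ implies that even with the metric $O(|x|^N)$-close to flat, the operator difference still senses the global geometry through a tail term. A careful decomposition $\Phi = \chi\Phi + (1-\chi)\Phi$ is needed: the local piece is handled by a sharp curvature expansion of the extended metric in $X$, while the tail contribution must be shown to yield a smooth error. Closing the bootstrap with the optimal exponent, rather than a strictly weaker one, is the delicate point.
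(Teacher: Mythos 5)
The statement you set out to prove is labeled a \textbf{Conjecture} in the paper, and the authors do not prove it. Immediately before stating it they write that the fractional positive mass theory ``is out of reach at the moment, for several reasons due to the non-locality assumption of the operator and the lack of tools to treat this case,'' and in Theorem \ref{main} they simply \emph{assume} it (``Assume also the Positive Mass Conjecture holds with $A>0$''). There is therefore no proof in the paper against which to compare your argument; the correct response here was to recognize that the statement is an unproven hypothesis of the paper, not a result to be re-derived.

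Taking your sketch on its own terms, the framework you propose --- subtract the flat fundamental solution $\Phi=g_{n,\gamma}|x|^{-(n-2\gamma)}$ in a conformal normal gauge and bootstrap the remainder --- is the natural one, and it is essentially what Kim--Musso--Wei carry out under additional curvature hypotheses. But your argument does not close. After gauge fixing, the error term $f=-\bigl(P_\gamma^g-(-\Delta_{\R^n})^\gamma\bigr)\Phi$ is not a local object: because $P_\gamma^g$ is nonlocal, the ``tail'' contribution survives no matter how flat the metric is made near the pole, and one has to show it produces a remainder with exactly $C^{\min\{1,2\gamma\}}$ regularity, uniformly up to $0$. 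You identify this yourself as ``the delicate point'' and leave it unresolved, which is to say the central analytic step of the proof is not actually carried out. Moreover, the content the paper truly invokes downstream is the sign of the constant --- $A>0$, the fractional positive mass inequality --- and your plan produces only the \emph{existence} of some $A$, saying nothing about its positivity; so even a completed version of your sketch would not deliver what the paper needs, and would not touch the part of the statement that is genuinely regarded as open.
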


The Positive Mass Theorem for the operators $P^g_\gamma$ even for $\gamma \in (0,1)$ is out of reach at the moment, for several reasons due to the non-locality assumption of the operator and the lack of tools to treat this case. So we naturally assume that Positive Mass holds in our main theorem as follows.

\begin{thm}\label{main}
For $\gamma\in(0,1)$, assume that $Y_\gamma(M,[g])>0$, $\lambda_1(g_+)\geq \frac{n^2}{2}-\gamma^2$ and,  in the case $\gamma\in (\frac{1}{2},1)$, $H=0$, where $H$ denotes the mean curvature of $\partial_\infty X=M$. Assume also the Positive Mass Conjecture holds with $A>0$. If $E$ is initially small in the sense that\footnote{Indeed, since $u(0)=1$, the initial energy is given by
\[E(u(0))=\frac{\int_{M}R(0)\,d\mu_0}{\mu_0(M)^{\frac{n-2\gamma}{n}}}=s_0.\]
}
\begin{equation}\label{eq:s0}
s_0\leq\left[(Y_\gamma(M,[g]))^{\frac{n}{2\gamma}}
        +Y_\gamma(\mathbb{S}^n)^{\frac{n}{2\gamma}}
    \right]^{\frac{2\gamma}{n}},
\end{equation}
then the flow \eqref{eq:flow} converges.
\end{thm}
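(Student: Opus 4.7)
The plan is to follow the Struwe framework, adapting it to the non-local setting of $P_\gamma^{g_0}$. The first order of business is to establish the basic monotonicity and conservation laws along the flow \eqref{eq:flow}. Differentiating the numerator of $E$ and using self-adjointness of $P_\gamma^{g_0}$ together with the equation $\partial_t u = (s-R)u$ gives
\[
\frac{d}{dt}E(u(t)) = -c_{n,\gamma}\int_M (R-s)^2\,d\mu \leq 0,
\]
while the volume-preserving nature of the flow keeps the denominator in \eqref{eq:E} constant. Hence $E(u(t))\leq E(u(0))=s_0$ for all $t\geq 0$, and $\int_0^\infty\int_M(R-s)^2\,d\mu\,dt<\infty$. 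Together with the global existence of a mild solution established in \cite{Daskalopoulos2017}, this produces a sequence of times $t_k\to\infty$ along which $R(t_k)\to s_\infty$ in $L^2(d\mu_{t_k})$.

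The second step is to run a concentration-compactness alternative on the family $\{u(t_k)\}$. Using the compact embedding $H^\gamma(M)\hookrightarrow L^p(M)$ for $p<\frac{2n}{n-2\gamma}$ together with the uniform energy bound, either the sequence is precompact in $H^\gamma(M)$ and converges (subsequentially) to a weak solution $u_\infty>0$ of the stationary Yamabe equation $P_\gamma^{g_0}u_\infty = s_\infty u_\infty^{\frac{n+2\gamma}{n-2\gamma}}$, or concentration occurs at finitely many points $y_1,\dots,y_K\in M$. In the second case a localized blow-up/rescaling argument, exploiting the covariance identity \eqref{eq:conformalP} and the asymptotic Euclidean character of $P_\gamma^{g_0}$ near a point, extracts at each $y_j$ a bubble $U_{\lambda_j,y_j}$ which after rescaling is a standard Aubin--Talenti solution of $(-\Delta_{\R^n})^\gamma U=U^{\frac{n+2\gamma}{n-2\gamma}}$ on $\R^n$. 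Standard energy quantization then yields
\[
s_0^{\frac{n}{2\gamma}}\geq E(u_\infty)^{\frac{n}{2\gamma}} + K\cdot Y_\gamma(\mathbb{S}^n)^{\frac{n}{2\gamma}}.
\]

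Combined with $E(u_\infty)\geq Y_\gamma(M,[g])$ and the initial-energy hypothesis \eqref{eq:s0}, this inequality forces $K\leq 1$, so at most a single bubble can form. To rule out the case $K=1$ we use the Positive Mass Conjecture. If a single bubble forms at $y_1\in M$, one can build a test function by splicing the rescaled bubble $U_{\lambda,y_1}$ with the Green's function $G(\cdot,y_1)$ using a cut-off, computing $E$ of this test function by expanding $G(x,y_1)=g_{n,\gamma}|x-y_1|^{-(n-2\gamma)}+A+\psi$ as guaranteed by the conjecture. Because $A>0$, the splicing produces a strict inequality $E(\text{test})< Y_\gamma(\mathbb{S}^n)$, which, together with the stationary equation satisfied by $u_\infty$ and the energy accounting above, contradicts the monotonicity $E(u(t_k))\to E(u_\infty)\geq Y_\gamma(M,[g])$. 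Therefore $K=0$ and the flow is precompact.

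Finally, once subsequential convergence in $H^\gamma$ is established, parabolic regularity for the fractional Yamabe flow (the fast-diffusion formulation recorded after \eqref{eq:flow}) upgrades it to convergence in higher norms, and a \L ojasiewicz--Simon type argument applied to the analytic functional $E$ rules out oscillation between distinct limits, giving full convergence as $t\to\infty$. I expect the hardest step to be the bubble extraction and energy quantization in the non-local setting: unlike the classical case, one cannot simply localize because $P_\gamma^{g_0}$ is non-local, so one must work with the Caffarelli--Silvestre type extension into the Poincar\'e--Einstein filling $(X,g_+)$, carefully tracking how the rescaling near $y_j$ interacts with the boundary asymptotics and the mean-curvature hypothesis $H=0$ when $\gamma\in(\tfrac12,1)$, and verifying that the bubble carries exactly the quantum $Y_\gamma(\mathbb{S}^n)^{n/(2\gamma)}$ of energy.
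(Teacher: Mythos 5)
Your outline gets several ingredients right---monotonicity of the Yamabe energy, profile decomposition, the bound $L\leq 1$, and the appearance of the Positive Mass Conjecture---but the step where you rule out the single-bubble case $K=1$ contains a genuine gap, and as a result the logical structure of the argument is reversed relative to what the paper (and Brendle's template) actually does.

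When $K=1$, the profile decomposition forces $u_\infty\equiv 0$ and the volume identity \eqref{eq:vol_decomp} gives $s_\infty=Y_\gamma(\mathbb{S}^n)$ exactly. In particular $E(u_\infty)$ is not defined (it is $0/0$), so the chain ``$E(u(t_k))\to E(u_\infty)\geq Y_\gamma(M,[g])$'' is vacuous. What you do have is $s(t_k)\to Y_\gamma(\mathbb{S}^n)$ from above, and the PMC test function only tells you $Y_\gamma(M,[g])<Y_\gamma(\mathbb{S}^n)$. These two facts are perfectly consistent: the flow is a gradient-like evolution, not a minimizing sequence, so the energy can hover above the non-critical level $Y_\gamma(\mathbb{S}^n)$ while a bubble threatens to form, without ever approaching $Y_\gamma(M,[g])$. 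The qualitative strict inequality produced by the positive mass term does not, by itself, contradict anything.

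The paper handles this differently, and it is worth understanding why. Proposition \ref{prop:4.16} is a quantitative {\L}ojasiewicz--Simon inequality, $s(t_k)-s_\infty\leq C F_{\frac{2n}{n+2\gamma}}(g(t_k))^{\frac{n+2\gamma}{2n}(1+\delta)}$, which is proved in \emph{both} the compact case $u_\infty>0$ (Section \ref{sec:compact}, via the weighted spectral decomposition and Lemma \ref{Lemma6.5}) \emph{and} the noncompact case $u_\infty\equiv 0$ (Section \ref{sec:noncompact}). In the noncompact case the Positive Mass Conjecture enters precisely to guarantee $E(v_k)<s_\infty$ for the PMC-corrected bubble $v_k=\alpha_k u_{(x_k,\eps_k)}$, which supplies the sign needed to drop the mass term in the error estimate $J_k$---this is the quantitative substitute for your ``contradiction.'' The {\L}ojasiewicz inequality then yields $s(t)-s_\infty\lesssim t^{-\frac{1+\delta}{1-\delta}}$, hence $\int_0^\infty F_2(g(t))^{1/2}\,dt<\infty$, hence uniform non-concentration of curvature (Proposition \ref{prop:3.6}), hence uniform two-sided bounds on $u$ (via Proposition \ref{prop:uni_upper}). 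It is only at this last stage that bubbling is excluded, a posteriori, and precompactness follows. So {\L}ojasiewicz--Simon is not a final cleanup step to remove oscillation, as you position it; it is the engine that produces precompactness in the first place. You would need to restructure your argument to prove the {\L}ojasiewicz-type inequality along an arbitrary sequence $t_k\to\infty$ \emph{before} you can conclude anything about compactness, and you would also need the second-variation coercivity estimates (Proposition \ref{Propo6.9} and Lemma \ref{lem:orth-decomp-2}) which your sketch omits entirely.
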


\begin{remark} For $\gamma=\frac12$, \cite{almaraz5} has proved the convergence of flow under more general assumptions.
As previously mentioned, the operators $P^g_\gamma$, hence the fractional curvatures, are defined for every number (up to resonances) between $0$ and $n/2$. However, several major difficulties arise when one considers $\gamma >1$. First the maximum principle fails at the elliptic level and second the parabolic theory is completely open in this range. We leave as an open problem the investigation of these higher order curvatures. However, we will mention in the present paper the argument working in the larger range $\gamma >1$.
\end{remark}

\begin{remark}
In our main theorem, we didn't specify in which sense the flow converges. Following previous works, the flow is globally defined and H\"older continuous. It is an open question to prove that this is actually smooth, though such a result is expected. Implicitly, we assume the flow to be smooth in order to use Simon's inequality. The only proof of smoothness of the flow is in the Euclidean setting (see \cite{Vsmooth}) and the proof does not adapt straightforwardly to the manifold case. We postpone such result to future work.
\end{remark}

Let us also remark that, on the other hand, singular solutions do exist, at least for the elliptic problem. For the classical Yamabe problem, solutions with a prescribed singular set have been constructed by Mazzeo and Pacard \cite{Mazzeo-Pacard96} in 1996. This is recently extended by Ao, DelaTorre, Fontelos, Gonz\'{a}lez, Wei and the first author \cite{ACDFGW18} to the fractional case $\gamma\in(0,1)$. By a result of Gonz\'{a}lez, Mazzeo and the second author \cite{GMS}, the dimension $k$ of the singularity satisfies an inequality that includes in particular $k<(n-2\gamma)/2$. When $\gamma=1$, such dimension restriction is sharp according to the celebrated result of Schoen and Yau \cite{schoen-yau}. This is also known to Chang, Hang and Yang \cite{Chang-Hang-Yang04} when $\gamma=2$.

Our strategy follows the one in the book of M. Struwe \cite{bookStruwe} simplifying his original argument in virtue of the works of Brendle \cite{brendle1, brendle2}. This is based on a series of curvature bounds which allow compactness and a
recent global compactness result \cite{Palatucci2015}
in the spirit of Struwe's original one, developed
by Palatucci and Pisante (holding actually for all powers of $\gamma \in (0,n/2)$). The nonlocality of the flow induces several difficulties that one has to overcome using new inequalities which will be described over the paper.

\section{Preliminaries and technical tools}

In this section, we provide several tools to deal with our conformally covariant operators of fractional orders.

We will always assume that
$(M,g_0)$ is the conformal infinity of $(X,\bar{g}_0)$, both
equipped with appropriate metrics, and $\rho$ is
the associated defining function. Details can be
found in \cite{CG}.

\begin{prop}[Integration by parts]\label{lem:byparts}
Assume $H=0$ when $\gamma\in (\frac12,1)$. For any $v,w\in{C}^{\infty}(M)$, we have
\[\int_{M}P_{\gamma}^{g}(v)w\,d\mu=\int_{M}P_{\gamma}^{g}(w)v\,d\mu.\]
\end{prop}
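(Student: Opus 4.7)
The plan is to realize $P_\gamma^g$ as a Dirichlet-to-Neumann-type operator for a symmetric degenerate elliptic extension on the filling $(X,\bar g_0)$, and then read off the claimed symmetry from Green's identity on $X$, where it becomes transparent from the symmetry of the associated bilinear form.

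First I would reduce to the background metric $g_0$. Writing $g=u^{\frac{4}{n-2\gamma}}g_0$ and combining the conformal covariance \eqref{eq:conformalP} with $d\mu=u^{\frac{2n}{n-2\gamma}}\,d\mu_0$, a direct computation gives
\[
\int_{M}P_\gamma^g(v)\,w\,d\mu=\int_{M}P_\gamma^{g_0}(uv)\,(uw)\,d\mu_0.
\]
Since $f\mapsto uf$ is a bijection on $C^\infty(M)$, it suffices to prove the identity for the unperturbed metric, namely $\int_{M}P_\gamma^{g_0}(V)W\,d\mu_0=\int_{M}P_\gamma^{g_0}(W)V\,d\mu_0$ for arbitrary $V,W\in C^\infty(M)$.

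Next I would invoke the extension characterization of $P_\gamma^{g_0}$ as developed in \cite{CG}. For each $V\in C^\infty(M)$ there is a unique solution $\mathcal{U}=\mathcal{U}_V$ of a degenerate elliptic boundary value problem of the form
\[
\begin{cases}
-\Div(\rho^{1-2\gamma}\nabla\mathcal{U})+E(\rho)\,\mathcal{U}=0 & \textin X,\\
\mathcal{U}=V & \texton M,
\end{cases}
\]
together with the trace formula $P_\gamma^{g_0}(V)=-d_\gamma\lim_{\rho\to 0^+}\rho^{1-2\gamma}\p_\rho\mathcal{U}_V$ for a universal constant $d_\gamma>0$. When $\gamma\in(\tfrac12,1)$ the near-boundary expansion of $\mathcal{U}_V$ carries a subleading contribution proportional to $HV$; the hypothesis $H=0$ guarantees this contribution vanishes, so the trace formula holds without a curvature correction. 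Multiplying the extension equation for $\mathcal{U}_V$ by $\mathcal{U}_W$ and integrating by parts on $X$ (the weight $\rho^{1-2\gamma}$ is integrable near $\rho=0$ since $\gamma<1$, and the behavior in the interior of $X$ is controlled by the decay of $\mathcal{U}_V$) produces
\[
\int_X\rho^{1-2\gamma}\nabla\mathcal{U}_V\cdot\nabla\mathcal{U}_W\,d\bar\mu_0+\int_X E(\rho)\,\mathcal{U}_V\mathcal{U}_W\,d\bar\mu_0=\frac{1}{d_\gamma}\int_M W\,P_\gamma^{g_0}(V)\,d\mu_0.
\]
The left-hand side is manifestly symmetric in $(V,W)$, forcing the right-hand side to be symmetric as well.

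The main technical obstacle is the careful justification of the integration by parts and of the trace formula in the range $\gamma\in(\tfrac12,1)$, where the boundary expansion
\[
\mathcal{U}_V=V+\rho\,a_1(V,H)+\rho^{2}\,a_2(V,H,\Delta V)+\cdots+\rho^{2\gamma}V^{(1)}+\cdots
\]
contains terms whose naive contribution to $\rho^{1-2\gamma}\p_\rho\mathcal{U}_V$ would blow up unless $H=0$; precisely this cancellation is what the mean-curvature hypothesis delivers. All of these points are contained in \cite{CG}, so once they are in place the proposition follows at once.
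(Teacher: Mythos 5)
Your proposal is correct and takes essentially the same approach as the paper: reduce to the background metric via conformal covariance, then realize $P_\gamma^{g_0}$ through the extension on $X$ and read off symmetry from Green's identity. The only cosmetic difference is that you work with the extension carrying a potential term $E(\rho)$ in the interior equation, whereas the paper uses the ``improved'' extension of \cite{CG}, which is pure divergence with no potential at the cost of the trace giving $P_\gamma^{g_0}-R_{g_0}$ rather than $P_\gamma^{g_0}$ itself (the difference $\int_M R_{g_0} v w\,d\mu_0$ being trivially symmetric and hence harmless). Both versions yield a manifestly symmetric bilinear form on $X$, so the conclusion is the same.
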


\begin{proof}
We recall the ``improved'' extension \cite{CG} for $\Pgo-R_{g_0}$ without the zeroth order term, namely 
\begin{equation}\label{eq:extension}\begin{cases}
-\Div(\rho^{1-2\gamma}\nabla{W})
    =0
    &\textin(X,\bar{g}_0),\\
W=w
    &\texton(M,g_0),\\
(P_{\gamma}^{g_0}-R_{g_0})w=-c_\gamma\lim\limits_{\rho\to0}\rho^{1-2\gamma}\p_{\rho}W
    &\texton(M,g_0).
\end{cases}\end{equation}
where $c_\gamma$ is a positive constant (which can be found \cite{CG}).

First we prove that
\[\int_{M}P_{\gamma}^{g_0}(v)w\,d\mu_0=\int_{M}P_{\gamma}^{g_0}(w)v\,d\mu_0.\]
Indeed, denoting $V$ and $W$ to be the extension of $v$ and $w$ respectively, we have
\[\begin{split}
\int_{M}\left(P_{\gamma}^{g_0}(v)w-P_{\gamma}^{g_0}(w)v\right)\,d\mu_0
&=\int_{M}\left((P_{\gamma}^{g_0}-R)(v)w-(P_{\gamma}^{g_0}-R)(w)v\right)\,d\mu_0\\
&=c_{\gamma}\lim_{\rho\to0}\int_{M_\rho}\rho^{1-2\gamma}\left(W\p_{\rho}V-V\p_{\rho}W\right)\,d\bar{\mu}_0\\
&=c_{\gamma}\int_{X}\Div\left(\rho^{1-2\gamma}\left(W\nabla{V}-V\nabla{W}\right)\right)\,d\bar{\mu}_0\\
&=0.
\end{split}\]
Here $M_\rho$ denotes the level set at level $\rho$. For a conformal metric $g=u^{\frac{4}{n-2\gamma}}g_0$, we have
\[\int_{M}P_{\gamma}^{g}(v)w\,d\mu=\int_{M}u^{-\frac{n+2\gamma}{n-2\gamma}}P_{\gamma}^{g_0}(uv)wu^{\frac{2n}{n-2\gamma}}\,d\mu_0=\int_{M}P_{\gamma}^{g_0}(uv)uw\,d\mu_0.\]
Hence the result follows.
\end{proof}

We now compute crucial quantities involving the time-derivatives of $R$ and $s$. These computations can be justified by a standard approximation argument. Hereafter we also write $R(t)=R_\gamma^{g(t)}$, etc.

\begin{lem}\label{lem:Rt}
We have
\begin{enumerate}
\item $\p_tR(t)=\frac{n+2\gamma}{n-2\gamma}R(R-s)-P_{\gamma}^{g}(R-s)=-(P_{\gamma}^{g}-R)(R)+\frac{4\gamma}{n-2\gamma}R(R-s)$.
\item $\p_ts(t)=-2\int_{M}|R-s|^2\,d\mu$.
\end{enumerate}
\end{lem}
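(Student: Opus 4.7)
The plan is to work directly from the identity $P_\gamma^{g_0}(u)=Ru^{\frac{n+2\gamma}{n-2\gamma}}$ (which is just the definition $R=u^{-\frac{n+2\gamma}{n-2\gamma}}P_\gamma^{g_0}(u)$) together with the flow $\partial_t u=(s-R)u$ and the conformal covariance \eqref{eq:conformalP}. All of $P_\gamma^{g_0}$, the defining function, etc., are time-independent, so the only non-trivial time derivatives come from $u$, $R$ and $s$.

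For part (1), I will differentiate $Ru^{p}=P_\gamma^{g_0}(u)$ with $p=\frac{n+2\gamma}{n-2\gamma}$ in $t$, obtaining
\[
(\p_tR)u^{p}+pRu^{p-1}\p_tu=P_\gamma^{g_0}(\p_tu).
\]
Substituting $\p_tu=(s-R)u$ and using conformal covariance $P_\gamma^{g_0}((s-R)u)=u^{p}P_\gamma^{g}(s-R)$, then dividing by $u^{p}$, yields
\[
\p_tR=P_\gamma^{g}(s-R)-pR(s-R)=\tfrac{n+2\gamma}{n-2\gamma}R(R-s)-P_\gamma^{g}(R-s).
\]
For the second form, I use that $s$ is a constant in space, so $P_\gamma^g(s)=sP_\gamma^g(1)=sR$, hence $P_\gamma^g(R-s)=P_\gamma^g(R)-sR$. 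Combining gives
\[
\p_tR=-(P_\gamma^g-R)(R)+\Big(\tfrac{n+2\gamma}{n-2\gamma}-1\Big)R(R-s)=-(P_\gamma^g-R)(R)+\tfrac{4\gamma}{n-2\gamma}R(R-s),
\]
using $\frac{n+2\gamma}{n-2\gamma}R(R-s)+sR=R^2+\frac{4\gamma}{n-2\gamma}R(R-s)$.

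For part (2), I rewrite $s$ in a more convenient form. Since $p+1=\frac{2n}{n-2\gamma}$, the defining relation gives $uP_\gamma^{g_0}(u)=Ru^{p+1}$, so
\[
s=\int_M R\,d\mu=\int_M Ru^{\frac{2n}{n-2\gamma}}\,d\mu_0=\int_M uP_\gamma^{g_0}(u)\,d\mu_0.
\]
Differentiating and invoking the self-adjointness of $P_\gamma^{g_0}$ from Proposition \ref{lem:byparts} gives
\[
\p_ts=2\int_M P_\gamma^{g_0}(u)\,\p_tu\,d\mu_0=2\int_M Ru^{p}(s-R)u\,d\mu_0=2\int_M R(s-R)\,d\mu.
\]
Finally, since volume is preserved ($\mu(M)=1$) and $\int_M(R-s)\,d\mu=s-s=0$, I can add the vanishing term $-2s\int_M(R-s)\,d\mu$ to obtain
\[
\p_ts=-2\int_M(R-s)R\,d\mu+2s\int_M(R-s)\,d\mu\cdot 0\text{-trick}=-2\int_M(R-s)^2\,d\mu.
\]
There is no substantial obstacle: the only subtlety is making sure every application of $P_\gamma^{g_0}$ and of integration by parts is justified, which the paper already points out is handled by a standard approximation argument (the hypothesis $H=0$ when $\gamma\in(\tfrac12,1)$ is needed exactly to apply Proposition \ref{lem:byparts} in part (2)). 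The rest is algebra driven by the two structural identities $P_\gamma^{g_0}(u)=Ru^{p}$ and $P_\gamma^{g_0}(uf)=u^{p}P_\gamma^{g}(f)$.
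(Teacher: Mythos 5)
Your proof is correct. For part (1) you take essentially the same route as the paper: both proofs differentiate the defining relation $P_\gamma^{g_0}(u)=Ru^{\frac{n+2\gamma}{n-2\gamma}}$ (the paper writes it as differentiating $R=u^{-p}P_\gamma^{g_0}(u)$, which is the same calculation), plug in the flow, and use conformal covariance; your algebraic verification of the second form via $P_\gamma^g(s)=sR$ checks out. For part (2) you take a mildly different but equally valid route: you rewrite $s=\int_M uP_\gamma^{g_0}(u)\,d\mu_0$ and differentiate this symmetric quadratic form, using self-adjointness to pick up the factor of $2$ and then completing the square with the vanishing integral $\int_M(R-s)\,d\mu=0$. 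The paper instead differentiates $s=\int_M R\,d\mu$ directly, which requires invoking the formula for $\p_tR$ from part (1) plus a separate application of integration by parts to evaluate $\int_M P_\gamma^g(R-s)\,d\mu=\int_M(R-s)R\,d\mu$. Your version has the small advantage that part (2) no longer depends on part (1), at the cost of the slightly less transparent step of recognizing $s$ as the energy form; both appeal to Proposition~\ref{lem:byparts} in the same essential way. The ``$\cdot\,0$-trick'' display is notation rather than mathematics, but the intended manipulation is clear and correct.
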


\begin{proof}
\begin{enumerate}
\item 
Using the definition of the flow, we have
\[\begin{split}
\p_{t}R(t)
&=-\dfrac{n+2\gamma}{n-2\gamma}\dfrac{u_t}{u}R
    +u^{-\frac{n+2\gamma}{n-2\gamma}}P_{\gamma}^{g_0}\left(\dfrac{u_t}{u}u\right)\\
&=\dfrac{n+2\gamma}{n-2\gamma}R(R-s)-P_{\gamma}^{g}(R-s).
\end{split}\]
\item 
Similarly we compute, using additionally Lemma \ref{lem:byparts},
\[\begin{split}
\p_ts(t)
&=\p_t\int_{M}R\,d\mu\\
&=\int_{M}R_t\,d\mu+\dfrac{2n}{n-2\gamma}\int_{M}R\dfrac{u_t}{u}\,d\mu\\
&=\dfrac{n+2\gamma}{n-2\gamma}\int_{M}R(R-s)\,d\mu
    -\int_{M}P_\gamma^{g}(R-s)\,d\mu\\
&\quad\;
    -\dfrac{2n}{n-2\gamma}\int_{M}R(R-s)\,d\mu\\
&=\left(\dfrac{n+2\gamma}{n-2\gamma}-1-\dfrac{2n}{n-2\gamma}\right)\int_{M}(R-s)^2\,d\mu\\
&=-2\int_{M}(R-s)^2\,d\mu.
\end{split}\]
\end{enumerate}
This completes the proof.
\end{proof}

Next we show that $R(t)\geq0$ for all $t$ provided that $R(0)>0$. Quantitatively we have

\begin{lem}
For any $t\geq0$, we have
\[R(t)\geq{e}^{-\frac{4\gamma}{n-2\gamma}s(0)t}\min_{M}{R}(0)>0.\]
\end{lem}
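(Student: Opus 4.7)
I would prove the bound by a parabolic comparison between $R$ and the spatially constant lower barrier
\[
m(t) := \left(\min_M R(0)\right) e^{-\frac{4\gamma}{n-2\gamma} s(0) t},
\]
which satisfies $\dot m = -\frac{4\gamma}{n-2\gamma} s(0)\, m$. The relevant evolution equation, from Lemma \ref{lem:Rt}(1), is
\[
\p_t R = -(P_\gamma^g - R)(R) + \frac{4\gamma}{n-2\gamma} R(R - s),
\]
and by Lemma \ref{lem:Rt}(2), $s$ is non-increasing so that $s(t) \leq s(0)$.

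\textbf{Step 1: minimum principle for $P_\gamma^g - R_g$.} The analytic input I need is: whenever $f \in C^\infty(M)$ attains its minimum at $x_0 \in M$,
\[
(P_\gamma^g - R_g)(f)(x_0) \leq 0.
\]
By conformal covariance \eqref{eq:conformalP} together with $R_g = u^{-\frac{n+2\gamma}{n-2\gamma}}P_\gamma^{g_0}(u)$, the left-hand side equals, at $x_0$,
\[
u(x_0)^{-\frac{n+2\gamma}{n-2\gamma}}\, P_\gamma^{g_0}(h)(x_0), \qquad h := u(f - f(x_0)).
\]
Since $u > 0$ and $x_0$ is a minimum of $f$, $h \geq 0$ on $M$ with $h(x_0) = 0$. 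Its extension $H$ as in \eqref{eq:extension} satisfies $\Div(\rho^{1-2\gamma} \nabla H) = 0$ in $X$ with nonnegative boundary data, so by the weak maximum principle $H \geq 0$ in $\bar X$ and $(x_0, 0)$ is a boundary minimum. A Hopf-type lemma for the degenerate elliptic operator then yields $\liminf_{\rho \to 0^+} \rho^{1-2\gamma} \p_\rho H(x_0, \rho) \geq 0$; combined with \eqref{eq:extension} and $h(x_0) = 0$, this translates to $P_\gamma^{g_0}(h)(x_0) \leq 0$, and the claim follows.

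\textbf{Step 2: comparison argument.} For each $\epsilon > 0$, replace $m$ by $m_\epsilon(t) := (\min_M R(0) - \epsilon)\, e^{-\frac{4\gamma}{n-2\gamma} s(0) t}$, so that $R(\cdot, 0) > m_\epsilon(0)$ strictly. Were $R \geq m_\epsilon$ to fail, continuity and compactness would produce a first touching time $t_0 > 0$ and a point $x_0 \in M$ with $R(x_0, t_0) = m_\epsilon(t_0) > 0$, $R(\cdot, \tau) \geq m_\epsilon(\tau)$ for all $\tau \leq t_0$, and $\p_t R(x_0, t_0) \leq \dot m_\epsilon(t_0)$. Since $x_0$ is then a spatial minimum of $R(\cdot, t_0)$, Step 1 applied to $f = R(\cdot, t_0)$ gives $(P_\gamma^g - R)(R)(x_0, t_0) \leq 0$, so the evolution equation forces
\[
\p_t R(x_0, t_0) \geq \frac{4\gamma}{n-2\gamma} m_\epsilon(t_0)\bigl(m_\epsilon(t_0) - s(t_0)\bigr).
\]
Combining with $\dot m_\epsilon(t_0) = -\frac{4\gamma}{n-2\gamma} s(0) m_\epsilon(t_0)$ and dividing by $\frac{4\gamma}{n-2\gamma} m_\epsilon(t_0) > 0$ gives $m_\epsilon(t_0) \leq s(t_0) - s(0) \leq 0$, contradicting $m_\epsilon(t_0) > 0$. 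Hence $R \geq m_\epsilon$ for all $t$, and sending $\epsilon \to 0^+$ yields the claimed bound.

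\textbf{Main obstacle.} The only nonroutine ingredient is the minimum principle in Step 1, which ultimately rests on a Hopf lemma for the degenerate elliptic extension operator $\Div(\rho^{1-2\gamma} \nabla \cdot)$ appearing in \eqref{eq:extension}. Once that is in place, the remainder is a standard parabolic comparison, permissible under the smoothness of the flow assumed throughout the paper; the perturbation parameter $\epsilon$ avoids the degeneracy at $t = 0$ where $R(\cdot,0)$ already touches $m(0)$ at its minima.
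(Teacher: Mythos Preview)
Your argument is correct, but it proceeds by a genuinely different mechanism from the paper's. The paper works in the weak/energy framework: it rewrites the evolution for $U:=e^{\frac{4\gamma}{n-2\gamma}s(0)t}R$ as an extended equation on $X$, tests against $V:=\min\{U-\min_M U(0),0\}$, and shows $\frac{d}{dt}\int_M V^2\,d\mu\le 0$, a Stampacchia-type truncation that forces $V\equiv 0$. You instead run a classical pointwise comparison: a barrier $m_\epsilon$, a first touching point, and a Hopf-type sign for $(P_\gamma^g-R_g)$ at an interior minimum, derived from the same extension \eqref{eq:extension}. Both approaches ultimately rest on the extension structure; yours packages it as a pointwise minimum principle (Step~1), the paper's as an integrated inequality. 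Your route is perhaps more transparent but leans more heavily on the smoothness of the flow (needed for the touching-point argument and for $\p_t R$ to make sense pointwise) and on a Hopf lemma for the degenerate operator $\Div(\rho^{1-2\gamma}\nabla\,\cdot\,)$, which is available in the literature you cite but is an extra ingredient. The paper's energy argument is more robust at the level of weak solutions and avoids invoking Hopf explicitly. One small point: in Step~1 the extension \eqref{eq:extension} yields $(P_\gamma^{g_0}-R_{g_0})h$ rather than $P_\gamma^{g_0}h$ on the boundary, but since $h(x_0)=0$ these coincide at $x_0$, so your computation goes through.
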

\begin{proof}
The extension problem for Lemma \ref{lem:Rt}(1) reads, for $U|_{M}=e^{\frac{4\gamma t}{n-2\gamma}}R(t)$,
\[\begin{cases}
\Div(\rho^{1-2\gamma}U)=0
    &\textin(X,\bar{g})\\
-c_{\gamma}\lim\limits_{\rho\to0}\rho^{1-2\gamma}\p_{\rho}U=-\p_{t}U+\dfrac{4\gamma}{n-2\gamma}U(R-s+s(0))
    &\textin(M,g_0).
\end{cases}\]
Testing this with $V$ such that $V|_{M}=\min\set{U-\min_{M}U(0),0}$, we have, as long as $R(t)>0$,
\[\begin{split}
0&=\int_{X}-\Div(\rho^{1-2\gamma}U)V\,d\bar{\mu}\\
&=\int_{X}\rho^{1-2\gamma}\nabla{U}\cdot\nabla{V}\,d\bar{\mu}
    -\int_{M}(P_{\gamma}^{g_0}-R)(U)V\,d\mu\\
&=\int_{X}\rho^{1-2\gamma}|\nabla V|^2\,d\bar{\mu}
    +\dfrac12\p_{t}\int_{M}V^2\,d\mu
    -\dfrac{4\gamma}{n-2\gamma}\int_{M}U(R-s+s(0))V\,d\mu\\
&\geq\int_{X}\rho^{1-2\gamma}|\nabla V|^2\,d\bar{\mu}
    +\dfrac12\p_{t}\int_{M}V^2\,d\mu
    +\dfrac{4\gamma}{n-2\gamma}\int_{M}U|V|R\,d\mu\\
&\geq\dfrac12\p_{t}\int_{M}V^2\,d\mu
\end{split}\]
where we have used the facts that $V\leq0$ and $s\leq{s(0)}$. Integrating in $t$, we obtain $V\equiv0$ up to the time where $\min_{M}U\geq0$, so that
\[R(t)>e^{-\frac{4\gamma}{n-2\gamma}s(0)t}\min_{M}R(0),\]
as desired.
\end{proof}

\begin{prop}\label{prop:Brendle-2.4}
Given any $T>0$, we can find positive constants $C(T)$ such that
$$C(T)^{-1}\leq u(t)\leq C(T)$$
for all $0\leq t\leq T$.
\end{prop}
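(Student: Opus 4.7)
The proof splits naturally into an easy upper bound and a harder lower bound. For the upper bound, by the preceding lemma $R(t)>0$, and by Lemma~\ref{lem:Rt}(2) $s(t)\le s(0)$, so the flow equation gives $\p_t u=(s-R)u\le s(0)\,u$, and Gr\"onwall yields $u(t)\le e^{s(0)T}$ on $[0,T]$. Note that this step uses only positivity of $R$ together with the monotone decay of $s$.

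For the lower bound my plan is to reduce to an $L^\infty$ bound on the curvature $R$ on $M\times[0,T]$: once $R\le C(T)$, then $\p_t u=(s-R)u\ge -C(T)u$ gives $u(t)\ge e^{-C(T)T}$. The starting point is a maximum principle for $R$ via the extension \eqref{eq:extension}: at a spatial maximum $x_0(t)$ of $R(\cdot,t)$, the extension of $R-\max_M R(t)$ with respect to $g$ attains its maximum on the boundary, so the Hopf-type lemma for the degenerate operator $-\Div(\rho^{1-2\gamma}\nabla\cdot)$ forces $(\Pg-R)(R)(x_0)\ge 0$. Substituting into the second form of Lemma~\ref{lem:Rt}(1) yields
\[\frac{d}{dt}\max_M R(t)\le\frac{4\gamma}{n-2\gamma}\max_M R(t)\bigl(\max_M R(t)-s(t)\bigr),\]
a Riccati-type inequality that controls $\max_M R$ only on a short interval depending on $\max_M R(0)$.

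To upgrade this to a bound on all of $[0,T]$, I would run a Moser-type iteration on the equation $\p_t R+(\Pg-R)(R)=\frac{4\gamma}{n-2\gamma}R(R-s)$. Multiplying by $R^{p-1}$ and integrating against $d\mu$, a Stroock--Varopoulos inequality for $\Pg-R$ (derived from the extension \eqref{eq:extension} and the nonnegativity of $\rho^{1-2\gamma}|\nabla U|^2$) bounds the dissipation term below by a fractional Dirichlet energy of $R^{p/2}$; combined with the fractional Sobolev embedding $\|w\|_{L^{2n/(n-2\gamma)}(M)}^2\le C\int_M w\,\Pgo(w)\,d\mu_0+C\|w\|_{L^2(M)}^2$, this produces the self-improvement $L^p\to L^{p\sigma}$ with gain $\sigma=n/(n-2\gamma)>1$. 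Starting from the base estimates $\int_0^T\!\int_M(R-s)^2\,d\mu\,dt\le s(0)/2$ (from Lemma~\ref{lem:Rt}(2)) and the two-sided bound $0<Y_\gamma(M,[g])\le s(t)\le s(0)$, iteration in $p$ gives $\|R\|_{L^\infty(M\times[0,T])}\le C(T)$.

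The hardest part is the Stroock--Varopoulos step for the nonlocal operator $\Pg$: the pointwise algebraic identities available for the Laplacian must be replaced by manipulations on the extension $U$ of $R$, with careful treatment of the zeroth-order curvature term appearing in \eqref{eq:extension} so that the resulting iteration constants remain independent of $p$. Once this step is in place, the Moser scheme closes as usual and produces the claimed two-sided bound, with $C(T)$ depending on $T$, $s(0)$, $Y_\gamma(M,[g])$, and the background geometry.
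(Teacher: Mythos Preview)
Your upper bound is exactly the paper's argument. For the lower bound, however, the paper takes a far shorter route that bypasses any $L^\infty$ control of $R$. Since $R(t)\ge0$, the conformal factor satisfies
\[
P_\gamma^{g_0}u \;=\; R(t)\,u^{\frac{n+2\gamma}{n-2\gamma}}\;\ge\;0,
\]
so at each fixed time $u(\cdot,t)$ is a nonnegative supersolution of the fixed elliptic operator $P_\gamma^{g_0}$ on the compact manifold $(M,g_0)$. The paper then invokes the Harnack inequality of Cabr\'e--Sire \cite[Lemma~4.9]{Cabre2014} to obtain $\inf_M u(t)\ge c\,\sup_M u(t)$ with a constant depending only on the background geometry; combined with volume preservation (so $\sup_M u(t)\ge1$) this gives the lower bound immediately. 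No parabolic iteration on $R$ is needed.

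Your proposed route---bounding $\max_M R$ first via a Riccati inequality and then a Moser iteration on the evolution of $R$---is the analogue of what Brendle does in the local case, but here it is both unnecessary and not obviously complete as sketched. Two specific concerns: (i) the differential inequalities for $S_q(g)$ and $F_p(g)$ that the paper actually proves (Lemmas \ref{lem:Brendle-lem2.5} and \ref{lem:Fpto0}) give control only of $L^p$ norms of $R$ for $p<\tfrac{n}{2\gamma}+1$, and the passage from these subcritical estimates to an honest $L^\infty$ bound on $R$ at finite time is not automatic---the ``Moser scheme closes as usual'' step hides the issue of absorbing the quadratic reaction term $R(R-s)$ uniformly across the iteration; (ii) you correctly anticipate having to work with the evolving operator $\Pg$, and while the conformal invariance of $Y_\gamma(M,[g])$ does give a metric-independent Sobolev constant, the lower-order terms in the iteration still involve $d\mu=u^{\frac{2n}{n-2\gamma}}d\mu_0$, so some a~priori control of $u$ is already implicitly used. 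These difficulties are not insurmountable, but they make your plan substantially longer than the paper's one-line elliptic Harnack argument.
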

\begin{proof}
The function $u(t)$ satisfies
\begin{align}
\partial_t u=-(R-s)u\leq s(0)u
\end{align}
then $u(t)\leq e^{s(0)T}$ for $0\leq t\leq T$. Since $R(t)\geq 0$ for $0\leq t\leq T$, then
\begin{align}
P_{\gamma}^{g_0}u=R(t)u^{\frac{n+2\gamma}{n-2\gamma}}\geq 0.
\end{align}
It follows from \cite[Lemma 4.9]{Cabre2014} that $u$ satisfies a Harnack inequality such that
\[\inf_M u\geq C(T)\sup_M u,\]
for some $C(T)>0$. Then the proposition is proved.
\end{proof}

For $q\geq1$ consider the functionals
\begin{equation}\label{eq:Fdef}
S_q(g)=\int_{M}(R_\gamma^{g})^q\,d\mu_g,\quad F_q(g)=\int_{M}|R_\gamma^{g}-s_\gamma^{g}|^{q}\,d\mu_g.
\end{equation}
In particular $s_\gamma^g=S_1(g)$.

\begin{lem}\label{lem:Brendle-lem2.5}
For $1\leq{q}<\frac{n}{2\gamma}$, we have
\begin{equation}\label{eq:liminfFq}
F_{q+1}(g(t))\leq C(T,q,g_0),
\end{equation}
for all $0\leq t\leq T$. If the flow exists for all $t>0$, then
\begin{equation}
\liminf_{t\to\infty}F_{q+1}(g(t))=0.
\end{equation}
\end{lem}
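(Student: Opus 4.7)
The plan is to differentiate $F_{q+1}(g(t))$ directly in time and close a Gr\"{o}nwall-type inequality. Since $\partial_t u=(s-R)u$ implies $\partial_t\,d\mu=\frac{2n}{n-2\gamma}(s-R)\,d\mu$, and Lemma \ref{lem:Rt} gives $\partial_t(R-s)=\frac{n+2\gamma}{n-2\gamma}R(R-s)-P_\gamma^g(R-s)+2F_2(g)$, I expect the identity
\begin{align*}
\frac{d}{dt}F_{q+1}(g(t))
&=\tfrac{(q+1)(n+2\gamma)-2n}{n-2\gamma}\int_M R\,|R-s|^{q+1}\,d\mu+\tfrac{2ns}{n-2\gamma}F_{q+1}(g)\\
&\quad -(q+1)\int_M |R-s|^{q-1}(R-s)\,P_\gamma^g(R-s)\,d\mu\\
&\quad +2(q+1)F_2(g)\int_M |R-s|^{q-1}(R-s)\,d\mu.
\end{align*}

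The delicate step is the third line, for which I would establish a Stroock--Varopoulos-type inequality $\int_M|f|^{q-1}f\,P_\gamma^g(f)\,d\mu\geq 0$ with $f=R-s$. The natural route is to pass to the extension problem \eqref{eq:extension}, test the degenerate-elliptic equation against $|F|^{q-1}F$ (where $F$ is the harmonic extension of $f$), and exploit convexity of $\xi\mapsto|\xi|^{q+1}/(q+1)$ in the bulk; non-negativity of the boundary curvature contribution is ensured since $R\geq 0$ along the flow, and the conformal covariance \eqref{eq:conformalP} transfers the bound from $P_\gamma^{g_0}$ to $P_\gamma^g$. Once this non-negative term is discarded, the remaining terms are controlled using Proposition \ref{prop:Brendle-2.4} together with elliptic regularity for $R=u^{-(n+2\gamma)/(n-2\gamma)}P_\gamma^{g_0}(u)$, which yields a $T$-dependent $L^\infty$ bound on $R$ and hence on $s$ and $|R-s|$. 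Together with H\"older's inequality ($F_q\leq F_{q+1}^{q/(q+1)}$, using $\mu_g(M)=1$) and Young's inequality, this gives
\[\frac{d}{dt}F_{q+1}(g(t))\leq C(T,q,g_0)\bigl(F_{q+1}(g(t))+1\bigr),\]
from which \eqref{eq:liminfFq} follows by Gr\"{o}nwall.

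For the liminf assertion, integrating Lemma \ref{lem:Rt}(2) over $[0,\infty)$ and using $s(t)\geq 0$ (since $R(t)\geq 0$) gives $\int_0^\infty F_2(g(t))\,dt\leq s(0)/2<\infty$, so $\liminf_{t\to\infty}F_2(g(t))=0$. For $q>1$ I would upgrade this via the H\"older interpolation $F_{q+1}\leq\|R-s\|_{L^\infty}^{q-1}F_2$, provided a time-uniform $L^\infty$ bound on $R$ is available; such a bound should come from the positivity of $Y_\gamma(M,[g])$ together with the Sobolev-type inequality associated to $P_\gamma^{g_0}$ to control $u$ uniformly in time. Selecting a subsequence $t_k\to\infty$ along which $F_2(g(t_k))\to 0$ then yields $F_{q+1}(g(t_k))\to 0$.

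The main obstacle is the Stroock--Varopoulos step: for a non-local operator on a curved manifold, the extension-based proof must contend with the curvature term in \eqref{eq:extension} and, when $1<q<2$, with the limited regularity of the test function $|F|^{q-1}F$, which requires a regularization argument. A secondary subtlety is the time-uniform $L^\infty$ bound needed for the liminf step, which is not implied by the $T$-dependent estimates of Proposition \ref{prop:Brendle-2.4} alone but must be drawn from the underlying Yamabe-positive structure.
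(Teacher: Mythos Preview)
Your approach has a genuine gap in the liminf step. You need a time-uniform $L^\infty$ bound on $R$ to pass from $\liminf F_2=0$ to $\liminf F_{q+1}=0$, but no such bound is available at this point in the argument: the uniform control on $u$ (and hence on $R$) is only obtained later in the paper via Moser iteration, and that step itself relies on the present lemma through Lemma~\ref{lem:Fpto0} and Proposition~\ref{prop:3.6}. Invoking ``the Yamabe-positive structure'' does not help---the Sobolev inequality coming from $Y_\gamma(M,[g])>0$ controls only $L^{2n/(n-2\gamma)}$, not $L^\infty$. So the argument is circular.

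The paper avoids this entirely by differentiating the simpler functional $S_q(g)=\int_M R^q\,d\mu$ rather than $F_{q+1}$. Two things are gained. First, since $R\geq0$ along the flow, the test function $R^{q-1}$ has no sign or regularity issues, and the extension argument gives $\int_M (P_\gamma^{g}-R)(R)\,R^{q-1}\,d\mu\geq0$ cleanly---your version with $f=R-s$ of indefinite sign is genuinely harder, as you note. Second, one obtains directly
\[
\partial_t S_q\leq -\frac{2(n-2\gamma q)}{n-2\gamma}\int_M R^q(R-s)\,d\mu\leq -\frac{2(n-2\gamma q)}{n-2\gamma}\,F_{q+1},
\]
using the elementary inequality $(R^q-s^q)(R-s)\geq|R-s|^{q+1}$ for $R,s\geq0$ and $q\geq1$. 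Integrating in $t$ gives $\int_0^\infty F_{q+1}(g(t))\,dt<\infty$ with no $L^\infty$ input whatsoever, and the liminf follows. Your Gr\"onwall route for the finite-time bound is workable under the paper's implicit smoothness assumption, but even there the paper's monotonicity of $S_q$ is cleaner and avoids the superlinear term $F_2\cdot F_q$ that forces you to feed an $L^\infty$ bound back in.
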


\begin{proof}
We compute, for $q\in[1,\frac{n}{2\gamma})$,
\begin{equation}\label{eq:dt-Sq}\begin{split}
&\quad\,\p_tS_q(g)\\
&=\int_{M}qR^{q-1}R_t\,d\mu+\dfrac{2n}{n-2\gamma}\int_{M}R^{q}\dfrac{u_t}{u}\,d\mu\\
&=-q\int_{M}(P_{\gamma}^{g_0}-R)(R-s)R^{q-1}\,d\mu
    +q\dfrac{4\gamma}{n-2\gamma}\int_{M}R(R-s)R^{q-1}\,d\mu
    \\
    &\qquad -\dfrac{2n}{n-2\gamma}\int_{M}R^q(R-s)\,d\mu\\
&=-q\int_{M}(P_{\gamma}^{g_0}-R)(R)R^{q-1}\,d\mu
    +\dfrac{2(2\gamma{q}-n)}{n-2\gamma}\int_{M}R^q(R-s)\,d\mu\\
&\leq-\dfrac{2(n-2\gamma{q})}{n-2\gamma}\int_{M}R^q(R-s)\,d\mu\leq0,\\
\end{split}\end{equation}
the last inequality following from the extension problem \eqref{eq:extension}. Indeed, if $U$ is the extension for $R$, then
\[\begin{split}
\int_{M}(P_{\gamma}^{g_0}-R)(R)R^{q-1}\,d\mu
&=-c_\gamma\lim\limits_{\rho\to0}\rho^{1-2\gamma}
    \int_{M}\p_{\rho}U R^{q-1}\,d\bar{\mu}\\
&=c_{\gamma}\int_{X}\rho^{1-2\gamma}
    \nabla{U}\cdot\nabla\left(U^{q-1}\right)\,d\bar{\mu}\\
&=\dfrac{4(q-1)}{q^2}c_{\gamma}
    \int_{X}\rho^{1-2\gamma}\left|\nabla(U^{\frac{q}{2}})\right|^2\,d\bar{\mu}\\
&\geq0.
\end{split}\]
Integrating \eqref{eq:dt-Sq}, we have
\begin{equation*}\begin{split}
\int_{0}^{\infty}F_{q+1}(g(t))\,dt
&=\int_0^{\infty}\int_{M}|R-s|^{q+1}\,d{\mu}dt\\
&\leq\int_0^{\infty}\int_{M}(R^q-s^q)(R-s)\,d{\mu}dt\\
&\leq\dfrac{n-2\gamma}{2(n-2\gamma{q})}S_q(g_0).
\end{split}\end{equation*}
In particular,
\[\liminf_{t\to\infty}F_{q+1}(g(t))=0.\]
\end{proof}

\section{Long time existence and convergence}
The short time and long time existence of $u$ has been studied by \cite{Daskalopoulos2017}. One can use the method in \cite{Athanasopoulos2010} to show that for any $T>0$, $u\in C^\alpha((0,T]\times M)$ for some $\alpha$. Here we are providing a proof follows from Brendle's approach \cite{brendle1}.

\begin{prop}
For any fixed $\frac{n}{2\gamma}<p<\frac{n+2\gamma}{2\gamma}$, let $\alpha=2\gamma-\frac{n}{p}>0$. Then for any $T>0$, there exists a constant $C(T)$ such that
\begin{align*}
|u(x_1,t_1)-u(x_2,t_2)|\leq C(T)((t_1-t_2)^{\frac{\alpha}{2}}+d(x_1,x_2)^\alpha).
\end{align*}
\end{prop}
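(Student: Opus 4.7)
The plan is to recast the flow as a fractional parabolic equation with an $L^p$ source, and then invoke parabolic regularity through the Caffarelli--Silvestre type extension.

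First, summing the identity $P_\gamma^{g_0}(u)=R u^{\frac{n+2\gamma}{n-2\gamma}}$ with the flow equation $\partial_t u=(s-R)u$, I would write
\[
\partial_t u+P_\gamma^{g_0}(u)=s\,u+R\,u\bigl(u^{\frac{4\gamma}{n-2\gamma}}-1\bigr)=:h(x,t).
\]
By Proposition \ref{prop:Brendle-2.4}, $u$ admits two-sided bounds $C(T)^{-1}\le u\le C(T)$ on $M\times[0,T]$, hence $|h|\le C(T)(1+|R|)$. Thus the task reduces to controlling $R$ in $L^p(M)$ uniformly in $t$.

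Second, I would upgrade Lemma \ref{lem:Brendle-lem2.5}: the proof there actually shows $S_q(g(t))\le S_q(g_0)$ for every $q\in[1,n/(2\gamma))$, so $\|R(\cdot,t)\|_{L^{q+1}(M)}$ is uniformly bounded via $F_{q+1}$ (with $s$ controlled by $s(0)$). Choosing any $p\in(n/(2\gamma),(n+2\gamma)/(2\gamma))$ and setting $q=p-1$ places $R$, and therefore $h$, in $L^\infty([0,T];L^p(M))$.

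Third, to convert $L^p$-integrability of $h$ into Hölder regularity of $u$, I would lift to the extension $U$ associated to \eqref{eq:extension}: at each time $U(\cdot,t)$ satisfies $-\operatorname{div}(\rho^{1-2\gamma}\nabla U)=0$ in $X$ with the parabolic boundary condition
\[
-c_\gamma\lim_{\rho\to 0}\rho^{1-2\gamma}\partial_\rho U=h-\partial_t U-R_{g_0}\,U\qquad\text{on }M,
\]
whose right-hand side lies in $L^\infty_tL^p_x$. After localizing by a partition of unity and flattening $M$, the problem falls within the scope of the parabolic De Giorgi--Nash--Moser theory for degenerate equations with $A_2$ weights developed by Athanasopoulos and Caffarelli \cite{Athanasopoulos2010}. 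A parabolic Morrey-type embedding, valid precisely because $p>n/(2\gamma)$, then yields Hölder continuity of $U$ with exponent $\alpha=2\gamma-n/p$ in the spatial variables and $\alpha/2$ in time, the latter exponent coming from the standard parabolic scaling $t\sim\rho^{2}$ of the (second-order) extended equation. Restricting to $M=\{\rho=0\}$ gives the desired estimate on $u$.

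The main obstacle is the last step: one must carefully set up the $L^p$-to-$C^\alpha$ parabolic embedding in the degenerate setting on a manifold, matching the correct $A_2$ weight $\rho^{1-2\gamma}$, and verify that all estimates depend only on the structural constants of $(X,\bar g_0)$ and $T$. An alternative route that avoids the extension is to apply Duhamel's formula directly with the semigroup $e^{-tP_\gamma^{g_0}}$ and use off-diagonal kernel bounds of fractional heat type on $M$ to obtain the same Hölder estimate; both approaches hinge on the same Morrey-type embedding at $p>n/(2\gamma)$.
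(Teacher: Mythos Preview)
Your approach is viable but takes a genuinely different, and considerably heavier, route than the paper. The paper \emph{decouples} space and time. For the spatial part it observes directly that $P_\gamma^{g_0}u=R\,u^{\frac{n+2\gamma}{n-2\gamma}}$ together with Lemma~\ref{lem:Brendle-lem2.5} and Proposition~\ref{prop:Brendle-2.4} gives $\|P_\gamma^{g_0}u(t)\|_{L^p}\le C(T)$, and then quotes an elliptic Sobolev-type embedding (Grubb) to obtain $|u(x,t)-u(y,t)|\le C(T)d(x,y)^\alpha$ at each fixed $t$. For the temporal part it uses only that $\partial_t u=(s-R)u$, hence $\|\partial_t u(t)\|_{L^p}\le C(T)$, and then a short averaging trick over balls of radius $\sqrt{t_1-t_2}$: replace $u(x,t_i)$ by its average, control the error with the spatial H\"older bound, and control the averaged difference by $\int|\partial_t u|$ and H\"older. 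No parabolic machinery, no extension, no semigroup estimates are needed. Your route via the extension and degenerate parabolic De Giorgi--Nash--Moser theory (or Duhamel with kernel bounds) would also work, but it imports a substantial black box where the paper gets by with two lines of elliptic regularity and a ball-averaging computation.

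One correction: your justification for the time exponent $\alpha/2$ via ``the standard parabolic scaling $t\sim\rho^{2}$ of the (second-order) extended equation'' is not right. The extended equation is elliptic in the interior; time enters only through the Neumann-type boundary condition, and the invariant scaling of $\partial_t U=-c_\gamma\lim_{\rho\to0}\rho^{1-2\gamma}\partial_\rho U$ is $t\sim\rho^{2\gamma}$, i.e.\ the fractional parabolic scaling. This actually yields the \emph{better} time exponent $\alpha/(2\gamma)\ge\alpha/2$ (since $\gamma<1$), so your conclusion survives, but the reasoning should be adjusted. The paper's averaging argument, by contrast, produces $\alpha/2$ directly because the ball radius $\sqrt{t_1-t_2}$ is chosen so that the spatial H\"older error matches the statement, not because of any intrinsic scaling.
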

\begin{proof}
Using Lemma \ref{lem:Brendle-lem2.5} and Proposition \ref{prop:Brendle-2.4}, for $\frac{n}{2\gamma}<p<\frac{n+2\gamma}{2\gamma}$,
\begin{align}\label{eq:W2p}
\int_M |P_\gamma^{g_0}u(t)|^pd\mu\leq C(T)
\end{align}
for all $0\leq t\leq T$ and
\begin{align}\label{eq:tup}
\int_M |\partial_t u|^pd\mu\leq C(T).
\end{align}
By \cite[Theorem 4]{Grubb2015}, the inequality \eqref{eq:W2p} implies that
\[|u(x,t)-u(y,t)|\leq C(T)d(x,y)^\alpha\]
where $\alpha=2\gamma-\frac{n}{p}$ and $t\in [0,T]$. Using \eqref{eq:tup}, we obtain
\begin{align*}
&|u(x,t_1)-u(x,t_2)|\\
\leq &C(t_1-t_2)^{-\frac{n}{2}}\int_{B_{\sqrt{t_1-t_2}}(x)}|u(x,t_1)-u(x,t_2)|\,d\mu_0(y)\\
\leq &C(t_1-t_2)^{-\frac{n}{2}}\int_{B_{\sqrt{t_1-t_2}}(x)}|u(y,t_1)-u(y,t_2)|\,d\mu_0(y)
    +C(T)(t_1-t_2)^{\frac{\alpha}{2}}\\
\leq &C(t_1-t_2)^{-\frac{n-2}{2}}\sup_{t\in [t_1,t_2]}\int_{B_{\sqrt{t_1-t_2}}(x)}|\partial_tu|\,d\mu_{0}(y)
    +C(T)(t_1-t_2)^{\frac{\alpha}{2}}\\
\leq &C(t_1-t_2)^{1-\frac{n}{2p}}\sup_{t\in [t_1,t_2]}\left(\int_{M}|\partial_tu|^p\,d\mu_{0}(y)\right)^{\frac{1}{p}}+C(T)(t_1-t_2)^{\frac{\alpha}{2}}\\
\leq& C(T)(t_1-t_2)^{\gamma-\frac{n}{2p}}
\end{align*}
for all $x\in M$ and $t_1,t_2\in [0,T]$ satisfying $0<t_1-t_2<1$. Thus the assertion is proved.
\end{proof}

Now we show that the convergence is uniform.

\begin{lem}\label{lem:Fpto0}
For any $p\in[1,\frac{n+2\gamma}{2\gamma})$, $F_p(g(t))\to0$ as $t\to+\infty$.
\end{lem}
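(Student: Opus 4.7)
The plan is to strengthen the $\liminf$ statement of Lemma \ref{lem:Brendle-lem2.5} to a genuine limit, and then to reach every subcritical $p$ by interpolation. Since $\mu_g(M)=1$, H\"older's inequality gives $F_p^{1/p}\le F_{p'}^{1/p'}$ whenever $1\le p\le p'$, so it suffices to show $F_{q+1}(g(t))\to 0$ for a single $q\in[1,n/(2\gamma))$ arbitrarily close to $n/(2\gamma)$. The calculation in the proof of Lemma \ref{lem:Brendle-lem2.5} in fact provides two stronger facts than the stated $\liminf$: the monotonicity $\p_t S_q\le 0$, and hence the uniform-in-time bound $S_q(g(t))\le S_q(g_0)$ (equivalently, a uniform $L^q(M,d\mu_g)$ bound on $R$); and the finite integral $\int_0^\infty F_{q+1}(g(t))\,dt\le\frac{(n-2\gamma)S_q(g_0)}{2(n-2\gamma q)}$.

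I would then show that $t\mapsto F_{q+1}(g(t))$ is uniformly continuous on $[0,\infty)$. Differentiating with Lemma \ref{lem:Rt}(1) together with $\p_t\,d\mu_g=-\frac{2n}{n-2\gamma}(R-s)\,d\mu_g$ yields
\[
\p_t F_{q+1} = (q+1)\int_M|R-s|^{q-1}(R-s)\Big(\frac{n+2\gamma}{n-2\gamma}R(R-s)-P_\gamma^g(R-s)+2F_2\Big)d\mu - \frac{2n}{n-2\gamma}\int_M(R-s)|R-s|^{q+1}\,d\mu.
\]
The purely local pieces are handled by H\"older's inequality together with the uniform $L^q$ bound on $R$ from the previous paragraph and the monotonicity $s(t)\le s_0$. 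The nonlocal contribution $\int_M|R-s|^{q-1}(R-s)\,P_\gamma^g(R-s)\,d\mu$ is the genuine difficulty: by the symmetry of $P_\gamma^g$ (Proposition \ref{lem:byparts}) together with the extension characterisation \eqref{eq:extension}, it turns into a non-negative Dirichlet form on $(X,\bar g_0)$ plus a lower-order curvature correction, which can in turn be absorbed in terms involving functionals of $R-s$ that are themselves in $L^1([0,\infty))$. The outcome is an estimate of the form $|\p_t F_{q+1}(g(t))|\le \phi(t)$ with $\phi\in L^1([0,\infty))$ (at the price of choosing the initial $q$ slightly smaller if needed in the interpolation step).

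The conclusion then follows from the standard fact that a non-negative, uniformly continuous function on $[0,\infty)$ lying in $L^1([0,\infty))$ must tend to zero at infinity; applied to $F_{q+1}$ this gives $F_{q+1}(g(t))\to 0$, and the interpolation of the first paragraph propagates the conclusion to every $p\in[1,(n+2\gamma)/(2\gamma))$. The main obstacle is the uniform continuity step, and more precisely the control of the nonlocal term, which is exactly what forces the appeal to both the self-adjointness of $P_\gamma^g$ and the extension formulation; the uniform $L^q$ bound $S_q(g(t))\le S_q(g_0)$ is the other ingredient without which none of these estimates close.
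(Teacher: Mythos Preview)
Your argument has a real gap at exactly the point you flag as ``the genuine difficulty''. You claim an estimate $|\p_t F_{q+1}(g(t))|\le \phi(t)$ with $\phi\in L^1$, but the extension computation only tells you that $\int_M|R-s|^{q-1}(R-s)\,P_\gamma^g(R-s)\,d\mu$ equals a \emph{nonnegative} weighted Dirichlet integral on $X$ plus a lower--order piece; it gives no upper bound on that Dirichlet integral. Since this term enters $\p_t F_{q+1}$ with a minus sign, you can at best hope for a one--sided bound $\p_t F_{q+1}\le \phi$, not a two--sided one. (Incidentally, even a genuine bound $|\p_t F|\le\phi\in L^1$ would not yield uniform continuity; it gives bounded variation, which would still let you conclude, but that is not what you wrote.) Moreover, once you simply discard the Dirichlet form by sign, the surviving ``purely local'' terms contain $\int_M R\,|R-s|^{q+1}\,d\mu$, hence $F_{q+2}$, and your only a priori time--integrability is $\int_0^\infty F_{p}\,dt<\infty$ for $p<\frac{n+2\gamma}{2\gamma}$. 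This forces $q+2<\frac{n+2\gamma}{2\gamma}$, i.e.\ $q+1<\frac{n}{2\gamma}$, so your interpolation in the first paragraph only reaches exponents $p<\frac{n}{2\gamma}$, strictly short of the claimed range $p<\frac{n+2\gamma}{2\gamma}$.

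The paper avoids both problems by \emph{using} the nonlocal term rather than dropping it. Via the Stroock--Varopoulos inequality and the Sobolev inequality for $P_\gamma^g$, the Dirichlet form is bounded from below by $c\,F_{p^*}(g)^{\frac{n-2\gamma}{n}}$ with $p^*=\frac{np}{n-2\gamma}$, so that in $\p_t F_p$ one gets a genuinely negative contribution $-c\,F_{p^*}^{\frac{n-2\gamma}{n}}$. Interpolating $F_{p+1}\le \delta F_{p^*}^{\frac{n-2\gamma}{n}}+C(\delta)F_p^{1+\beta}$ then lets this negative term absorb all the $F_{p+1}$ (and $F_2F_{p-1}$) pieces, and one is left with the closed differential inequality $\p_t F_p\le CF_p+CF_p^{1+\beta}$ for every $p\in\bigl[1,\frac{n+2\gamma}{2\gamma}\bigr)$. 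Combined with $\liminf_{t\to\infty}F_p=0$ (equivalently, $\int_0^\infty F_p\,dt<\infty$) from Lemma~\ref{lem:Brendle-lem2.5}, a standard ODE argument gives $F_p(g(t))\to 0$ in the full stated range.
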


\begin{proof}
We use the notation $z^p=|z|^{p-1}z$. Using the Stroock--Varopoulos inequality
\[\int_{M}f^{p-1}\Pg{f}\,d\mu
    \geq\dfrac{4(p-1)}{p^2}\int_{M}|f|^{\frac{p}{2}}\Pg\left(|f|^{\frac{p}{2}}\right)\,d\mu\]
together with the Sobolev inequality
\[
0<Y_\gamma(M,[g])=\inf_{0\neq{f}\in{C}^{\infty}(M)}
    \dfrac{\int_{M}f\Pg{f}\,d\mu}
        {\left(\int_{M}|f|^{\frac{2n}{n-2\gamma}}\,d\mu\right)^{\frac{n-2\gamma}{n}}},
\]
we compute
\[\begin{split}
\p_{t}F_p(g)
&=-p\int_{M}(R-s)^{p-1}\Pg(R-s)\,d\mu
    +p\dfrac{n+2\gamma}{n-2\gamma}\int_{M}|R-s|^{p}\,d\mu\\
&\quad\,+2ps_t\int_{M}(R-s)^{p-1}\,d\mu
    +\dfrac{2n}{n-2\gamma}\int_{M}|R-s|^{p}\dfrac{u_t}{u}\,d\mu\\
&\leq-\dfrac{4(p-1)}{pY_\gamma(M,[g])}F_{p^*}(g)^{\frac{n-2\gamma}{n}}
    +\dfrac{p(n+2\gamma)-2n}{n-2\gamma}F_{p+1}\\
&\quad\,
    +\dfrac{p(n+2\gamma)}{n-2\gamma}sF_p
    +2pF_2(g)F_{p-1}(g)
\end{split}\]
where we denote $p^*=\frac{np}{n-2\gamma}$. Using H\"{o}lder's inequality with the conjugate exponents $\theta=\frac{n-2\gamma}{2\gamma{p}}$ and $1-\theta=\frac{2\gamma(p+1)-n}{2\gamma{p}}$ such that $p+1=\theta{p^*}+(1-\theta)p$,
and Young's inequality with $\alpha=\frac{n}{2\gamma{p}}<1$, we have
\[\begin{split}
F_2(g)F_{p-1}(g)
    &\leq{F}_{p+1}(g)\\
F_{p+1}(g)
    &\leq{F}_{p^*}(g)^{\frac{n-2\gamma}{2\gamma{p}}}
        F_{p}(g)^{\frac{2\gamma(p+1)-n}{2\gamma{p}}}\\
    &\leq\delta{F}_{p^*}^{\frac{n-2\gamma}{n}}
        +C(\delta)F_{p}(g)^{1+\frac{2\gamma}{2\gamma{p}-n}}
\end{split}\]
for any $\delta>0$. Combining with the above estimates, we have
\[\begin{split}
\p_{t}F_{p}(g)
&\leq{C}F_{p}(g)+CF_{p}(g)^{1+\beta},
\end{split}\]
with $\beta=\frac{1}{p(1-\alpha)}\frac{2\gamma}{2\gamma{p}-n}>0$. Recalling \eqref{eq:liminfFq}, standard ODE analysis implies that
\[\lim_{t\to\infty}F_p(g(t))=0.\]

\end{proof}

Now we have proved $u(t)$ exists for $(0,\infty)$ and it is h\"{o}lder in space and time for any finite time interval. We want to study the convergence of $u(t)$.

Let $r_0>0$ denote a lower bound for the injectivity radius on $(M,g_0)$. Fix $\varphi\in{C}_c^{\infty}(B_{r_0}(0))$ such that $\varphi=1$ on $B_{r_0/2}\subset\R^n$. For $x,y\in{M}$, let $\varphi_y(x)=\varphi(\exp_y^{-1}(x))$, where $\exp$ is the exponential map in the metric $g_0$. Let us also denote, for functions $u$ and $\bar{u}$ defined on $M$ and $\R^n$ respectively,
\[\vol(u)=\mu(M)=\int_{M}u^{\frac{2n}{n-2\gamma}}\,d\mu_0,\qquad
    \volbar(\bar{u})=\int_{\R^n}\bar{u}^{\frac{2n}{n-2\gamma}}\,dx.\]

For any sequence of time, we have the profile decomposition by \cite{Palatucci2015}.

\begin{lem}
For any sequence $t_k\to \infty$, there exists an integer $L$ and sequences $x_{k,l}$, $\eps_{k,l}$, $l=1\dots L$ such that, passing to a subsequence if necessary,
\begin{equation}\label{eq:profileconv}
    u(t_k)-\sum_{l=1}^L{u}_{(x_{k,l},\eps_{k,l})}\to u_\infty
    \quad\text{ in }H^\gamma (M,g_0),
\end{equation}
where $u_\infty\geq 0$ solves
\begin{align}\label{eq:uinf-main}
    P^{g_0}_\gamma u_\infty=s_\infty u_\infty^{\frac{n+2\gamma}{n-2\gamma}}
        \quad\texton(M,g_0),
\end{align}
and
\[u_{(x_{k,l},\eps_{k,l})}(x)
    =\varphi_{x_{k,l}}(x)
        \bar{u}\left(\eps_{k,l}^{-1}\exp_{x_{k,l}}^{-1}(x)\right),
\]
with $\bar u=\bar{\alpha}_{n,\gamma}\left(1+|x|^2\right)^{-\frac{n-2\gamma}{2}}$, the standard bubble solving
\begin{align*}
    (-\Delta_{\mathbb{R}^n})^\gamma \bar u
    =\bar u^\frac{n+2\gamma}{n-2\gamma}
        \quad\texton\R^n.
\end{align*}
Moreover,
\begin{align}\label{eq:vol_decomp}
    \vol(u(t_k))=\vol(u_\infty)+L \cdot \volbar(\bar u).
\end{align}
\end{lem}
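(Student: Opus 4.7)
The plan is to apply the Palatucci--Pisante profile decomposition \cite{Palatucci2015} to $\{u(t_k)\}$; the main work is to verify its two hypotheses, namely that this sequence is bounded in $H^\gamma(M,g_0)$ and that it is a Palais--Smale sequence for the constrained Yamabe functional $E$ on the manifold $\{\vol=1\}$.

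For the $H^\gamma$-bound, we first observe that, since $\vol(u(t))\equiv 1$ and $\Pgo u=Ru^{(n+2\gamma)/(n-2\gamma)}$, a direct computation gives the identity
\[\int_M u(t)\Pgo u(t)\,d\mu_0=\int_M R(t)\,d\mu=s(t).\]
By Lemma \ref{lem:Rt}(2) the quantity $s(t)$ is non-increasing, while the definition of the Yamabe constant bounds it from below by $Y_\gamma(M,[g])>0$, so $s(t)$ is confined to a bounded interval. Together with $\|u(t)\|_{L^{2n/(n-2\gamma)}}=1$ and compactness of $M$, this yields a uniform $H^\gamma(M,g_0)$ bound on $u(t_k)$.

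For the Palais--Smale condition, the constrained Euler--Lagrange residual of $E$ at $u(t_k)$ is
\[\mathcal{R}_k:=\Pgo u(t_k)-s(t_k)\,u(t_k)^{\frac{n+2\gamma}{n-2\gamma}}=\left(R(t_k)-s(t_k)\right)u(t_k)^{\frac{n+2\gamma}{n-2\gamma}}.\]
The key step is to choose the exponent $p=\tfrac{2n}{n+2\gamma}$, which lies in $[1,\tfrac{n+2\gamma}{2\gamma})$ under the assumption $n>2\gamma$. This choice is designed precisely so that the $u$-weight cancels when converting $d\mu_0$ to $d\mu$:
\[\|\mathcal{R}_k\|_{L^p(d\mu_0)}^p=\int_M|R(t_k)-s(t_k)|^p\,u(t_k)^{\frac{p(n+2\gamma)}{n-2\gamma}}\,d\mu_0=\int_M|R(t_k)-s(t_k)|^p\,d\mu=F_p(g(t_k)),\]
which tends to $0$ by Lemma \ref{lem:Fpto0}. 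Since the Hölder conjugate $p'=\tfrac{2n}{n-2\gamma}$ is the critical Sobolev exponent, the continuous embedding $L^p(d\mu_0)\hookrightarrow H^{-\gamma}(M,g_0)$ gives $\mathcal{R}_k\to 0$ in $H^{-\gamma}(M,g_0)$.

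With these two ingredients, \cite{Palatucci2015} yields, along a subsequence, the decomposition \eqref{eq:profileconv} with $u_\infty\in H^\gamma(M,g_0)$ weakly solving \eqref{eq:uinf-main} for $s_\infty:=\lim_k s(t_k)$, the limit existing by monotonicity. Non-negativity of $u_\infty$ is inherited from $u(t_k)>0$ (Proposition \ref{prop:Brendle-2.4}) by passing to a pointwise almost-everywhere limit along a further subsequence. The volume identity \eqref{eq:vol_decomp} is a standard by-product of the asymptotic $L^{2n/(n-2\gamma)}$-orthogonality of bubbles concentrating at distinct scales and base points, built into the profile decomposition. The principal obstacle of the argument is the dual-norm step: translating the $d\mu$-weighted control afforded by $F_p(g(t_k))\to 0$ into a statement in the background space $H^{-\gamma}(M,g_0)$ required by Palatucci--Pisante. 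The choice $p=2n/(n+2\gamma)$ is tailored precisely so that the conformal factor disappears in the first integral and $p'$ is exactly the critical Sobolev exponent, making the critical-dual pairing close without any loss.
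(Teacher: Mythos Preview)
Your proof is correct and follows exactly the same approach as the paper: apply the Palatucci--Pisante profile decomposition after verifying the Palais--Smale condition via the identity
\[
\int_M\bigl|\Pgo u(t_k)-s(t_k)u(t_k)^{\frac{n+2\gamma}{n-2\gamma}}\bigr|^{\frac{2n}{n+2\gamma}}\,d\mu_0=\int_M|R(t_k)-s(t_k)|^{\frac{2n}{n+2\gamma}}\,d\mu=F_{\frac{2n}{n+2\gamma}}(g(t_k))\to0,
\]
invoking Lemma~\ref{lem:Fpto0}. Your write-up is in fact more detailed than the paper's two-line sketch: you make explicit the $H^\gamma$-bound (from $\vol(u(t))\equiv1$ and $s(t)\leq s_0$), the dual embedding $L^{2n/(n+2\gamma)}\hookrightarrow H^{-\gamma}$, and the non-negativity of $u_\infty$, all of which the paper leaves implicit.
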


\begin{proof}
By \cite{Palatucci2015}, such profile decomposition holds as long as the Palais--Smale condition is verified\footnote{The authors proved the result in $\R^n$. In the manifold setting, the proof is almost identical.}. Indeed, from Lemma \ref{lem:Fpto0},
\begin{align*}
    \int_M |P_\gamma^{g_0}u-su^{\frac{n+2\gamma}{n-2\gamma}}|^{\frac{2n}{n+2\gamma}}d\mu_0=\int_M |R-s|^{\frac{2n}{n+2\gamma}}d\mu\to 0.
\end{align*}
Hence the result follows.
\end{proof}

Actually \eqref{eq:vol_decomp} means
\[1=\left(\frac{E(u_\infty)}{s_\infty}\right)^{\frac{n}{2\gamma}}+L\left(\frac{Y_\gamma(\mathbb{S}^n)}{s_\infty}\right)^{\frac{n}{2\gamma}},\]
which is
\[s_\infty=\left[E(u_\infty)^{\frac{n}{2\gamma}}
    +LY_\gamma(\mathbb{S}^n)^{\frac{n}{2\gamma}}\right]^{\frac{2\gamma}{n}}.\]
Obviously $E(u_\infty)\geq Y_\gamma(M,[g])$. By Lemma \ref{lem:Rt} and the assumption \eqref{eq:s0},
\[s_\infty\leq s_0\leq\left[(Y_\gamma(M,[g]))^{\frac{n}{2\gamma}}
    +Y_\gamma(\mathbb{S}^n)^{\frac{n}{2\gamma}}\right]^{\frac{2\gamma}{n}}.\]
By the Aubin inequality (see \cite{QG})
\begin{equation}\label{eq:Aubin-ineq}
Y_\gamma(M,[g])\leq Y_\gamma(\mathbb{S}^n),
\end{equation}
we conclude that either
\begin{enumerate}
    \item $L=0$ and $u_\infty>0$; or
    \item $L=1$ and $u_\infty\equiv 0$.
\end{enumerate}
\begin{remark}
The Aubin inequality \eqref{eq:Aubin-ineq} can be proved using concentration-compactness (as opposed to test functions) for any $\gamma \in (0,n/2)$.
\end{remark}

Using the version of strong maximum principle, again proved in \cite{QG}, one has
\begin{lem}
Either $u_\infty>0$ or $u_\infty\equiv 0$.
\end{lem}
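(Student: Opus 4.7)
The plan is to lift the limit equation $\Pgo \uinf = s_\infty \uinf^{(n+2\gamma)/(n-2\gamma)}$ to the degenerate elliptic extension on $(X,\bar{g}_0)$ and then apply the weighted strong maximum principle together with a Hopf boundary point lemma for the $A_2$-Muckenhoupt weight $\rho^{1-2\gamma}$ with $\gamma\in(0,1)$; both tools are recorded in \cite{QG} for precisely this operator, so we invoke them as black boxes.

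Concretely, let $U_\infty$ be the extension \eqref{eq:extension} of $\uinf$, so that
\[-\Div(\rho^{1-2\gamma}\nabla U_\infty)=0\textin X,\qquad U_\infty|_{M}=\uinf,\]
with the Neumann-type condition
\[-c_\gamma\lim_{\rho\to 0}\rho^{1-2\gamma}\p_\rho U_\infty=(\Pgo-R_{g_0})\uinf=s_\infty \uinf^{\frac{n+2\gamma}{n-2\gamma}}-R_{g_0}\uinf\texton M.\]
Since $\uinf\geq 0$ on $M$, testing the extension against $\min(U_\infty,0)$ (in exactly the same fashion as in the earlier proof of the positivity of $R(t)$) gives $U_\infty\geq 0$ throughout $\bar X$. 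Assuming now $\uinf\not\equiv 0$, we have $U_\infty\not\equiv 0$, and the weighted interior strong maximum principle for $-\Div(\rho^{1-2\gamma}\nabla\cdot)$ forces $U_\infty>0$ on $X\setminus M$.

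Suppose for contradiction that $\uinf(x_0)=0$ at some $x_0\in M$. Then $x_0$ is a boundary minimum of the nonnegative function $U_\infty$, and since $U_\infty>0$ immediately off the boundary the weighted Hopf boundary point lemma supplies $\lim_{\rho\to 0}\rho^{1-2\gamma}\p_\rho U_\infty(x_0)>0$, so that $-c_\gamma\lim_{\rho\to 0}\rho^{1-2\gamma}\p_\rho U_\infty(x_0)<0$. But evaluating the boundary equation at $x_0$ gives right-hand side $s_\infty\cdot 0-R_{g_0}(x_0)\cdot 0=0$, a contradiction. Hence $\uinf>0$ on $M$. The only non-routine step is checking that the Hopf lemma really applies at the degenerate boundary in the curved manifold setting; this is the precise technical content packaged in \cite{QG}, which is why the lemma amounts to a citation rather than an independent argument.
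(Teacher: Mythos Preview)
Your proposal is correct and follows the same approach as the paper, which simply invokes the strong maximum principle proved in \cite{QG}; you have just unpacked what that citation entails by passing to the extension and combining the interior weighted strong maximum principle with the Hopf boundary lemma at a hypothetical zero of $u_\infty$. The only remark is that your testing step for $U_\infty\ge 0$ on $\bar X$ is in fact even simpler than you suggest, since $\min(U_\infty,0)$ vanishes on $M$ (because $u_\infty\ge 0$) and hence the boundary term drops out entirely.
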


Thus the above cases are a dichotomy and are to be referred to as the \emph{compact case} and the \emph{noncompact case} respectively.

The following proposition is crucial in proving the convergence, and its proof is the content of Sections \ref{sec:compact}--\ref{sec:noncompact}.

\begin{prop}\label{prop:4.16}
For any sequence $t_k\to \infty$ there exist constants $\delta\in (0,1)$ such that for a subsequence there holds
\[s(t_k)-s_\infty\leq CF_{\frac{2n}{n+2\gamma}}(g(t_k))^{\frac{n+2\gamma}{2n}(1+\delta)}.\]
\end{prop}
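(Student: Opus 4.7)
The plan is to split the argument along the dichotomy established above and treat the compact and noncompact cases separately, as is done in \cite{bookStruwe,brendle1,brendle2}.

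First I would note that the quantity $F_{\frac{2n}{n+2\gamma}}(g(t_k))^{\frac{n+2\gamma}{2n}}$ is, up to constants, the $L^{\frac{2n}{n+2\gamma}}(d\mu)$-norm of $R-s$, which by the conformal change \eqref{eq:conformalP} is a natural dual-Sobolev norm of the gradient of the Yamabe functional $E$ at $u(t_k)$. Thus the desired inequality is a \L{}ojasiewicz--Simon inequality relating the energy excess $s(t_k)-s_\infty=E(u(t_k))-E(u_\infty)$ to a suitable power of the gradient of $E$ at $u(t_k)$.

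In the compact case ($L=0$ and $u_\infty>0$), the convergence $u(t_k)\to u_\infty$ in $H^\gamma(M,g_0)$ from \eqref{eq:profileconv} and the positivity of $u_\infty$ put us in a neighborhood of a smooth, positive critical point of the analytic functional $E$. Here the strategy is to invoke the \L{}ojasiewicz--Simon inequality in its functional form: because $E$ is real-analytic on a suitable Sobolev space around $u_\infty$ and $P_\gamma^{g_0}$ is an analytic self-adjoint Fredholm operator of index zero (the linearization at $u_\infty$), a standard application of the analytic \L{}ojasiewicz theorem (as in Simon's original argument) yields
\[
|E(u(t_k))-E(u_\infty)|^{1-\delta}\leq C\|E'(u(t_k))\|_{(H^\gamma)^*}
\]
for some $\delta\in(0,1/2)$, and then $H^\gamma\hookrightarrow L^{\frac{2n}{n-2\gamma}}$ duality lets me replace the right-hand side by $F_{\frac{2n}{n+2\gamma}}(g(t_k))^{\frac{n+2\gamma}{2n}}$. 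Combining with the uniform positivity and boundedness of $u(t_k)$ from Proposition \ref{prop:Brendle-2.4} to pass from $E'(u)$ to the $L^{\frac{2n}{n+2\gamma}}$-norm of $R-s$ gives the conclusion with a suitable $\delta>0$.

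In the noncompact case ($L=1$, $u_\infty\equiv 0$), the behaviour of $u(t_k)$ is dominated by a single concentrating bubble $u_{(x_k,\varepsilon_k)}$ with $\varepsilon_k\to 0$. Here the \L{}ojasiewicz structure is no longer provided by analyticity near a nondegenerate critical point; instead one must compute the energy $E$ on the test configuration $u_{(x_k,\varepsilon_k)}$ rather explicitly and compare it with $Y_\gamma(\mathbb{S}^n)$. This is exactly the step where the Positive Mass Conjecture enters: expanding the Green's function $G(x,x_k)=g_{n,\gamma}|x-x_k|^{-(n-2\gamma)}+A+\psi(x)$ and using the bubble profile, the leading-order correction to $E(u_{(x_k,\varepsilon_k)})-Y_\gamma(\mathbb{S}^n)$ is of the form $-c\,A\,\varepsilon_k^{n-2\gamma}+o(\varepsilon_k^{n-2\gamma})$, which is \emph{strictly negative} thanks to the assumption $A>0$. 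Pairing this expansion with the lower bound on the gradient coming from the projection of $u(t_k)$ onto the directions transverse to the manifold of bubbles (exactly as in Brendle's \cite{brendle1,brendle2}) yields an estimate
\[
s(t_k)-s_\infty\leq C\varepsilon_k^{n-2\gamma},\qquad
F_{\frac{2n}{n+2\gamma}}(g(t_k))\geq c\varepsilon_k^{(n-2\gamma)\frac{n+2\gamma}{n}(1-\delta')}
\]
for some small $\delta'>0$ depending on the exponent $\min\{1,2\gamma\}$ in the remainder $\psi$; algebraic rearrangement gives the claimed inequality with a suitable $\delta\in(0,1)$.

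The main obstacle is the noncompact case. The compact case is a relatively clean application of the analytic \L{}ojasiewicz--Simon machinery, which is by now standard (the only non-trivial point being to verify analyticity and Fredholmness of $P_\gamma^{g_0}$-based functionals, both of which are available). The delicate part is the explicit expansion of $E$ on near-bubble configurations in the nonlocal setting: the lack of a pointwise representation of $P_\gamma^g$ forces one to work through the extension \eqref{eq:extension}, keep careful track of boundary terms near the concentration point $x_k$, and isolate the mass term $A$ from the error term $\psi$ whose regularity is only $C^{\min\{1,2\gamma\}}$. Controlling these remainders sharply enough to produce a positive exponent $\delta$ is where all the work lies, and is the step I would treat in greatest detail in the actual proof.
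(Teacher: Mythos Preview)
Your treatment of the compact case is in the right spirit but differs from the paper's execution. Rather than invoke an abstract infinite-dimensional \L{}ojasiewicz--Simon inequality, the paper performs an explicit Lyapunov--Schmidt reduction (Lemma \ref{Lemma6.4}) onto a finite-dimensional family $\bar u_z$ parametrized by the low eigenspace of the linearization, applies the finite-dimensional \L{}ojasiewicz inequality to $z\mapsto E(\bar u_z)$ (Lemma \ref{Lemma6.5}), and controls the remainder $w_k=u_k-\bar u_{z_k}$ by coercivity (Proposition \ref{Propo6.9}) and the estimate $\|w_k\|_{H^\gamma}\leq C F_{\frac{2n}{n+2\gamma}}^{\frac{n+2\gamma}{2n}}$ (Lemma \ref{lem:4.22}). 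Either route can be made to work; note, however, that your appeal to Proposition \ref{prop:Brendle-2.4} for uniform positivity of $u(t_k)$ is circular, since that proposition gives only $T$-dependent bounds and the uniform-in-$t$ bounds are obtained \emph{after} Proposition \ref{prop:4.16}.

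Your noncompact argument has a genuine gap. You propose two quantitative $\eps_k$-estimates, $s(t_k)-s_\infty\leq C\eps_k^{n-2\gamma}$ and $F_{\frac{2n}{n+2\gamma}}(g(t_k))\geq c\,\eps_k^{(n-2\gamma)\frac{n+2\gamma}{n}(1-\delta')}$, and combine them algebraically. Neither follows from what you describe. The first would need $E(u_k)-s_\infty$ to be controlled by the bubble energy defect alone, but $u_k=v_k+w_k$ and the $w_k$-contribution to $E(u_k)$ is $O(\|w_k\|_{H^\gamma}^2)$, which must be tied to the gradient norm rather than to $\eps_k$. The second, a \emph{lower} bound on the gradient in terms of $\eps_k$, is not available: $F_{\frac{2n}{n+2\gamma}}$ measures the deviation of $R$ from $s$, which vanishes on an exact bubble and has no a priori lower bound by any power of $\eps_k$.

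The paper's actual argument is both simpler and different in kind, and in fact \emph{shorter} than the compact case. Positive Mass is used only for the \emph{sign} $E(v_k)<s_\infty$ (via the test function of \cite{Kim2018}); no quantitative $\eps_k$-expansion is needed. One takes $v_k$ to be the best-approximating bubble, proves coercivity of the second variation on the orthogonal complement of the $(n+2)$-dimensional bubble tangent space (Lemma \ref{lem:orth-decomp-2}, via the spectrum of $P_\gamma^{g_{\mathbb{S}^n}}$), and expands $E(u_k)$ exactly as in the compact case. The term $(E(v_k)-s_\infty)(\cdots)$ is then simply dropped because it is negative, and the remaining terms are bounded by $\|w_k\|_{H^\gamma}^2+\|w_k\|_{H^\gamma}^{\min\{2n/(n-2\gamma),3\}}$, which is controlled by the gradient norm. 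Thus the \L{}ojasiewicz step is entirely replaced by the strict inequality $E(v_k)<s_\infty$, and no matching of $\eps_k$-powers occurs.
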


One consequence of this proposition is
\begin{lem}
There exist constants $\delta\in (0,1)$ and $T$ such that for all $t>T$ there holds
\[s(t)-s_\infty\leq CF_2(g(t))^{\frac{1+\delta}{2}}.\]
\end{lem}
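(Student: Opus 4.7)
The plan is to reduce the claim to Proposition~\ref{prop:4.16} in two steps: (i) a H\"older interpolation that converts $F_{2n/(n+2\gamma)}$ into $F_{2}$ using the preserved unit volume, and (ii) a contradiction argument that upgrades the subsequential bound of the proposition to one valid for all large $t$.

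First I would note that $s(t)$ is monotone non-increasing by Lemma~\ref{lem:Rt}(2) and bounded below by $0$ since $R(t)\geq 0$, so
\[
s_\infty:=\lim_{t\to\infty}s(t)
\]
exists and coincides with the $s_\infty$ appearing in Proposition~\ref{prop:4.16} (any subsequential limit of $s(t_k)$ must equal this unique limit). Setting $p=\frac{2n}{n+2\gamma}\in[1,2)$, the fact that $\mu_g(M)=1$ throughout the flow and H\"older's inequality give
\[
F_p(g(t))^{1/p}\leq F_2(g(t))^{1/2}.
\]
Using the identity $\frac{n+2\gamma}{2n}(1+\delta)=\frac{1+\delta}{p}$, this upgrades to
\[
F_p(g(t))^{\frac{n+2\gamma}{2n}(1+\delta)}=\bigl(F_p(g(t))^{1/p}\bigr)^{1+\delta}\leq F_2(g(t))^{\frac{1+\delta}{2}},
\]
so Proposition~\ref{prop:4.16} yields, along a subsequence of any prescribed $t_k\to\infty$, the estimate $s(t_k)-s_\infty\leq CF_2(g(t_k))^{(1+\delta)/2}$.

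To promote this subsequential statement to the ``for all $t>T$'' statement, I would argue by contradiction. Fix the constants $\delta$ and $C$ produced above. If the lemma failed, one could extract a sequence $t_k\to\infty$ with
\[
s(t_k)-s_\infty>k\,F_2(g(t_k))^{(1+\delta)/2}.
\]
One may assume $F_2(g(t_k))>0$ (otherwise $R(t_k)\equiv s(t_k)$ makes the flow stationary from $t_k$ onward and the lemma is trivial), and $F_2(g(t_k))\to 0$ by Lemma~\ref{lem:Fpto0}. Applying Proposition~\ref{prop:4.16} to $\{t_k\}$ and inserting the H\"older bound extracts a subsequence satisfying $s(t_{k_j})-s_\infty\leq CF_2(g(t_{k_j}))^{(1+\delta)/2}$, forcing $k_j\leq C$ for infinitely many $j$, a contradiction.

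The hard part will be ensuring uniformity of the exponent $\delta$ in Proposition~\ref{prop:4.16}: the contradiction argument needs a single $\delta\in(0,1)$ that works for every sequence. This should be read off from the proof of the proposition, where $\delta$ arises from a \L ojasiewicz-type inequality whose exponent depends only on $\gamma$ and the background geometry, not on the particular subsequence. Granted this uniformity (standard in such convergence arguments), the two-step reduction above completes the proof.
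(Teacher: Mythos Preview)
Your H\"older interpolation from $F_{2n/(n+2\gamma)}$ to $F_2$ is correct and is exactly what the paper does. The gap is precisely the one you flag: your contradiction argument fixes $\delta$ in advance and then needs that \emph{same} $\delta$ to come out of Proposition~\ref{prop:4.16} when applied to the bad sequence. But the proposition only produces a $\delta$ \emph{after} a sequence has been chosen. You suggest the uniformity can be read off from the proof, yet in the compact case $\delta$ is the {\L}ojasiewicz exponent at the limiting critical point $u_\infty$, and $u_\infty$ may itself depend on which subsequence one takes (the fractional Yamabe equation \eqref{eq:uinf-main} can have several solutions). So the claim that $\delta$ depends only on $\gamma$ and the background geometry is not justified; filling it would require an additional compactness argument for the set of possible limits together with a uniform {\L}ojasiewicz estimate, none of which is supplied.

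The paper sidesteps this entirely by negating differently. Instead of fixing $\delta$ and letting the constant blow up, it lets the \emph{exponent} degenerate: if the lemma fails, pick $t_k\to\infty$ with
\[
s(t_k)-s_\infty \geq F_2(g(t_k))^{\frac{1+1/k}{2}}.
\]
Now apply Proposition~\ref{prop:4.16} \emph{once}, to this single sequence, obtaining some $\delta\in(0,1)$ and a subsequence along which (after the same H\"older step you wrote) $s(t_k)-s_\infty\leq C\,F_2(g(t_k))^{(1+\delta)/2}$. Combining the two inequalities gives $1\leq C\,F_2(g(t_k))^{(\delta-1/k)/2}$; once $k>1/\delta$ the exponent is positive, and $F_2(g(t_k))\to0$ by Lemma~\ref{lem:Fpto0} forces a contradiction. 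The point is that whatever $\delta$ the proposition returns for this particular sequence, it eventually dominates $1/k$, so no uniformity of $\delta$ across sequences is ever needed.
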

\begin{proof}
Suppose this is not true. One can find a sequence $t_k\to \infty$ such that
\[s(t_k)-s_\infty\geq F_2(g(t_k))^{\frac{1+1/k}{2}}.\]
However, Proposition \ref{prop:4.16} can be applied to this sequence
\[s(t_k)-s_\infty\leq CF_{\frac{2n}{n+2\gamma}}(g(t_k))^{\frac{n+2\gamma}{2n}(1+\delta)}\leq CF_2(g(t_k))^{\frac{1+\delta}{2}},\]
the last inequality following from H\"{o}lder's inequality. Putting the two inequalities together, we obtain
\[1\leq CF_2(g(t_k))^{\frac{\delta-1/k}{2}},\]
which contradicts Lemma \ref{lem:Fpto0} when $k$ is sufficiently large.
\end{proof}

\begin{lem}
We have
\[\int_0^\infty F_2(g(t))^{\frac12}dt<\infty.\]
\end{lem}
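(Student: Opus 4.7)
The plan is to interpret the two previous results as a differential inequality for the monotone quantity $s(t)-s_\infty$ and then convert the desired integral $\int_0^\infty F_2(g(t))^{1/2}\,dt$ into an elementary one-variable integral via the substitution $\sigma=s(t)-s_\infty$.

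First, recall from Lemma \ref{lem:Rt}(2) that $\dfrac{d}{dt}s(t)=-2F_2(g(t))\le0$, so $s(t)$ decreases monotonically to some limit which, by the profile decomposition, must equal $s_\infty$. In particular, $\sigma(t):=s(t)-s_\infty\ge0$ and $\sigma'(t)=-2F_2(g(t))$. Next, by the preceding lemma, there exist $\delta\in(0,1)$ and $T>0$ such that for all $t>T$,
\[
\sigma(t)\le C\,F_2(g(t))^{\frac{1+\delta}{2}},
\qquad\text{i.e.}\qquad
F_2(g(t))^{\frac12}\ge C'\sigma(t)^{\frac{1}{1+\delta}}.
\]

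The key algebraic step is to write $F_2^{1/2}=F_2\cdot F_2^{-1/2}$ and to use the lower bound above on $F_2^{1/2}$ in the denominator, giving, for $t>T$,
\[
F_2(g(t))^{\frac12}
\le \frac{1}{C'}\,\frac{F_2(g(t))}{\sigma(t)^{\frac{1}{1+\delta}}}
=-\frac{1}{2C'}\,\frac{\sigma'(t)}{\sigma(t)^{\frac{1}{1+\delta}}}.
\]
Integrating over $[T,\infty)$ and performing the change of variables $\sigma=\sigma(t)$ (valid since $\sigma$ is monotone), this becomes
\[
\int_{T}^{\infty}F_2(g(t))^{\frac12}\,dt
\le \frac{1}{2C'}\int_{0}^{\sigma(T)}\sigma^{-\frac{1}{1+\delta}}\,d\sigma
=\frac{1+\delta}{2C'\delta}\,\sigma(T)^{\frac{\delta}{1+\delta}}<\infty,
\]
where the improper integral is finite precisely because $\dfrac{1}{1+\delta}<1$, which is where the strict positivity $\delta>0$ is crucial.

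Finally, to handle the interval $[0,T]$, one invokes Lemma \ref{lem:Brendle-lem2.5} with $q=1$, which yields $F_2(g(t))\le C(T,g_0)$ uniformly on $[0,T]$, so that $\int_0^T F_2(g(t))^{1/2}\,dt\le T^{1/2}\bigl(\int_0^T F_2(g(t))\,dt\bigr)^{1/2}<\infty$. Combining the two pieces completes the proof. The only genuinely substantive step is the inequality $F_2^{1/2}\le -C\sigma'/\sigma^{1/(1+\delta)}$; once obtained, the remainder is a direct computation, and no additional analytic obstacles should arise.
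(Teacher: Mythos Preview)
Your proof is correct and more streamlined than the paper's. Both start from the same two inputs: $\sigma'=-2F_2$ (Lemma \ref{lem:Rt}(2)) and $\sigma\le CF_2^{(1+\delta)/2}$ from the preceding lemma. The paper first combines these into the ODE inequality $\sigma'\le -C\sigma^{2/(1+\delta)}$, integrates to obtain the explicit decay $\sigma(t)\le Ct^{-(1+\delta)/(1-\delta)}$, and then applies Cauchy--Schwarz on dyadic blocks $[2^k,2^{k+1}]$ together with $\int_T^{2T}F_2\,dt=\tfrac12(\sigma(T)-\sigma(2T))$ to bound $\int F_2^{1/2}\,dt$ by a convergent geometric series. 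You bypass both the decay-rate step and the dyadic summation: writing $F_2^{1/2}\le -\tfrac{1}{2C'}\,\sigma'\sigma^{-1/(1+\delta)}$ and changing variable to $\sigma$ collapses everything to $\int_0^{\sigma(T)}\sigma^{-1/(1+\delta)}\,d\sigma<\infty$. The paper's route has the minor side benefit of recording an explicit polynomial decay rate for $s(t)-s_\infty$; yours is shorter and makes the role of $\delta>0$ transparent. One small caveat: the step $F_2^{-1/2}\le (C')^{-1}\sigma^{-1/(1+\delta)}$ tacitly assumes $F_2>0$ on $(T,\infty)$; if $\sigma$ vanishes at some finite $t_0$ then by monotonicity $\sigma\equiv0$ and hence $F_2\equiv0$ thereafter, so the tail is trivially zero --- harmless, but worth a one-line remark.
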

\begin{proof}
Recall the relation
\[
    \frac{d}{dt}(s(t)-s_\infty)=-2F_2(g(t))\leq -C(s(t)-s_\infty)^{\frac{2}{1+\delta}}
\]
where $\delta \in (0,1)$. This differential inequality implies
\[s(t)-s_\infty\leq Ct^{-\frac{1+\delta}{1-\delta}}\]
for some constant $C>0$ and $t$ sufficiently large. Using H\"{o}lder's inequality, we obtain
\[\int_T^{2T}F_2(g(t))^{\frac{1}{2}}dt\leq T^\frac{1}{2}\left(\int_T^{2T}F_2(g(t))dt\right)^{\frac{1}{2}}\leq \frac{T^\frac{1}{2}}{4}(s(T)-s(2T))^\frac{1}{2}\leq CT^{-\frac{\delta}{1-\delta}}\]
if $T$ sufficently large. Since $\delta\in (0,1)$, we conclude that
\[\int_1^\infty F_2(g(t))^{\frac{1}{2}}dt\leq \sum_{k=0}^\infty\int_{2^k}^{2^{k+1}}F_2(g(t))^{\frac{1}{2}}dt\leq C\sum_{k=0}^{\infty}2^{-\frac{\delta}{1-\delta }k}\leq C.\]
\end{proof}
\begin{prop}\label{prop:3.6}
Given any $\epsilon_0>0$, there exists a real number $r>0$ and $q>\frac{n}{2
\gamma}$ such that
\[\int_{B_r(x)}|R(g(t))|^qd\mu(t)\leq \epsilon_0\]
for all $x\in M $ and $t\geq 0$.
\end{prop}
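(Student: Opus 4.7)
The plan is to combine a uniform-in-time $L^q$ bound for $R$ with some $q>n/(2\gamma)$ with a uniform smallness estimate for the conformal volume $\mu(B_r(x))$ as $r\to 0$, and then apply H\"older's inequality.

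I would first fix $q$ in the nonempty range $\left(\frac{n}{2\gamma},\frac{n+2\gamma}{2\gamma}\right)$. For large $t$, Lemma \ref{lem:Fpto0} gives $F_q(g(t))\to 0$, and together with the uniform bound $s(t)\le s_0$ this yields $\sup_{t\ge T_0}\|R(t)\|_{L^q(d\mu)}<\infty$ for some $T_0$. For $t\in[0,T_0]$, the uniform two-sided bound on $u$ from Proposition \ref{prop:Brendle-2.4} combined with the $L^p$ estimate \eqref{eq:W2p} for $P_\gamma^{g_0}u$ translates, via $R=u^{-(n+2\gamma)/(n-2\gamma)}P_\gamma^{g_0}u$, into a uniform $L^q$ bound on $R$. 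Splitting $R^q\le 2^{q-1}(|R-s|^q+s^q)$ then reduces the proposition to establishing that $\mu(B_r(x))$ is small for small $r$, uniformly in $x\in M$ and $t\ge 0$.

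To obtain this uniform absolute continuity, I would show that $u(t)$ has a slightly better integrability than the Sobolev embedding provides by itself: there exists $\eta>0$ with $\|u(t)\|_{L^{2n/(n-2\gamma)+\eta}(d\mu_0)}\le C$ uniformly in $t$. Starting from the a priori bound $\|u(t)\|_{L^{2n/(n-2\gamma)}}\le C$ (from Sobolev applied to $E(u)=s\le s_0$), I would bootstrap on the equation $P_\gamma^{g_0}u=Ru^{(n+2\gamma)/(n-2\gamma)}$, using the uniform $L^q$ control of $R$ from above together with standard nonlocal elliptic regularity for $P_\gamma^{g_0}$. A H\"older inequality then gives $\mu(B_r(x))\le Cr^{\alpha}$ for some $\alpha>0$ uniformly in $x$ and $t$, and the proposition follows by choosing $r$ small enough to absorb both $2^{q-1}F_q(g(t))$ (for $t\ge T_0$) and $2^{q-1}s_0^q\mu(B_r(x))$ into $\epsilon_0$.

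The main obstacle will be this uniform higher integrability of $u$, which is essentially a quantitative no-concentration statement: on its own the Sobolev embedding only yields $L^{2n/(n-2\gamma)}$, and upgrading this uniformly in $t$ requires a careful nonlocal bootstrap that does not deteriorate along the flow, even in a regime where the profile decomposition in principle allows bubbles to form. Controlling this bootstrap with constants independent of $t$ is the technical heart of the argument and the step where the nonlocality of $P_\gamma^{g_0}$ is most delicate.
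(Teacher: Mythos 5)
Your plan correctly isolates what needs to be shown---a uniform $L^q$ bound on $R$, which you obtain as the paper does from Lemma \ref{lem:Fpto0}, plus a uniform no-concentration statement $\sup_x\mu_{g(t)}(B_r(x))\to 0$ as $r\to 0$---but the route you propose for the second ingredient does not close, and it is genuinely different from (and weaker than) what the paper does.

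Your idea is to upgrade $u(t)\in L^{2n/(n-2\gamma)}(d\mu_0)$ to $L^{2n/(n-2\gamma)+\eta}(d\mu_0)$ uniformly in $t$ by bootstrapping on $P_\gamma^{g_0}u=R\,u^{(n+2\gamma)/(n-2\gamma)}$. The nonlinearity here is Sobolev-critical, so any such elliptic bootstrap is an $\varepsilon$-regularity/Brezis--Kato type argument: to start the iteration one must already know that $\|u\|_{L^{2n/(n-2\gamma)}(B_r(x))}$ is small for some fixed $r$ uniformly in $x$ and $t$. That is exactly the no-concentration statement you are trying to prove, so the argument is circular. More concretely, at the point in the paper where Proposition \ref{prop:3.6} is proved, the profile decomposition still allows $L\geq 1$ bubbles; if $u(t_k)$ does develop a bubble, then $u(t_k)$ is \emph{not} uniformly in $L^{2n/(n-2\gamma)+\eta}$, and no purely elliptic estimate can rule this out. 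You flagged this as ``the technical heart,'' but it is not merely technical---it is an obstruction that elliptic regularity alone cannot overcome.

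The paper instead exploits the \emph{parabolic} structure. Differentiating the local volume along the flow gives
\[
\frac{d}{dt}\int_{B_r(x)}u^{\frac{2n}{n-2\gamma}}\,d\mu_0
=-\frac{2n}{n-2\gamma}\int_{B_r(x)}u^{\frac{2n}{n-2\gamma}}(R-s)\,d\mu_0,
\]
which by Cauchy--Schwarz (with $\mu_{g(t)}(M)=1$) is bounded by $C\,F_2(g(t))^{1/2}$ up to the square root of the quantity itself. Integrating this differential inequality from a fixed large time $T$ to $t$ and using the lemma preceding Proposition \ref{prop:3.6}, namely $\int_0^\infty F_2(g(t))^{1/2}\,dt<\infty$ (itself a consequence of the {\L}ojasiewicz-type decay in Proposition \ref{prop:4.16}), one sees that the local volume can only drift by an amount that is small once $T$ is large. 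For $t\in[0,T]$ the uniform two-sided bounds on $u$ from Proposition \ref{prop:Brendle-2.4} make the local volume small for $r$ small. This is where the proof genuinely uses that one is on a gradient-like flow with summable dissipation, not just solving the elliptic equation at each time slice. Your proposal misses this input entirely, and without it the no-concentration step has no foothold. The H\"older step at the end (splitting $\int_{B_r}|R|^q\,d\mu$ against the small local volume with two exponents $q<p$ in $(n/(2\gamma),\,n/(2\gamma)+1)$) is the same in both arguments.
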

\begin{proof}
We can find a real number $T>0$ such that
\[\int_T^\infty\left(\int_M u(t)^{\frac{2n}{n-2\gamma}}(R(g(t)-s(t))^2d\mu_0\right)\leq \frac{\epsilon_0}{n}.\]
Choosing a real number $r>0$ such that
\[\int_{B_r(x)}u(t)^{\frac{2n}{n-2\gamma}}d\mu_{0}\leq \frac{\epsilon_0}{2}\]
for all $x\in M$ and $0\leq t\leq T$. Then for any $t\geq T$, we have
\[\begin{split}
&\int_{B_r(x)}u(t)^{\frac{2n}{n-2\gamma}}d\mu_0\\
\leq&\int_{B_r(x)}u(T)^{\frac{2n}{n-2\gamma}}d\mu_0+\frac n2 \int_T^\infty\left(\int_M u(t)^{\frac{2n}{n-2\gamma}}(R(g(t)-s(t))^2d\mu_0\right)\\
\leq&\epsilon_0.
\end{split}\]

From Lemma \ref{lem:Fpto0}, we can find $p,q\in(\frac{n}{2\gamma},\frac{n}{2\gamma}+1)$ such that $q<p$ and
\[\int_M |R(g(t))|^pd\mu_{g(t)}\leq C\]
for some constant $C$ independent of $t$. By the previous part of this proof, one can find $r>0$ independent of $t$ such that
\[\int_{B_r(x)}d\mu(t)\leq \epsilon_0.\]
Using H\"{o}lder's inequality, then
\[\int_{B_r(x)}|R(g(t))|^qd\mu(t)\leq C\epsilon_0^{1-\frac{q}{p}}.\]
Since $\epsilon_0$ can be chosen arbitrarily small, the proposition is proved.
\end{proof}

\begin{prop}
Suppose the assumption of Thm \ref{main} are satisfied, then we have the uniform upper and lower bound of $u(t)$, that is
\[\sup_Mu(t)\leq C,\quad \inf_M u(t)\geq C^{-1}\]
here $C$ is a positive constant independent of $t$.
\end{prop}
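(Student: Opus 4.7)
The plan is to obtain the uniform upper bound through a Moser-type iteration on the equation $P_\gamma^{g_0}u = R\,u^{(n+2\gamma)/(n-2\gamma)}$, exploiting the uniform small-energy property of $R$ from Proposition \ref{prop:3.6}, and then to derive the lower bound from the fractional Harnack inequality already used in Proposition \ref{prop:Brendle-2.4}. Fix $\epsilon_0 > 0$ (to be chosen sufficiently small), and let $r > 0$, $q > n/(2\gamma)$ be as in Proposition \ref{prop:3.6}, so that
\[\int_{B_r(x)} |R(g(t))|^q\, d\mu(t) \leq \epsilon_0 \quad \text{for all } x\in M,\ t \geq 0.\]

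For the upper bound, I would test the extension problem (\ref{eq:extension}) satisfied by $U$ (the extension of $u$) against $U^{\alpha}\eta^2$ for a cutoff $\eta$ supported in $B_r(x)$ and an exponent $\alpha > 1$. Using the Stroock--Varopoulos inequality (as in Lemma~\ref{lem:Brendle-lem2.5}) on the left-hand side together with the fractional Sobolev inequality controlled by $Y_\gamma(M,[g]) > 0$, and estimating the right-hand side via H\"older's inequality in the form
\[\int_{B_r(x)} R\, u^{\frac{n+2\gamma}{n-2\gamma}+\alpha}\eta^2\,d\mu_0
    \leq \|R\|_{L^q(B_r(x),d\mu)}\,\bigl\|u^{(\alpha+1)/2}\eta\bigr\|_{L^{2n/(n-2\gamma)}}^{2}\,\bigl\|u\bigr\|_{L^{\ast}(B_r)}^{\ast},\]
the critical term becomes absorbable into the left-hand side provided $\epsilon_0$ is small. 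This yields the reverse H\"older chain
\[\|u\|_{L^{\chi q_k}(B_{r_k}(x))} \leq (C\chi^k)^{1/q_k}\|u\|_{L^{q_k}(B_{r_{k-1}}(x))},\qquad \chi = \frac{n}{n-2\gamma},\]
which upon iteration gives
\[\sup_{B_{r/2}(x)} u \leq C\,\|u\|_{L^{2n/(n-2\gamma)}(B_r(x))} \leq C\,\|u\|_{L^{2n/(n-2\gamma)}(M)} = C,\]
the last equality being the volume preservation $\int_M u^{2n/(n-2\gamma)}\,d\mu_0 = 1$. Covering $M$ by finitely many balls of radius $r$ gives the uniform upper bound $\sup_M u(t) \leq C$.

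For the lower bound, once $\sup_M u \leq C$ is known, the right-hand side $R\,u^{(n+2\gamma)/(n-2\gamma)}$ is uniformly bounded in $L^{p}(M,d\mu_0)$ for some $p > n/(2\gamma)$ thanks to Lemma~\ref{lem:Brendle-lem2.5}. One may then invoke \cite[Lemma 4.9]{Cabre2014} exactly as in Proposition~\ref{prop:Brendle-2.4}, but now with a Harnack constant that depends only on this uniform-in-$t$ $L^p$ bound, yielding $\inf_M u(t) \geq C^{-1}\sup_M u(t)$. Since volume preservation together with H\"older's inequality forces $\sup_M u(t) \geq 1$, we conclude $\inf_M u(t) \geq C^{-1}$ with a constant independent of $t$. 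The main obstacle is the Moser iteration in the nonlocal setting: one must carry the cutoff through the extension problem and absorb the critical term $R\,u^{(n+2\gamma)/(n-2\gamma)}$, which is only possible because Proposition~\ref{prop:3.6} provides the localized smallness of $\|R\|_{L^q(B_r)}$ that no global $L^q$ bound on $R$ alone could furnish.
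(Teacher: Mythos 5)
Your overall strategy — localize via the small-energy property of Proposition \ref{prop:3.6}, run a weighted Moser iteration on the extension to get $\sup_M u\leq C$, then invoke the Harnack inequality of \cite{Cabre2014} together with $\sup_M u\geq1$ (from volume preservation) for the lower bound — is the same strategy the paper follows. The paper's displayed proof is essentially a verification of hypotheses: it packages the entire iteration into Proposition~\ref{prop:uni_upper} in the appendix (checking $\int_M d\mu_{g(t)}=1$, $\int_M u\Pgo u\,d\mu_0\leq s_0$, and the local $L^q$ smallness of $R$), whereas you sketch the iteration inline.

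There is, however, a real gap in the inline iteration as you wrote it. Your three-factor H\"older estimate is correct and the extra factor $\|u\|_{L^*}^*$ does reduce to a power of $\int_M u^{2n/(n-2\gamma)}d\mu_0=1$, so the critical term is indeed controlled by $\epsilon_0^{1/q}\|u^{(\alpha+1)/2}\eta\|_{L^{2n/(n-2\gamma)}}^2$. But the coercive constant produced by Stroock--Varopoulos (or, on the extension, by testing with $U^{\alpha}\eta^2$) is of order $\alpha/(\alpha+1)^2$ and so degenerates as $\alpha\to\infty$: the absorption condition becomes $\epsilon_0^{1/q}\lesssim (\alpha+1)^{-2}Y_\gamma$, which no single choice of $\epsilon_0$ can satisfy for all $\alpha$. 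Consequently, ``which upon iteration gives $\sup_{B_{r/2}}u\leq C$'' does not follow directly from a reverse H\"older chain run to infinity; one must either (i) perform only finitely many nonlinear Moser steps to reach $u\in L^{p_0}_{\loc}$ for some $p_0>\frac{2n}{n-2\gamma}$, then observe that $V:=Ru^{4\gamma/(n-2\gamma)}\in L^{\tilde q}(d\mu_0)$ locally for some $\tilde q>\frac{n}{2\gamma}$ and finish with the \emph{linear} Moser--Harnack estimate for $P_\gamma u=Vu$, or (ii) use the Schoen-type scaling argument the paper implements in the proof of Proposition~\ref{prop:uni_upper}, where one locates a near-extremal point, picks the best scale, and closes the inequality $s^{(n-2\gamma)/2}U(x_0)\leq K+K(s^{(n-2\gamma)/2}U(x_0))^{\theta}\|R\|_{L^q(d\mu_g)}^{1/q}$ using the smallness of $\eta_0$. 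You should also note explicitly that the uniform $H^\gamma$ energy bound $\int_M u\Pgo u\,d\mu_0\leq s_0$ is needed alongside volume preservation to bound the weighted bulk integrals $\int_{B^+_{2r}}\rho^{1-2\gamma}U^2$ that seed the iteration (this is Lemma~\ref{lem:A-ineq}). The lower-bound argument is fine.
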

\begin{proof}
We will need Proposition \ref{prop:uni_upper} and verify its assumption is satisfied. Since our flow is volume preserving, then
\[\int_M d\mu_{g(t)}=1\]
and Lemma \ref{lem:Rt} implies that
\[\int_M R(g(t))d\mu_{g(t)}=\int_M u(t)\Pgo u(t)d\mu_0\leq s_0.\]
Now Proposition \ref{prop:3.6} means that we can find a uniform radius for any point $x\in M$ and $t>0$. Therefore we can arrive at an uniform upper bound of $u$ by Proposition \ref{prop:uni_upper}. For the lower bound of $u$, it is just a consequence of the Harnack inequality of  \cite{Cabre2014}.

\end{proof}



Our next goal is to prove Proposition \ref{prop:4.16} for the two cases.
\section{The compact case}\label{sec:compact}
In this case we have $u_\infty>0$. We first need a spectral decomposition with respect to weighted eigenfunctions of $\Pgo$.
\begin{prop}
There exist sequences $\{\psi_a\}_{a\in\mathbb{N}}\subset C^{\infty}(M)$ and $\{\lambda_a\}_{a\in\mathbb{N}}\subset \R$, with $\lambda_a>0$, satisfying:

(i) For all $a\in\N$,
\begin{equation}\notag
\Pgo\psi_a=\lambda_a u_\infty^{\frac{4\gamma}{n-2\gamma}}\psi_a\,,\textin M.
\end{equation}

(ii) For all $a,b\in\N$,
\[
\int_{M}\psi_a\psi_bu_{\infty}^{\frac{4\gamma}{n-2\gamma}}d\mu_0=
\begin{cases}
1\,,&\text{if}\:a=b\,,
\\
0\,,&\text{if}\:a\neq b\,.
\end{cases}
\]

(iii) The span of $\{\psi_a\}_{a\in\N}$ is dense in $L^2(M)$.

(iv) We have $\lim_{a\to\infty}\lambda_a=\infty$.
\end{prop}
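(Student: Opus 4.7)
The plan is to reduce the weighted eigenvalue problem for $P_\gamma^{g_0}$ to a standard spectral problem for $P_\gamma^{g_\infty}$ in the conformal metric $g_\infty=u_\infty^{4/(n-2\gamma)}g_0$, which is a smooth Riemannian metric since $u_\infty\in C^\infty(M)$ is uniformly positive in the compact case (the $C^\infty$ regularity follows from bootstrapping the limit equation \eqref{eq:uinf-main}, and positivity from the strong maximum principle cited just before the statement). By the conformal covariance \eqref{eq:conformalP}, the substitution $\psi_a=u_\infty\phi_a$ converts the target identity
\[P_\gamma^{g_0}\psi_a=\lambda_a u_\infty^{4\gamma/(n-2\gamma)}\psi_a\]
into $P_\gamma^{g_\infty}\phi_a=\lambda_a\phi_a$ on $(M,g_\infty)$, using $1+\tfrac{4\gamma}{n-2\gamma}=\tfrac{n+2\gamma}{n-2\gamma}$. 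A direct computation yields $u_\infty^{4\gamma/(n-2\gamma)}\,d\mu_0=u_\infty^{-2}\,d\mu_{g_\infty}$, so the weighted orthonormality (ii) becomes exactly $L^2(M,d\mu_{g_\infty})$-orthonormality for the $\phi_a$, and (iii) is equivalent to density of $\{\phi_a\}$ in $L^2(M,d\mu_{g_\infty})$, which is equivalent to density in $L^2(M)$ since $u_\infty$ has positive upper and lower bounds.

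Next I set up spectral theory for $P_\gamma^{g_\infty}$ through the bilinear form
\[a(\phi,\eta)=\int_{M}\phi\,P_\gamma^{g_\infty}\eta\,d\mu_{g_\infty}\]
on $H^\gamma(M)$, which is symmetric by Proposition \ref{lem:byparts}. Applying the improved extension \eqref{eq:extension} relative to $g_\infty$ (for which $R_\gamma^{g_\infty}\equiv s_\infty$) gives
\[a(\phi,\phi)=c_\gamma\int_{X}\rho_\infty^{1-2\gamma}|\nabla\Phi|^2_{\bar g_\infty}\,d\bar\mu_{g_\infty}+s_\infty\int_{M}\phi^2\,d\mu_{g_\infty},\]
where $\Phi$ extends $\phi$. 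Since $u_\infty>0$ solves \eqref{eq:uinf-main} and $E(u_\infty)\geq Y_\gamma(M,[g])>0$, multiplying \eqref{eq:uinf-main} by $u_\infty$ and integrating shows $s_\infty>0$, so $a$ is coercive on $H^\gamma(M)$ with lower bound $s_\infty\|\phi\|^2_{L^2(d\mu_{g_\infty})}$. Lax--Milgram then produces a bounded self-adjoint inverse $K:L^2(M,d\mu_{g_\infty})\to H^\gamma(M)$, and the compact embedding $H^\gamma(M)\hookrightarrow L^2(M)$ on the closed manifold (fractional Rellich--Kondrachov) makes $K$ compact and self-adjoint on $L^2(M,d\mu_{g_\infty})$. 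The spectral theorem for compact self-adjoint operators then delivers a complete $L^2(M,d\mu_{g_\infty})$-orthonormal basis of eigenfunctions $\phi_a$ with eigenvalues $\mu_a=\lambda_a^{-1}\to0^+$, each satisfying $\lambda_a\geq s_\infty>0$, yielding (i)--(iv) after the substitution $\psi_a=u_\infty\phi_a$.

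Smoothness of the $\psi_a$ is the last item: each $\phi_a\in H^\gamma(M)$ is a weak solution of the linear nonlocal equation $P_\gamma^{g_\infty}\phi_a=\lambda_a\phi_a$ with smooth coefficients, and a standard elliptic bootstrap through the degenerate extension problem (or, equivalently, pseudodifferential regularity for the elliptic order-$2\gamma$ operator $P_\gamma^{g_\infty}$ on the closed manifold $M$) upgrades $\phi_a$ to $C^\infty(M)$. The main technical obstacle is the transfer of the local spectral-theoretic ingredients --- self-adjointness, coercivity, Rellich compactness, interior regularity --- to the nonlocal conformal operator $P_\gamma^{g_\infty}$; these are delivered respectively by Proposition \ref{lem:byparts}, the sign $s_\infty>0$ via the improved extension, the fractional Sobolev embedding on closed manifolds, and pseudodifferential regularity. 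The conformal reduction is what makes all these ingredients land cleanly in a classical spectral framework.
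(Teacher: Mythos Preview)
Your proof is correct and rests on the same compact-operator spectral skeleton as the paper: build the inverse, get compactness from the fractional Rellich embedding $H^\gamma(M)\hookrightarrow L^2(M)$, get self-adjointness from Proposition~\ref{lem:byparts}, and invoke the spectral theorem. The organizing principle differs, however. The paper stays in the metric $g_0$ and defines $T(f)=u$ as the solution of $\Pgo u=fu_\infty^{4\gamma/(n-2\gamma)}$, then checks that $T$ is symmetric with respect to the weighted inner product $(\psi_1,\psi_2)\mapsto\int_M\psi_1\psi_2 u_\infty^{4\gamma/(n-2\gamma)}\,d\mu_0$. You instead conformally change to $g_\infty=u_\infty^{4/(n-2\gamma)}g_0$ via $\psi_a=u_\infty\phi_a$, so that the problem becomes the unweighted eigenvalue problem $P_\gamma^{g_\infty}\phi_a=\lambda_a\phi_a$ and the weighted inner product becomes the ordinary $L^2(d\mu_{g_\infty})$ one. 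Your route makes the coercivity immediate (it reduces to $R_\gamma^{g_\infty}\equiv s_\infty>0$) and explains conceptually why the weight $u_\infty^{4\gamma/(n-2\gamma)}$ is the natural one; the paper's route skips the conformal bookkeeping at the cost of having to spot the correct weighted inner product by hand. You also spell out $\lambda_a\to\infty$ and the $C^\infty$ regularity of the eigenfunctions more explicitly than the paper, which leaves these implicit in the phrase ``the result follows from the spectral theorem.''
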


\begin{proof}
Since we are assuming $R_{g_0}>0$, for each $f\in L^2(M)$ we can define $T(f)=u$, where $u\in H^{\gamma}(M)$ is the unique solution of
\[
\Pgo u=f u_{\infty}^\frac{4\gamma}{n-2\gamma}\textin\:M\,,
\]
It has been proved in \cite{QG} that the first eigenvalue of the operator is positive, hence
\[\int_M u P^{g_0}_\gamma u \,d\mu_0\]
defines an (equivalent) norm in $H^\gamma(M)$ which is compactly embedded in $L^2(M)$, and the operator $T:L^2( M)\to L^2(M)$ is compact. Integrating by parts, we see that $T$ is symmetric with respect to the inner product
\begin{equation}\label{eq:ip}
(\psi_1,\psi_2)
\mapsto
\int_{M}\psi_1\psi_2u_{\infty}^{\frac{4\gamma}{n-2\gamma}}\,d\mu_0.
\end{equation}
Then the result follows from the spectral theorem for compact operators.
\end{proof}

\begin{cor}\label{cor:CS}
For any $u,v\in{H^\gamma}(M)$, we have
\[\int_{M}u\Pgo{v}\,d\mu_0
    \leq\norm[H^\gamma]{u}\norm[H^\gamma]{v}.\]
\end{cor}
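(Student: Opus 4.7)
The claim is a Cauchy--Schwarz inequality for the symmetric bilinear form
\[B(u,v):=\int_M u\,P_\gamma^{g_0}v\,d\mu_0\]
on $H^\gamma(M)$. My plan is to verify that $B$ is symmetric and positive semidefinite on $H^\gamma(M)$, and then conclude by the standard two-line Cauchy--Schwarz argument.

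Symmetry is immediate: $B(u,v)=B(v,u)$ by Proposition \ref{lem:byparts}. Positivity follows from the fact that the first eigenvalue of $P_\gamma^{g_0}$ is positive (invoked in the proof of the preceding proposition and established in \cite{QG}), so that $B(u,u)=\int_M u\,P_\gamma^{g_0}u\,d\mu_0\geq 0$. Under the standing assumption $Y_\gamma(M,[g])>0$, the quantity $\sqrt{B(u,u)}$ is moreover an equivalent norm on $H^\gamma(M)$, which is the $\|\cdot\|_{H^\gamma}$ that appears in the statement.

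Given the two properties above, for any $t\in\R$ I would expand
\[0\leq B(u-tv,u-tv)=\|u\|_{H^\gamma}^2-2t\,B(u,v)+t^2\,\|v\|_{H^\gamma}^2,\]
and then conclude (either by optimizing in $t$ or by noting that the discriminant of this nonnegative quadratic is nonpositive) that $|B(u,v)|\leq\|u\|_{H^\gamma}\|v\|_{H^\gamma}$, which is exactly the estimate required.

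Alternatively, one could use the weighted eigenbasis $\{\psi_a\}$ from the preceding proposition, expanding $u=\sum_a u_a\psi_a$ and $v=\sum_a v_a\psi_a$, applying the eigenvalue identity and the weighted orthonormality to obtain $B(u,v)=\sum_a \lambda_a u_a v_a$, and then invoking the discrete Cauchy--Schwarz inequality to bound this by $\bigl(\sum_a \lambda_a u_a^2\bigr)^{1/2}\bigl(\sum_a \lambda_a v_a^2\bigr)^{1/2}$. In either approach there is no essential obstacle; the only point requiring care is the (standard) identification of $\|\cdot\|_{H^\gamma}$ with the norm induced by $P_\gamma^{g_0}$, which is justified by the remarks above.
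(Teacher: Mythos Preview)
Your proposal is correct. The paper's proof is exactly your ``alternative'' approach: it expands $u=\sum_a\mu_a\psi_a$, $v=\sum_b\nu_b\psi_b$ in the weighted eigenbasis, computes $\int_M u\,P_\gamma^{g_0}v\,d\mu_0=\sum_a\lambda_a\mu_a\nu_a$, and applies the discrete Cauchy--Schwarz inequality, identifying $\|u\|_{H^\gamma}^2=\sum_a\lambda_a\mu_a^2$.

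Your primary argument via the abstract Cauchy--Schwarz inequality for a symmetric positive-semidefinite bilinear form is a slightly cleaner route: it uses only the symmetry from Proposition~\ref{lem:byparts} and the positivity $B(u,u)\geq 0$, without invoking the spectral decomposition at all. The paper's version has the minor advantage of making the identification $\|u\|_{H^\gamma}^2=\sum_a\lambda_a\mu_a^2$ explicit, which clarifies what norm is meant; your version needs the remark (which you correctly include) that $\|\cdot\|_{H^\gamma}$ is the norm induced by $P_\gamma^{g_0}$. Either way the content is the same.
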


\begin{proof}
This follows directly from the eigenfunction expansion. If $u=\sum_{a}\mu_a\psi_a$ and $v=\sum_{b}\nu_b\psi_b$, then
\[\begin{split}
\int_{M}u\Pgo{v}\,d\mu_0
&=\int_{M}\sum_{a,b}\mu_a\psi_a
    \cdot\nu_b\lambda_{b}u_\infty^{\frac{4\gamma}{n-2\gamma}}\psi_b\,d\mu_0\\
&=\sum_{a,b}\lambda_{b}\mu_a\nu_b\delta_{ab}\\
&\leq\sqrt{\sum_{a}\lambda_a\mu_a^2}\sqrt{\sum_{a}\lambda_a\nu_a^2}\\
&=\norm[H^\gamma]{u}\norm[H^\gamma]{v}.
\end{split}\]
\end{proof}

Our next goal is show the coercivity in $H^\gamma(M,g_0)$ of the second variation operator of the Yamabe functional
at certain error $w_k$ (defined below in \eqref{eq:w_k}). This is the content of Proposition \ref{Propo6.9} and requires a projection onto a finite dimensional subspace that we now introduce.

Let $A\subset \N$ be a finite set such that $\lambda_a>\frac{n+2\gamma}{n-2\gamma}s_\infty$ for all $a\notin A$, and define the projection
\[
\Pi(f)
=\sum_{a\notin A}\left(\int_{ M}\psi_af d\mu_0\right)\psi_au_\infty^{\frac{4\gamma}{n-2\gamma}}
=f-\sum_{a\in A}\left(\int_{ M}\psi_af d\mu_0\right)\psi_a\uinf^{\frac{4\gamma}{n-2\gamma}}\,.
\]

Note that this definition facilitates the computations for the lemma below and is not the canonical projection with respect to the inner product defined in \eqref{eq:ip}, which would read
\[\tilde{\Pi}(f)
=\sum_{a\notin{A}}
    \left(
        \int_{M}\psi_{a}fu_{\infty}^{\frac{4\gamma}{n-2\gamma}}\,d\mu_0
    \right)\psi_a.
\]
We are going to construct functions $\bar{u}_z$, which are perturbations of $\uinf$ in a finite dimensional subspace, and whose derivatives satisfy nice orthogonality conditions. 

\begin{lem}\label{Lemma6.4}
There exists $\zeta>0$ with the following significance: for all $z=(z_1,...,z_{|A|})\in\R^{|A|}$ with $|z|\leq \zeta$, there exists a smooth function $\bar{u}_z$ satisfying,
\begin{equation}\label{Lemma6.4:1}
\int_{M}\uinf^{\frac{4\gamma}{n-2\gamma}}(\bar{u}_z-\uinf)\psi_a \,d\mu_0=z_a
\:\:\:\:\text{for all}\quad\:a\in A\,,
\end{equation}
and
\begin{equation}\label{Lemma6.4:2}
\Pi\left(\Pgo\bar{u}_z-s_{\infty}\bar{u}_{z}^{\frac{n+2\gamma}{n-2\gamma}}\right)=0\,.
\end{equation}
Moreover, the mapping $z\mapsto \bar{u}_z$ is real analytic.

As a result,
\end{lem}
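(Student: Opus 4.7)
The plan is to apply the analytic implicit function theorem after parametrizing $\bar u_z$ along a finite-dimensional ``slow'' direction and solving for a ``fast'' correction. I would seek $\bar u_z$ in the form
\[
\bar u_z = \uinf + \tilde z + w, \qquad \tilde z := \sum_{a\in A} z_a \psi_a,
\]
with $w$ in the closed subspace $W$ of $C^{2\gamma+\alpha}(M)$ consisting of functions orthogonal to $\mathrm{span}\{\psi_a : a\in A\}$ in the weighted inner product $(\phi_1,\phi_2) \mapsto \int_M \phi_1 \phi_2\, \uinf^{4\gamma/(n-2\gamma)}\, d\mu_0$. The orthonormality of $\{\psi_a\}_a$ in this inner product makes \eqref{Lemma6.4:1} automatic: pairing $\bar u_z - \uinf$ against $\psi_a \uinf^{4\gamma/(n-2\gamma)}$ picks out exactly $z_a$ and kills the $w$-contribution.

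To enforce \eqref{Lemma6.4:2}, define
\[
\mathcal F: \R^{|A|} \times W \longrightarrow \Pi\bigl(C^{\alpha}(M)\bigr), \qquad \mathcal F(z,w) := \Pi\Bigl(\Pgo \bar u_z - s_\infty \bar u_z^{\frac{n+2\gamma}{n-2\gamma}}\Bigr).
\]
Since $\uinf$ solves \eqref{eq:uinf-main}, $\mathcal F(0,0)=0$, and the uniform positivity of $\uinf$ (from the strong maximum principle lemma above) makes $t\mapsto t^{(n+2\gamma)/(n-2\gamma)}$ real analytic in a neighborhood of $\uinf$, so $\mathcal F$ is a real-analytic map between Banach spaces near $(0,0)$. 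The crux is the linearization $D_w\mathcal F(0,0)$: for a basis vector $\psi_b$ with $b\notin A$, the weighted eigenvalue equation gives
\[
D_w\mathcal F(0,0)\psi_b = \Pi\!\left(\lambda_b \uinf^{\frac{4\gamma}{n-2\gamma}}\psi_b - \tfrac{n+2\gamma}{n-2\gamma}s_\infty \uinf^{\frac{4\gamma}{n-2\gamma}}\psi_b\right) = \Bigl(\lambda_b - \tfrac{n+2\gamma}{n-2\gamma}s_\infty\Bigr)\, \uinf^{\frac{4\gamma}{n-2\gamma}} \psi_b,
\]
where I used that $\Pi$ fixes $\uinf^{4\gamma/(n-2\gamma)}\psi_b$ for $b\notin A$ (an immediate computation from the orthonormality relation). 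By the defining choice of $A$, the coefficients $\lambda_b-\tfrac{n+2\gamma}{n-2\gamma}s_\infty$ are uniformly bounded away from $0$ and tend to $+\infty$, so $D_w\mathcal F(0,0)$ is diagonal with bounded inverse and hence an isomorphism onto $\Pi(C^{\alpha})$.

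The analytic implicit function theorem then supplies $\zeta>0$ and a unique real-analytic map $z\mapsto w(z)\in W$ with $w(0)=0$ and $\mathcal F(z,w(z))\equiv 0$ for $|z|\leq \zeta$; setting $\bar u_z:=\uinf+\tilde z+w(z)$ yields both \eqref{Lemma6.4:1} and \eqref{Lemma6.4:2}, with the advertised real-analytic dependence on $z$. Smoothness of $\bar u_z$ on $M$ follows by standard elliptic bootstrapping applied to the equation $\Pgo \bar u_z - s_\infty \bar u_z^{(n+2\gamma)/(n-2\gamma)} = \sum_{a\in A} c_a(z)\, \uinf^{4\gamma/(n-2\gamma)}\psi_a$, whose right-hand side is smooth. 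The main obstacle I foresee is the bookkeeping of a Banach space framework in which the power nonlinearity is genuinely analytic and $\Pgo$ is bounded; working in a Hölder scale adapted to $\Pgo$ sidesteps this technicality, after which the argument reduces to the linear spectral computation above.
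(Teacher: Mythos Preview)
Your proposal is correct and is exactly the approach the paper intends: its entire proof reads ``This is just an application of the implicit function theorem and a standard argument to reach real analyticity,'' and you have supplied a faithful implementation of that sentence, including the correct spectral computation showing $D_w\mathcal F(0,0)$ is invertible on the complement of $\mathrm{span}\{\psi_a:a\in A\}$.
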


\begin{proof}
This is just an application of the implicit function theorem and a standard argument to reach real analyticity.
\end{proof}
\begin{lem}\label{Lemma6.5}
There exists $0<\delta<1$ such that
\[
E(\bar{u}_z)-E(\uinf)
\leq
C\sup_{a\in A}\left|
\int_{ M}\psi_a\left(\Pgo\bar{u}_z-s_\infty\,\bar{u}_z^{\frac{n+2\gamma}{n-2\gamma}}
\right)d\mu_0
\right|^{1+\delta}\,,
\]
if $|z|$ is sufficiently small.
\end{lem}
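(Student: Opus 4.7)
The plan is to recognize the claim as a finite-dimensional \L{}ojasiewicz gradient inequality for the real analytic function $\phi(z):=E(\bar u_z)-E(u_\infty)$ near its critical point $z=0\in\R^{|A|}$, combined with an algebraic identification of $\nabla_z\phi$ with the residuals $g_a(z):=\int_M\psi_a(\Pgo\bar u_z-s_\infty\bar u_z^{(n+2\gamma)/(n-2\gamma)})\,d\mu_0$.

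First, I will check analyticity and criticality. Since $z\mapsto\bar u_z$ is real analytic by Lemma~\ref{Lemma6.4} and $\bar u_z>0$ uniformly for $|z|$ small, $\phi$ is real analytic on a neighborhood of $0$ with $\phi(0)=0$. The first variation
\[dE|_u(v)=\frac{2}{(\int_M u^q\,d\mu_0)^{(n-2\gamma)/n}}\int_M v\bigl[\Pgo u-s(u)u^p\bigr]\,d\mu_0,\]
with $q=\frac{2n}{n-2\gamma}$, $p=q-1$, and $s(u)=E(u)(\int u^q)^{-2\gamma/n}$, vanishes at $u_\infty$ by \eqref{eq:uinf-main}, so $\nabla_z\phi(0)=0$. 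The classical \L{}ojasiewicz gradient inequality then provides $\theta\in(0,1/2]$ and $C,r>0$ with $|\phi(z)|^{1-\theta}\leq C|\nabla_z\phi(z)|$ for $|z|<r$, or equivalently $|\phi(z)|\leq C'|\nabla_z\phi(z)|^{1+\delta}$ with $\delta=\theta/(1-\theta)\in(0,1]$; shrinking $\delta$ if necessary I may assume $\delta\in(0,1)$.

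Second, I will relate $\nabla_z\phi$ to the $g_a$. From the constraint \eqref{Lemma6.4:2}, the spectral expansion coming from the orthonormality in (ii), and the definition of $\Pi$, the residual lies in the finite-dimensional subspace spanned by $\{\psi_b\uinf^{4\gamma/(n-2\gamma)}\}_{b\in A}$, namely $\Pgo\bar u_z-s_\infty\bar u_z^p=\sum_{b\in A}g_b(z)\,\psi_b\uinf^{4\gamma/(n-2\gamma)}$. Differentiating \eqref{Lemma6.4:1} in $z_a$ gives $\int_M\partial_{z_a}\bar u_z\cdot\psi_b\uinf^{4\gamma/(n-2\gamma)}\,d\mu_0=\delta_{ab}$ for $a,b\in A$. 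Substituting into the formula for $dE$ and splitting $s(\bar u_z)=s_\infty+(s(\bar u_z)-s_\infty)$, I get
\[\partial_{z_a}\phi(z)=\frac{2}{(\int\bar u_z^q)^{(n-2\gamma)/n}}\left[g_a(z)+\bigl(s_\infty-s(\bar u_z)\bigr)\int_M\partial_{z_a}\bar u_z\cdot\bar u_z^p\,d\mu_0\right].\]
Pairing the previous display against $\bar u_z$ and using $p+1=q$ yields $(s(\bar u_z)-s_\infty)\int\bar u_z^q=\sum_bg_b(z)\int\bar u_z\psi_b\uinf^{4\gamma/(n-2\gamma)}\,d\mu_0$, so $|s_\infty-s(\bar u_z)|\leq C\sup_b|g_b(z)|$. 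Combined with the uniform bound on $\int\partial_{z_a}\bar u_z\cdot\bar u_z^p\,d\mu_0$ for $|z|$ small, I conclude $|\nabla_z\phi(z)|\leq C\sup_{a\in A}|g_a(z)|$, and inserting into the \L{}ojasiewicz inequality finishes the proof.

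The main obstacle is the last identity for $\partial_{z_a}\phi$: the operator $\Pi$ used in Lemma~\ref{Lemma6.4} is not the orthogonal projection for the weighted inner product \eqref{eq:ip}, so the algebra is slightly non-self-adjoint and requires combining the spectral expansion from (ii) with the normalization \eqref{Lemma6.4:1} carefully. The \L{}ojasiewicz inequality itself is a classical black box, applicable precisely because Lemma~\ref{Lemma6.4} has already reduced everything to a finite-dimensional real analytic problem.
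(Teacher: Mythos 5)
Your proof is correct and follows essentially the same route as the paper: it invokes the finite-dimensional Łojasiewicz inequality for the real-analytic map $z\mapsto E(\bar u_z)$, and it uses the two constraints of Lemma~\ref{Lemma6.4} together with the spectral orthogonality in (ii) to convert the first variation $\partial_{z_a}E(\bar u_z)$ into a bounded multiple of $g_a(z)=\int_M\psi_a(\Pgo\bar u_z-s_\infty\bar u_z^{(n+2\gamma)/(n-2\gamma)})\,d\mu_0$. The only difference is cosmetic bookkeeping — you bound $|s(\bar u_z)-s_\infty|$ by $\sup_b|g_b|$ as a separate step, whereas the paper substitutes the residual expansion directly into the second term of $\partial_{z_a}E$; both land on the same estimate $\sup_a|\partial_{z_a}E(\bar u_z)|\leq C\sup_a|g_a(z)|$.
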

\begin{proof}
Observe that the function $z\mapsto E(\bar{u}_z)$ is real analytic. According to results of {\L}ojasiewicz (see equation (2.4) in \cite[p.538]{Simon1983}), there exists $0<\delta<1$ such that
\[
|E(\bar{u}_z)-E(\uinf)|\leq \sup_{a\in A}\left|\frac{\partial}{\partial z_a}E(\bar{u}_z)\right|^{1+\delta}\,,
\]
if $|z|$ is sufficiently small. Now we can follow the lines in \cite[Lemma 6.5]{brendle1} to calculate the partial derivative of the function $z\mapsto E(\bar u_z)$,
\[
\begin{split}
    \frac{\partial}{\partial z_a}E(\bar{u}_z)
    &=2\dfrac{\displaystyle\int_{M}
        \left(
            \Pgo\bar{u}_z-s_\infty\bar{u}_z^{\frac{n+2\gamma}{n-2\gamma}}
        \right)\tilde{\psi}_{a,z}\,d\mu_0}
        {\left(\displaystyle\int_{M}
            \bar{u}_z^{\frac{2n}{n-2\gamma}}\,d\mu_0
        \right)^{\frac{n-2\gamma}{n}}}\\
    &\quad\;-2\left(
        \dfrac{\displaystyle\int_{M}
            \bar u_z\Pgo\bar u_z \,d\mu_0}
        {\displaystyle\int_{M}\bar{u}_z^{\frac{2n}{n-2\gamma}}
            \,d\mu_0}-s_\infty
        \right)
        \dfrac{\displaystyle\int_M \bar u_z^{\frac{n+2\gamma}{n-2\gamma}}\tilde{\psi}_{a,z}\,d\mu_0}
        {\left(\displaystyle\int_M \bar u_z^{\frac{2n}{n-2\gamma}}\,d\mu_0\right)^{\frac{n-2\gamma}{n}}},
\end{split}
\]
where $\tilde{\psi}_{a,z}:=\frac{\partial}{\partial z_a}\bar u_z$ for $a\in A$. According to \eqref{Lemma6.4:1} and \eqref{Lemma6.4:2}, we know that $\tilde{\psi}_{a,z}$ satisfies
\[\int_M \uinf^{\frac{4\gamma}{n-2\gamma}}\tilde{\psi}_{a,z}\psi_b\,d\mu_0
=\begin{cases}
1\quad& a=b,\\
0\quad& a\neq b,
\end{cases}\]
for $b\in A$ and $\Pi\left(\Pgo \tilde{\psi}_{a,z}-s_\infty \bar{u}_{z}^{\frac{4\gamma}{n-2\gamma}}\tilde{\psi}_{a,z}\right)=0$. Moreover, \eqref{Lemma6.4:2} implies
\[\Pgo\bar{u}_z-s_{\infty}\bar{u}_{z}^{\frac{n+2\gamma}{n-2\gamma}}
    =\sum_{b\in{A}}
        \left(
            \int_{M}\left(\Pgo\bar{u}_z-s_{\infty}\bar{u}_{z}^{\frac{n+2\gamma}{n-2\gamma}}\right)
                \psi_{b}u_{\infty}^{\frac{4\gamma}{n-2\gamma}}\,d\mu_0
        \right)\psi_b.\]
We therefore obtain
\[\begin{split}
    \frac{\partial}{\partial z_a}E(\bar u_z)
    &=2\dfrac{\displaystyle\int_{M}
        \left(\Pgo \bar{u}_z
            -s_{\infty}\bar{u}_z^{\frac{n+2\gamma}{n-2\gamma}}
        \right)\psi_a\,d\mu_0}
        {\left(\displaystyle\int_M \bar u_z^{\frac{2n}{n-2\gamma}}\,d\mu_0\right)^{\frac{n-2\gamma}{n}}}\\
    &\quad\;+2\sum_{b\in A}\dfrac{
            \displaystyle\int_{M}\left(\Pgo\bar u_z -s_\infty\bar u_z^{\frac{n+2\gamma}{n-2\gamma}} \right)\psi_b\,d\mu_0
            \int_{M}
            u_\infty^{\frac{4\gamma}{n-2\gamma}}\bar u_z\psi_{b}\,d\mu_0}
        {\displaystyle\int_{M}\bar u_z^{\frac{2n}{n-2\gamma}}\,d\mu_0}\\
    &\quad\qquad\qquad\cdot
        \dfrac{\displaystyle\int_M \bar u_z^{\frac{n+2\gamma}{n-2\gamma}}\tilde{\psi}_{a,z}\,d\mu_0}
        {\left(\displaystyle\int_{M}\bar{u}_z^{\frac{2n}{n-2\gamma}}
            \,d\mu_0\right)^{\frac{n-2\gamma}{n}}},
\end{split}\]
for all $a\in A$. Then the bounds for $u_{\infty}$ and $\bar{u}_z$ yield
\[\sup_{a\in A}\left|\frac{\partial}{\partial z_a}E(\bar u_z)\right|\leq C\sup_{a\in A}\left|\int_M\psi_a\left(\Pgo\bar u_z-s_\infty\bar u_z^{\frac{n+2\gamma}{n-2\gamma}}\right)d\mu_0\right|.\]
From this, the lemma follows.
\end{proof}

For any $k\in\mathbb{N}$, we consider the best approximation in $H^\gamma(M)$ of $u_k=u(t_k)$ among the family $\set{\bar{u}_z}$.
More precisely, we choose $z_k$ with $|z_k|\leq\zeta$ such that
\begin{align*}
    \int_M(\bar u_{z_k}-u_k)\Pgo(\bar u_{z_k}-u_k)\,d\mu_0
    =\min_{|z|\leq\zeta}
        \int_M (\bar u_z-u_k)\Pgo(\bar u_z-u_k)\,d\mu_0
\end{align*}
By \eqref{eq:profileconv}, we have $u_{k}\to u_\infty$ in $H^\gamma(M)$. As $\bar{u}_{0}=\uinf$, this implies that $z_k\to 0$ as $k\to \infty$. One can decompose
\begin{equation}\label{eq:w_k}
u_k=\bar u_{z_k}+w_k,
\end{equation}
such that
\[\norm[H^\gamma]{w_k}
    \to0\quad\textas{k\to\infty}.\]

It also follows from the variational properties of $\bar u_{z_k}$ that
\[\int_M \Pgo(w_k)\tilde{\psi}_{a,z_k}\,d\mu_0=0,\]
for any $a\in A$.
Again noticing the fact that $z_k\to0$ as $k\to\infty$ one can deduce, via Corollary \ref{cor:CS}, that
\begin{equation}\begin{split}\label{eq:orth_wk}
\lambda_a\int_M\uinf^{\frac{4\gamma}{n-2\gamma}}\psi_a\,w_k\,d\mu_0
&=\int_{M}w_k\Pgo\psi_a\,d\mu_0\\
&=\int_{M}(\psi_a-\tilde{\psi}_{a,z_k})\Pgo{w_k}\,d\mu_0\\
&\leq\norm[H^\gamma]{\psi_a-\tilde{\psi}_{a,z_k}}
    \norm[H^\gamma]{w_k}\\
&=o(1)\|w_k\|_{H^\gamma}
\end{split}\end{equation}
for any $a\in A$.
\begin{prop}\label{Propo6.9}
There exists $c>0$ such that
\begin{align*}
\frac{n+2\gamma}{n-2\gamma}s_\infty\int_{ M}\uinf^{\frac{4\gamma}{n-2\gamma}}w_k^2\,d\mu_0\leq
(1-c)\int_{M}w_k\Pgo w_k\,d\mu_0,
\end{align*}
for all $k$ sufficiently large.
\end{prop}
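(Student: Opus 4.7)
The plan is to reduce the coercivity estimate to a Bessel-type inequality using the eigenbasis $\{\psi_a\}$. Expand
\begin{equation*}
w_k = \sum_{a} c_a^{(k)}\psi_a, \qquad c_a^{(k)} := \int_M \uinf^{\frac{4\gamma}{n-2\gamma}} \psi_a w_k\, d\mu_0,
\end{equation*}
in the weighted Hilbert space $L^2(\uinf^{\frac{4\gamma}{n-2\gamma}}d\mu_0)$. The weighted orthonormality of the $\psi_a$ together with the eigenvalue equation $\Pgo\psi_a=\lambda_a\uinf^{\frac{4\gamma}{n-2\gamma}}\psi_a$ yields
\begin{equation*}
\int_M \uinf^{\frac{4\gamma}{n-2\gamma}}w_k^2\,d\mu_0=\sum_a(c_a^{(k)})^2,\qquad \int_M w_k\Pgo w_k\,d\mu_0=\sum_a\lambda_a(c_a^{(k)})^2=\|w_k\|_{H^\gamma}^2.
\end{equation*}
So the claim reduces to producing some $c>0$ with $\tfrac{n+2\gamma}{n-2\gamma}s_\infty\sum_a(c_a^{(k)})^2\leq(1-c)\sum_a\lambda_a(c_a^{(k)})^2$ for all $k$ large.

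Next I split the sum at the finite index set $A$. For the high modes $a\notin A$, the very definition of $A$ combined with $\lambda_a\to\infty$ gives the uniform spectral gap $\mu:=\inf_{a\notin A}\lambda_a>\tfrac{n+2\gamma}{n-2\gamma}s_\infty$. Setting $\nu:=\tfrac{n+2\gamma}{n-2\gamma}s_\infty/\mu\in(0,1)$,
\begin{equation*}
\tfrac{n+2\gamma}{n-2\gamma}s_\infty\sum_{a\notin A}(c_a^{(k)})^2\leq\nu\sum_{a\notin A}\lambda_a(c_a^{(k)})^2\leq\nu\,\|w_k\|_{H^\gamma}^2,
\end{equation*}
which is a definite improvement on the good modes.

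For the low modes $a\in A$ I use the variational orthogonality \eqref{eq:orth_wk}, which is the payoff from choosing $\bar u_{z_k}$ as the closest element of the family $\{\bar u_z\}$ to $u_k$ in $H^\gamma$. That relation reads $\lambda_a c_a^{(k)}=o(1)\|w_k\|_{H^\gamma}$ for each $a\in A$, and since $A$ is finite with $\lambda_a>0$, summing gives $\sum_{a\in A}(c_a^{(k)})^2=o(\|w_k\|_{H^\gamma}^2)$. Combining with the previous estimate,
\begin{equation*}
\tfrac{n+2\gamma}{n-2\gamma}s_\infty\int_M\uinf^{\frac{4\gamma}{n-2\gamma}}w_k^2\,d\mu_0\leq(\nu+o(1))\|w_k\|_{H^\gamma}^2,
\end{equation*}
and for $k$ sufficiently large the bracket is at most $1-c$ with $c:=(1-\nu)/2>0$.

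The one delicate point I expect is verifying that the $o(1)$ in \eqref{eq:orth_wk} is genuinely small as $k\to\infty$. This rests on the analytic dependence $z\mapsto\bar u_z$ from Lemma \ref{Lemma6.4}, together with the identification $\tilde\psi_{a,0}=\psi_a$ obtained by differentiating the normalization \eqref{Lemma6.4:1} at $z=0$; this gives $\|\psi_a-\tilde\psi_{a,z_k}\|_{H^\gamma}\to 0$ as $z_k\to 0$, which is exactly what Corollary \ref{cor:CS} needs to produce the $o(1)$ factor. Once this convergence is granted, the remainder of the argument is purely spectral and requires no further analytic input.
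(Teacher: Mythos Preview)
Your proof is correct and rests on exactly the same two spectral facts as the paper's: the gap $\inf_{a\notin A}\lambda_a>\tfrac{n+2\gamma}{n-2\gamma}s_\infty$ for the high modes and the near-orthogonality \eqref{eq:orth_wk} for the finitely many low modes $a\in A$. The only difference is packaging: the paper runs a contradiction argument (normalize so that $\int\tilde w_k\Pgo\tilde w_k=1$, extract a weak limit $\tilde w$, and obtain the same spectral contradiction from \eqref{eq:orth_wk} and the definition of $A$), whereas you argue directly and quantitatively; the approaches are essentially the same, and your direct version avoids the weak-convergence step.
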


\begin{proof}
Suppose this were not true. Then there would be a subsequence, still denoted $w_k$, such that we may rescale them to $\tilde w_k$ satisfying
\[1=\int_{M}\tilde w_k\Pgo \tilde w_k\,d\mu_0\leq \liminf_{k\to \infty}\frac{n+2\gamma}{n-2\gamma}s_\infty\int_{ M}\uinf^{\frac{4\gamma}{n-2\gamma}}w_k^2\,d\mu_0.\]
Then $\tilde w_k$ is bounded in $H^\gamma$ and consequently $\tilde w_k\rightharpoonup \tilde w$ weakly in $H^\gamma$ for some $\tilde{w}$. The above inequality implies in particular that
\[1\leq\frac{n+2\gamma}{n-2\gamma}s_\infty\int_{ M}\uinf^{\frac{4\gamma}{n-2\gamma}}\tilde{w}^2\,d\mu_0,\]
so that $\tilde{w}\not\equiv0$. On the other hand,
\[\int_M \tilde w\Pgo\tilde{w}\,d\mu_0
    \leq\frac{n+2\gamma}{n-2\gamma}s_\infty
        \int_{M}\uinf^{\frac{4\gamma}{n-2\gamma}}\tilde{w}^2
            \,d\mu_0,
\]
or
\[\sum_{a\in\mathbb{N}}\lambda_a
    \left(\int_{M}\uinf^{\frac{4\gamma}{n-2\gamma}}\psi_a\tilde{w}\,d\mu_0\right)^2
\leq\sum_{a\in\mathbb{N}}\dfrac{n+2\gamma}{n-2\gamma}s_{\infty}
    \left(\int_{M}\uinf^{\frac{4\gamma}{n-2\gamma}}\psi_a\tilde{w}\,d\mu_0\right)^2.\]
However, \eqref{eq:orth_wk} shows that
\[\lambda_a\int_{M}\uinf^{\frac{4\gamma}{n-2\gamma}}\psi_a\tilde{w}\,d\mu_0=0,\]
for any $a\in A$, from which we arrive at a contradiction by the choice of $A$.
\end{proof}

We now estimate $w_k$ quantitatively.
\begin{lem}\label{lem:4.22}
There exist constants $C>0$ and $k_0$ such that for $k\geq k_0$ there holds
\[
\|w_k\|_{H^\gamma}
\leq C\left(\int_M
        \left|
            R(t_k)-s_\infty
        \right|^{\frac{2n}{n+2\gamma}}\,d\mu_{g(t_k)}
    \right)^{\frac{n+2\gamma}{2n}}.
\]
\end{lem}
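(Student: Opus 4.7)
The plan is to test the equation for $w_k$ against $w_k$ itself, use the coercivity of Proposition \ref{Propo6.9} to absorb the main quadratic term, and control the finite-dimensional residual by exploiting the almost-orthogonality \eqref{eq:orth_wk}. Writing $p=\frac{n+2\gamma}{n-2\gamma}$, Lemma \ref{Lemma6.4} guarantees that the residual
\[\eta_k := \Pgo \bar{u}_{z_k} - s_\infty \bar{u}_{z_k}^{p}\]
lies in $\mathrm{span}\set{\psi_b\,\uinf^{\frac{4\gamma}{n-2\gamma}} : b\in A}$, so subtracting it from $\Pgo u_k = R(t_k)u_k^{p}$ produces the key identity
\[\Pgo w_k = (R(t_k)-s_\infty)\,u_k^{p} + s_\infty\bigl(u_k^{p}-\bar{u}_{z_k}^{p}\bigr) - \eta_k.\]
Pairing with $w_k$ in $L^2(d\mu_0)$ and recognizing $\norm[H^\gamma]{w_k}^2 = \int_M w_k\,\Pgo w_k\,d\mu_0$ (the inner product implicit in Corollary \ref{cor:CS}) decomposes $\norm[H^\gamma]{w_k}^2 = I + II + III$.

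The first two terms are standard. For $I$, I rewrite $u_k^{p}\,d\mu_0 = u_k^{-1}\,d\mu_{g(t_k)}$ and apply H\"older with conjugate exponents $\frac{2n}{n-2\gamma}$ and $\frac{2n}{n+2\gamma}$, together with the Sobolev embedding $H^\gamma(M)\hookrightarrow L^{\frac{2n}{n-2\gamma}}(M)$ and the uniform upper and lower bounds on $u_k$ established just before Section \ref{sec:compact}, obtaining
\[|I| \leq C\,\norm[H^\gamma]{w_k}\left(\int_M |R(t_k)-s_\infty|^{\frac{2n}{n+2\gamma}}\,d\mu_{g(t_k)}\right)^{\frac{n+2\gamma}{2n}}.\]
For $II$, the Taylor expansion $u_k^{p}-\bar{u}_{z_k}^{p} = p\,\bar{u}_{z_k}^{p-1}w_k + O(|w_k|^{\min(2,p)})$, a Sobolev estimate of the remainder, and the uniform convergence $\bar{u}_{z_k}\to\uinf$ (which lets me swap $\bar{u}_{z_k}^{p-1}$ for $\uinf^{\frac{4\gamma}{n-2\gamma}}$ up to $o_k(1)\norm[H^\gamma]{w_k}^2$) combined with Proposition \ref{Propo6.9} yield $II \leq (1-c)\norm[H^\gamma]{w_k}^2 + o_k(1)\norm[H^\gamma]{w_k}^2$.

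Controlling $III$ is the main obstacle, since $\eta_k$ is only $O(|z_k|)$ a priori, which is too weak; the cure is to apply \eqref{eq:orth_wk} \emph{twice}. By the orthonormality property (ii), $\eta_k = \sum_{b\in A} d_{b,k}\,\psi_b\,\uinf^{\frac{4\gamma}{n-2\gamma}}$ with $d_{b,k} = \int_M \psi_b\,\eta_k\,d\mu_0$; testing the key identity above against $\psi_b$ (using the self-adjointness of Proposition \ref{lem:byparts} and $\Pgo\psi_b = \lambda_b\uinf^{\frac{4\gamma}{n-2\gamma}}\psi_b$) gives
\[d_{b,k} = \int_M\psi_b(R-s_\infty)u_k^{p}\,d\mu_0 + s_\infty\int_M\psi_b(u_k^{p}-\bar{u}_{z_k}^{p})\,d\mu_0 - \lambda_b\int_M\uinf^{\frac{4\gamma}{n-2\gamma}}\psi_b w_k\,d\mu_0.\]
The first right-hand term is bounded by $C(\int_M|R-s_\infty|^{\frac{2n}{n+2\gamma}}d\mu_{g(t_k)})^{\frac{n+2\gamma}{2n}}$ exactly as in the estimate on $I$; the second, after Taylor expansion and the swap $\bar{u}_{z_k}^{p-1}\rightsquigarrow\uinf^{\frac{4\gamma}{n-2\gamma}}$, and the third are each $o_k(1)\norm[H^\gamma]{w_k}$ by \eqref{eq:orth_wk}. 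Applying \eqref{eq:orth_wk} a second time to every factor $\int_M w_k\psi_b\uinf^{\frac{4\gamma}{n-2\gamma}}\,d\mu_0 = o_k(1)\norm[H^\gamma]{w_k}$ and summing over $b\in A$ then produces
\[|III| \leq o_k(1)\norm[H^\gamma]{w_k}\left[\left(\int_M|R-s_\infty|^{\frac{2n}{n+2\gamma}}d\mu_{g(t_k)}\right)^{\frac{n+2\gamma}{2n}} + \norm[H^\gamma]{w_k}\right].\]

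Collecting the three estimates,
\[(c - o_k(1))\norm[H^\gamma]{w_k}^2 \leq (C + o_k(1))\norm[H^\gamma]{w_k}\left(\int_M |R(t_k)-s_\infty|^{\frac{2n}{n+2\gamma}}\,d\mu_{g(t_k)}\right)^{\frac{n+2\gamma}{2n}},\]
and choosing $k\geq k_0$ large enough so that $c - o_k(1) > c/2$, division by $\norm[H^\gamma]{w_k}$ delivers the claimed inequality.
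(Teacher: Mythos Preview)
Your proof is correct and reaches the same conclusion, but the route differs from the paper's in how the finite-dimensional residual $\eta_k = \Pgo\bar{u}_{z_k} - s_\infty\bar{u}_{z_k}^{p}$ is handled. The paper introduces the exact projection $\hat{w}_k = \sum_{a\notin A}\bigl(\int_M u_\infty^{\frac{4\gamma}{n-2\gamma}}\psi_a w_k\,d\mu_0\bigr)\psi_a$, shows $\norm[H^\gamma]{\hat{w}_k - w_k} = o(1)\norm[H^\gamma]{w_k}$ via \eqref{eq:orth_wk}, and then tests the linearized identity against $\hat{w}_k$ rather than $w_k$; since $\hat{w}_k$ is exactly orthogonal (with respect to the inner product \eqref{eq:ip}) to each $\psi_a$ with $a\in A$, the pairing $\int_M\eta_k\hat{w}_k\,d\mu_0$ vanishes identically by \eqref{Lemma6.4:2}, and no separate estimate on the $d_{b,k}$ is needed. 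You instead test against $w_k$ directly and compensate by bounding each coefficient $d_{b,k}$ (testing the same identity against $\psi_b$) and then applying \eqref{eq:orth_wk} a second time to the pairing $\int_M w_k\psi_b u_\infty^{\frac{4\gamma}{n-2\gamma}}\,d\mu_0$. Both strategies are valid; the paper's projection trick is a touch cleaner because it kills the residual in one stroke, while yours is more hands-on but avoids introducing the auxiliary function $\hat{w}_k$.

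One small remark on your estimate of $I$: the uniform two-sided bounds on $u_k$ are not actually needed there. After writing $u_k^{p}\,d\mu_0 = u_k^{-1}\,d\mu_{g(t_k)}$ and applying H\"older, the dual factor is
\[
\Bigl(\int_M |w_k u_k^{-1}|^{\frac{2n}{n-2\gamma}}\,d\mu_{g(t_k)}\Bigr)^{\frac{n-2\gamma}{2n}} = \Bigl(\int_M |w_k|^{\frac{2n}{n-2\gamma}}\,d\mu_0\Bigr)^{\frac{n-2\gamma}{2n}},
\]
so the $u_k$'s cancel exactly and only the Sobolev embedding is required; this is how the paper proceeds as well.
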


We need some elementary inequalities, see also \cite[(137),\,(156)]{brendle1}.
\begin{lem}\label{lem:pointwise}
Let $a,b>0$. For $p>0$, we have
\[|a^p-b^p|
    \leq{C}|a-b|^p+Ca^{p-1}|a-b|.\]
For $p>1$,
\[|a^p-b^p-pa^{p-1}(a-b)|
    \leq{C}a^{\max\set{p-2,0}}|a-b|^{\min\set{p,2}}
        +C|a-b|^p.\]
Moreover, for $p>2$,
\begin{multline*}
\left|a^p-b^p-pa^{p-1}(a-b)
+\dfrac{p(p-1)}{2}b^{p-2}(a-b)^2\right|\\
    \leq{C}a^{\max\set{p-3,0}}|a-b|^{\min\set{p,3}}
        +C|a-b|^p.
\end{multline*}
\end{lem}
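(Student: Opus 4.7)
The plan is to treat all three inequalities uniformly by scaling out the variable $a$ and then splitting into a near-diagonal and a far-diagonal regime. By homogeneity, set $t = b/a > 0$; after dividing the stated inequalities by $a^p$, they reduce to one-variable statements of the form
\[
|P_k(t)| \leq C|1-t|^{\min\{p,k+1\}} + C|1-t|^p,
\]
where $P_0(t) = 1 - t^p$, $P_1(t) = 1 - t^p - p(1-t)$, and $P_2(t) = 1-t^p - p(1-t) + \tfrac{p(p-1)}{2}(1-t)^2$ (with the factor $a^{\max\{p-k-1,0\}}$ arising automatically from the mismatch between $a^p$ and $a^{\min\{p,k+1\}}$ when $p \leq k+1$). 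Thus each inequality becomes a claim about a single smooth function of $t$.

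In the near-diagonal regime $|1 - t| \leq 1/2$, I would apply Taylor's theorem to $s \mapsto s^p$ about $s=1$ to order $k+1$. Since $s^p$ is $C^\infty$ on the compact interval $[1/2,3/2]$, the remainder is bounded by $C|1-t|^{k+1}$. When $p \geq k+1$ this already matches the $|1-t|^{\min\{p,k+1\}} = |1-t|^{k+1}$ term on the right. When $p < k+1$, I would write $|1-t|^{k+1} = |1-t|^{(k+1)-p}\,|1-t|^p \leq C|1-t|^p$ using that $|1-t|$ is bounded; after multiplying back by $a^p$ this produces the factor $a^{p-k-1}$ on the left-hand side via $|a-b|^{k+1} = a^{k+1-p}|1-t|^{k+1-p}\cdot|a-b|^p$, which after the opposite rearrangement yields exactly the stated form.

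In the far-diagonal regime $|1-t| > 1/2$ (either $t \leq 1/2$ or $t \geq 3/2$), I would just bound each summand in $P_k(t)$ crudely. Here $|1-t|$ is comparable to $\max\{1,t\}$, so each of the three monomial pieces $1$, $t^p$, $(1-t)$, $(1-t)^2$ is controlled by a constant multiple of $|1-t|^p$ (possibly combined with a lower-order power). Reassembling the factors of $a^p$ turns $|1-t|^p$ into $|a-b|^p$, producing the second term on the right-hand side of each inequality.

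The main obstacle, and the only delicate bookkeeping, is matching the two regimes so that the exponents $\max\{p-k,0\}$ and $\min\{p,k+1\}$ line up on the near side, while the $|a-b|^p$ term absorbs everything on the far side. Once this is handled correctly — which amounts to checking the two subcases $p \leq k+1$ and $p > k+1$ for each of (i), (ii), (iii) — no further ingredient beyond Taylor's theorem and elementary monotonicity of powers is needed.
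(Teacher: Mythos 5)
Your approach is essentially the same as the paper's: both split into a near-diagonal regime $|a-b|\le a/2$ (equivalently $|1-t|\le 1/2$ after scaling) and a far-diagonal regime, use a Taylor expansion of the appropriate order in the first and crude bounds in the second. Your homogeneity reduction $t=b/a$ is a clean way to organize this, but the paper's case analysis in terms of $h=a-b$ is the same two-regime split, so this is a presentational rather than substantive difference.

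One point to fix: your $P_2(t)$ is mis-transcribed. The third inequality involves $b^{p-2}(a-b)^2$, so dividing by $a^p$ produces $t^{p-2}(1-t)^2$, not $(1-t)^2$; as written you are analyzing the (different, though also true) version of the inequality with $a^{p-2}$ in the quadratic correction. This does not break the method — near $t=1$ the discrepancy $t^{p-2}-1$ is $O(|1-t|)$ and gets absorbed into the cubic error, and in the far regime $t^{p-2}(1-t)^2\lesssim |1-t|^p$ just as easily — but as stated you would be proving a different lemma. Similarly, for $k=0$ the lemma's right-hand side has $a^{p-1}|a-b|$ with no $\max/\min$; your version with $a^{\max\{p-1,0\}}|a-b|^{\min\{p,1\}}$ is actually a stronger claim for $p<1$, which is fine (and still provable by your argument), but worth noting that it is not a literal transcription of the first inequality.
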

\begin{proof}
Let $h=a-b$. The first one follows directly from
\[|a^p-(a-h)^p|\leq\begin{cases}
Ca^{p-1}|h| & \textfor |h|\leq\frac{a}{2},\\
C|h|^p & \textfor |h|\geq\frac{a}{2}.
\end{cases}\]

The second estimate is similar when $p\geq2$, where we expand to the second order,
\[|a^p-(a-h)^p-pa^{p-1}h|\leq\begin{cases}
Ca^{p-2}|h|^2 & \textfor |h|\leq\frac{a}{2},\\
C|h|^p & \textfor |h|\geq\frac{a}{2}.
\end{cases}\]
When $p<2$, in the regime $|h|\leq\frac{a}{2}$ we simply bound $a^{p-2}\leq{C}|h|^{p-2}$.

The same argument applied to the last estimate reads
\[\left|a^p-(a-h)^p-pa^{p-1}h+\dfrac{p(p-1)}{2}(a-h)^{p-2}h^2\right|
\leq\begin{cases}
Ca^{p-3}|h|^3 & \textfor |h|\leq\frac{a}{2},\\
C|h|^p & \textfor |h|\geq\frac{a}{2},
\end{cases}
\]
hence the result.
\end{proof}

\begin{proof}[Proof of Lemma \ref{lem:4.22}] Denote $\hat w_k$ to be the projection of $w_k$ onto the subspace $\{\psi_a \mid a\not\in A\}$,
\[\hat w_k=\sum_{a\not\in A}\left(\int_M \uinf^{\frac{4\gamma}{n-2\gamma}}\psi_{a}w_{k}\,d\mu_0\right)\psi_a,\]
so that 
\[\int_{M}\uinf^{\frac{4\gamma}{n-2\gamma}}\psi_a\hat{w}_k\,d\mu_0=0\]
for any $a\in A$. Moreover, \eqref{eq:orth_wk} shows that
\[\begin{split}
    \int_M (\hat w_k-w_k)\Pgo(\hat w_k-w_k)d\mu_0
    &=\sum_{a\in A}\lambda_a\left(\int_M \uinf^{\frac{4\gamma}{n-2\gamma}}\psi_aw_kd\mu_0\right)^2
    \\
    &=o(1)\|w_k\|_{H^\gamma}^2.
\end{split}\]
In other words, $\|\hat w_k-w_k\|_{H^\gamma}=o(1)\|w_k\|_{H^\gamma}$. With the decomposition $u_k=\bar u_{z_k}+w_k$, we calculate the linearization
\begin{equation}\label{eq:expansion-R}
    \begin{split}
    &\quad\;\left(R(t_k)-s_\infty\right)u_k^{\frac{n+2\gamma}{n-2\gamma}}\\
    &=\Pgo{u_k}-s_{\infty}u_k^{\frac{n+2\gamma}{n-2\gamma}}\\
    &=\Pgo\bar u_{z_k}-s_\infty \bar u_{z_k}^{\frac{n+2\gamma}{n-2\gamma}}+\Pgo w_k-s_\infty u_k^{\frac{n+2\gamma}{n-2\gamma}}+s_\infty \bar u_{z_k}^{\frac{n+2\gamma}{n-2\gamma}}\\
    &=\Pgo\bar u_{z_k}-s_\infty \bar u_{z_k}^{\frac{n+2\gamma}{n-2\gamma}}+\Pgo w_k-\frac{n+2\gamma}{n-2\gamma}s_\infty u_\infty^{\frac{4\gamma}{n-2\gamma}}w_k+I_k,
    \end{split}
\end{equation}
where
\[\begin{split}
I_k
&=s_\infty
    \left(\bar{u}_{z_k}^{\frac{n+2\gamma}{n-2\gamma}}
        -u_k^{\frac{n+2\gamma}{n-2\gamma}}
        +\frac{n+2\gamma}{n-2\gamma}u_\infty^{\frac{4\gamma}{n-2\gamma}}w_k
    \right)\\
&=\frac{n+2\gamma}{n-2\gamma}s_\infty
    \left(\bar{u}_{z_k}^{\frac{4\gamma}{n-2\gamma}}-u_\infty^{\frac{4\gamma}{n-2\gamma}}
    \right)w_k\\
&\quad\;+s_\infty
    \left(\bar{u}_{z_k}^{\frac{n+2\gamma}{n-2\gamma}}
        -u_k^{\frac{n+2\gamma}{n-2\gamma}}
        +\frac{n+2\gamma}{n-2\gamma}\bar{u}_{z_k}^{\frac{4\gamma}{n-2\gamma}}w_k
    \right).
\end{split}\]
Using Lemma \ref{lem:pointwise},
\begin{equation*}\begin{split}
|I_k|
&\leq{C}\uinf^{\frac{6\gamma-n}{n-2\gamma}}
    |\bar{u}_{z_k}-\uinf||w_k|
    +C|\bar{u}_{z_k}-u_\infty|^{\frac{4\gamma}{n-2\gamma}}|w_k|\\
&\quad\;+C\bar{u}_{z_k}^{\max\set{0,\frac{4\gamma}{n-2\gamma}-1}}
        |w_k|^{\min\set{\frac{n+2\gamma}{n-2\gamma},2}}
    +C|w_k|^{\frac{n+2\gamma}{n-2\gamma}}
\end{split}\end{equation*}
By the Sobolev embedding $H^\gamma\hookrightarrow L^{\frac{2n}{n-2\gamma}}$ and the smallness of $\bar{u}_{z_k}-\uinf$ and $w_k$, we conclude
\begin{equation*}
\int_M |I_k\hat w_k|\,d\mu_0
\leq \int_M |I_k| |w_k|\,d\mu_0
\leq o(1)\|w_k\|_{H^\gamma}^2
\end{equation*}
as $k\to\infty$. (Note that we need $\uinf\geq{c}>0$ in case $6\gamma<n$.) Notice that the projection $\hat{w_k}$ satisfies
\begin{equation*}
    \int_M \left(\Pgo\bar u_{z_k}-s_\infty \bar{u}_{z_k}^{\frac{n+2\gamma}{n-2\gamma}}\right)
    \hat w_k\,d\mu_0=0
\end{equation*}
because of \eqref{Lemma6.4:2}. Now, using Proposition \ref{Propo6.9},
\begin{equation*}\begin{split}
    c\|w_k\|^2_{H^\gamma}
    &\leq \int_M \left(\Pgo w_k-\frac{n+2\gamma}{n-2\gamma}s_\infty \uinf^{\frac{4\gamma}{n-2\gamma}}w_k\right)w_k\,d\mu_0\\
    &=\int_{M}\left(\Pgo{w}_k
            -\frac{n+2\gamma}{n-2\gamma}s_\infty \uinf^{\frac{4\gamma}{n-2\gamma}}w_k
        \right)\hat w_k\,d\mu_0
        +o(1)\|w_k\|^2_{H^\gamma}\\
    &=\int_M \left(\left(R(t_k)-s_\infty\right)
            u_k^{\frac{n+2\gamma}{n-2\gamma}}
            \hat{w}_k
        -I_k\hat w_k\right)\,d\mu_0
        +o(1)\|w_k\|^2_{H^\gamma}\\
    &\leq{C}
        \left(\int_{M}
            \left|
                R(t_k)-s_\infty
            \right|^{\frac{2n}{n+2\gamma}}
            \,d\mu_{g(t_k)}
        \right)^{\frac{n+2\gamma}{2n}}
            \|w_k\|_{H^\gamma}
        +o(1)\|w_k\|^2_{H^\gamma}
\end{split}\end{equation*}
and the claim follows by making $k$ large enough.
\end{proof}

A related computation completes the estimate in Lemma \ref{Lemma6.5}.
\begin{lem}\label{lem:4.31}
There exist $C>0$ and $k_0$ such that for $k\geq k_0$ there holds
\[E(\bar{u}_{z_k})-E(u_\infty)
\leq{C}
    \left(\int_{M}
        \left|
            R(t_k)-s_\infty
        \right|^{\frac{2n}{n+2\gamma}}
        \,d\mu_{g(t_k)}
    \right)^{\frac{n+2\gamma}{2n}(1+\delta)}.\]
\end{lem}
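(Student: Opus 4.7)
The plan is to reduce the inequality to the {\L}ojasiewicz-type estimate of Lemma \ref{Lemma6.5} applied at $z = z_k$, and then show that the resulting supremum is controlled by the desired data quantity. Since $z_k \to 0$ (by the variational definition of $z_k$ and the convergence $u_k \to u_\infty$ in $H^\gamma$), Lemma \ref{Lemma6.5} applies for all large $k$ and reduces the problem to proving, uniformly in $a \in A$,
\begin{equation*}
\left|\int_M \psi_a\left(\Pgo \bar{u}_{z_k} - s_\infty \bar{u}_{z_k}^{\frac{n+2\gamma}{n-2\gamma}}\right) d\mu_0\right| \leq C\left(\int_M |R(t_k) - s_\infty|^{\frac{2n}{n+2\gamma}} d\mu_{g(t_k)}\right)^{\frac{n+2\gamma}{2n}}.
\end{equation*}

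First I would rearrange the linearization identity \eqref{eq:expansion-R} to isolate the left-hand side as
\begin{equation*}
\Pgo \bar{u}_{z_k} - s_\infty \bar{u}_{z_k}^{\frac{n+2\gamma}{n-2\gamma}} = (R(t_k) - s_\infty) u_k^{\frac{n+2\gamma}{n-2\gamma}} - \left(\Pgo w_k - \frac{n+2\gamma}{n-2\gamma} s_\infty u_\infty^{\frac{4\gamma}{n-2\gamma}} w_k\right) - I_k,
\end{equation*}
multiply by $\psi_a$ and integrate. Using Proposition \ref{lem:byparts} together with the eigenfunction equation $\Pgo \psi_a = \lambda_a u_\infty^{\frac{4\gamma}{n-2\gamma}} \psi_a$, the linearized middle term collapses to
\begin{equation*}
\left(\lambda_a - \frac{n + 2\gamma}{n - 2\gamma} s_\infty\right) \int_M u_\infty^{\frac{4\gamma}{n - 2\gamma}} \psi_a w_k\,d\mu_0,
\end{equation*}
which by the orthogonality estimate \eqref{eq:orth_wk} (and since the finite collection $\{\lambda_a\}_{a \in A}$ is bounded away from $0$) is $o(1) \|w_k\|_{H^\gamma}$. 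The error $\int_M \psi_a I_k\,d\mu_0$ is estimated by the pointwise bound on $|I_k|$ derived in the proof of Lemma \ref{lem:4.22}; since $\psi_a$ is uniformly bounded, the same H\"older/Sobolev argument used there (with the factor $|w_k|$ replaced by $|\psi_a|$) yields $o(1) \|w_k\|_{H^\gamma}$ as well.

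For the leading ``data'' term, H\"older's inequality on the normalized manifold $(M, g(t_k))$ gives, using $u_k^{\frac{n+2\gamma}{n-2\gamma}} d\mu_0 = u_k^{-1}\,d\mu_{g(t_k)}$, the uniform boundedness of $u_k^{-1}$ and $\psi_a$, and $\mu_{g(t_k)}(M) = 1$,
\begin{equation*}
\left|\int_M \psi_a (R(t_k) - s_\infty) u_k^{\frac{n+2\gamma}{n-2\gamma}}\,d\mu_0\right| \leq C\left(\int_M |R(t_k) - s_\infty|^{\frac{2n}{n+2\gamma}} d\mu_{g(t_k)}\right)^{\frac{n+2\gamma}{2n}}.
\end{equation*}
Assembling these three bounds and invoking Lemma \ref{lem:4.22} to absorb the $o(1) \|w_k\|_{H^\gamma}$ terms into the same data quantity gives the displayed estimate, and raising to the power $1 + \delta$ finishes the argument. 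The mildly delicate point I anticipate is verifying that the exponent $\max\{0, \frac{4\gamma}{n-2\gamma} - 1\}$ appearing in $I_k$ (when $6\gamma > n$) does not cause integrability issues when tested against $\psi_a$ instead of against $\hat{w}_k$; but the uniform bounds on $\bar{u}_{z_k}$ together with $w_k \to 0$ in $H^\gamma$ and the Sobolev embedding make this routine.
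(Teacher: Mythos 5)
Your proposal is correct and follows essentially the same strategy as the paper: pass through Lemma \ref{Lemma6.5} and then control $\sup_{a\in A}\bigl|\int_M \psi_a\bigl(\Pgo\bar u_{z_k}-s_\infty\bar u_{z_k}^{\frac{n+2\gamma}{n-2\gamma}}\bigr)d\mu_0\bigr|$ by splitting via \eqref{eq:expansion-R} into a data term, a linear term and a nonlinear remainder, and finally invoking Lemma \ref{lem:4.22}. The only (harmless) divergence is organizational: the paper groups the remainder as $-\Pgo w_k-s_\infty\bigl(\bar u_{z_k}^{\frac{n+2\gamma}{n-2\gamma}}-u_k^{\frac{n+2\gamma}{n-2\gamma}}\bigr)$ and bounds each piece crudely by $C\|w_k\|_{H^\gamma}$ using Corollary \ref{cor:CS} and a direct H\"older/Sobolev estimate, whereas you keep the finer splitting $-\bigl(\Pgo w_k-\tfrac{n+2\gamma}{n-2\gamma}s_\infty\uinf^{\frac{4\gamma}{n-2\gamma}}w_k\bigr)-I_k$ and exploit the eigenfunction relation plus the near-orthogonality \eqref{eq:orth_wk} to squeeze out an $o(1)\|w_k\|_{H^\gamma}$ bound. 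Both routes end in the same place after Lemma \ref{lem:4.22}, so your extra sharpness buys nothing here, but the argument is sound (in particular, your estimate of $\int_M\psi_a I_k\,d\mu_0$ by $o(1)\|w_k\|_{H^\gamma}$ goes through because $A$ is finite and each $\psi_a$ is bounded, and the $u_\infty^{\frac{6\gamma-n}{n-2\gamma}}$ factor is harmless in the compact case where $\uinf\geq c>0$).
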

\begin{proof}
Recalling \eqref{eq:expansion-R},
\[\Pgo \bar u_{z_k}-s_\infty \bar u_{z_k}^{\frac{n+2\gamma}{n-2\gamma}}
=\left(R(t_k)-s_\infty\right)
    u_k^{\frac{n+2\gamma}{n-2\gamma}}
-\Pgo w_k
-s_\infty\left(\bar{u}_{z_k}^\frac{n+2\gamma}{n-2\gamma}
    -u_k^\frac{n+2\gamma}{n-2\gamma}\right),
\]

it suffices to bound the projection of each term on the right hand side onto the finite dimensional subspace spanned by $\psi_a$, where $a\in{A}$. We have 
\[\int_M \Pgo(w_k)\psi_a \,d\mu_0
\leq C\|w_k\|_{H^\gamma}\]
by Corollary \ref{cor:CS}. Also,

\[\begin{split}
\int_{M}s_\infty
    \left(\bar{u}_{z_k}^\frac{n+2\gamma}{n-2\gamma}
        -u_k^\frac{n+2\gamma}{n-2\gamma}
    \right)\psi_a\,d\mu_0
&\leq{C}\int_{M}\left(u_k+|w_k|\right)^{\frac{4\gamma}{n-2\gamma}}
    |w_k\psi_a|\,d\mu_0\\
&\leq C\|w_k\|_{H^\gamma}.
\end{split}\]
Using Lemma \ref{lem:4.22}, we get
\begin{multline*}
\sup_{a\in A}\left|\int_M\left(\Pgo\bar u_{z_k}-s_\infty\bar u_{z_k}^{\frac{n+2\gamma}{n-2\gamma}}\right)
    \psi_a\,d\mu_0\right|\\
\leq{C}\left(\int_{M}
        \left|
            R(t_k)-s_\infty
        \right|^{\frac{2n}{n+2\gamma}}
        \,d\mu_{g(t_k)}
    \right)^{\frac{n+2\gamma}{2n}}.
\end{multline*}
Our claim follows from Lemma \ref{Lemma6.5}.
\end{proof}

We can finally turn to the proof of Proposition \ref{prop:4.16} in the compact case.

\begin{proof}[Proof of Proposition \ref{prop:4.16}]
By the conformal relation \eqref{eq:conformalP}, we compute
\[\begin{split}
E(u_k)
&=\int_{M}(\bar u_{z_k}+w_k)\Pgo(\bar{u}_{z_k}+w_k)
    \,d\mu_0\\
&=\int_M \bar{u}_{z_k}\Pgo\bar{u}_{z_k}\,d\mu_0
    +2\int_{M}R(t_k)u_k^{\frac{n+2\gamma}{n-2\gamma}}
        w_k\,d\mu_0
    -\int_M w_k\Pgo w_k\,d\mu_0\\
&=s_\infty
    +2\int_{M}\left(R(t_k)-s_\infty\right)
        u_k^{\frac{n+2\gamma}{n-2\gamma}}w_k\,d\mu_0\\
&\quad\;-\int_{M}
    \left(w_k\Pgo{w_k}
        -\frac{n+2\gamma}{n-2\gamma}s_\infty
        \bar{u}_{z_k}^{\frac{4\gamma}{n-2\gamma}}w_k^2
    \right)\,d\mu_0
    +J_k
\end{split}
\]
where
\[\begin{split}
    J_k&=(E(\bar u_{z_k})-s_\infty)\left(\int_M \bar u_{z_k}^{\frac{2n}{n-2\gamma}}d\mu_0\right)^{\frac{n-2\gamma}{n}}+s_\infty\left(\left(\int_M \bar u_{z_k}^{\frac{2n}{n-2\gamma}}d\mu_0\right)^{\frac{n-2\gamma}{n}}-1\right)\\
    &\quad\;+s_\infty\int_M \left(2 u_k^{\frac{n+2\gamma}{n-2\gamma}}w_k-\frac{n+2\gamma}{n-2\gamma}\bar u_{z_k}^{\frac{4\gamma}{n-2\gamma}}w_k^2\right)d\mu_0.
\end{split}\]
Since $x\mapsto x^{\frac{n-2\gamma}{n}}$ is a concave function,
\[\begin{split}
\left(\int_M \bar u_{z_k}^{\frac{2n}{n-2\gamma}}\,d\mu_0\right)^{\frac{n-2\gamma}{n}}-1
&\leq \frac{n-2\gamma}{n}\left(\int_M \bar u_{z_k}^{\frac{2n}{n-2\gamma}}\,d\mu_0-1\right)\\
&=\frac{n-2\gamma}{n}\int_M \left(\bar u_{z_k}^{\frac{2n}{n-2\gamma}}-u_{k}^{\frac{2n}{n-2\gamma}}\right)
    \,d\mu_0.
\end{split}\]
Then we can estimate the error term as
\[\begin{split}
    J_k
    &\leq (E(\bar u_{z_k})-s_\infty)\left(\int_M \bar u_{z_k}^{\frac{2n}{n-2\gamma}}d\mu_0\right)^{\frac{n-2\gamma}{n}}
    \\
    &\quad\;-s_\infty \int_M \Bigg(
        \frac{n-2\gamma}{n}u_k^{\frac{2n}{n-2\gamma}}
        -\frac{n-2\gamma}{n}\bar{u}_{z_k}^{\frac{2n}{n-2\gamma}}
    \\
    &\qquad\qquad\qquad
        -2u_k^{\frac{n+2\gamma}{n-2\gamma}}w_k
        +\frac{n+2\gamma}{n-2\gamma}\bar{u}_{z_k}^{\frac{4\gamma}{n-2\gamma}}w_k^2
    \Bigg)d\mu_0.
\end{split}
\]
Recalling $u_k=\bar{u}_{z_k}+w_k$, the last integrand is a multiple of
\[\left(\bar{u}_{z_k}+w_k\right)^{\frac{n-2\gamma}{n}}
    -\bar{u}_{z_k}^{\frac{2n}{n-2\gamma}}
    -\dfrac{2n}{n-2\gamma}
        \left(\bar{u}_{z_k}+w_k\right)^{\frac{2n}{n-2\gamma}}w_k
    +\dfrac12\cdot\dfrac{2n}{n-2\gamma}\cdot\dfrac{n+2\gamma}{n-2\gamma}
        \bar{u}_{z_k}^{\frac{4\gamma}{n-2\gamma}}w_k^2
\]
hence the pointwise estimate in Lemma \ref{lem:pointwise} applies to yield
\[\begin{split}
    &\quad\;\int_M\left|
        \frac{n-2\gamma}{n}u_k^{\frac{2n}{n-2\gamma}}
        -\frac{n-2\gamma}{n}\bar{u}_{z_k}^{\frac{2n}{n-2\gamma}}
        -2u_k^{\frac{n+2\gamma}{n-2\gamma}}w_k
        +\frac{n+2\gamma}{n-2\gamma}\bar{u}_{z_k}^{\frac{4\gamma}{n-2\gamma}}w_k^2
        \right|\,d\mu_0\\
    &\leq{C}\int_{M}
        \bar{u}_{z_k}^{\max\set{0,\frac{6\gamma-n}{n-2\gamma}}}
            |w_k|^{\min\set{\frac{2n}{n-2\gamma},3}}
        \,d\mu_0
        +C\int_{M}|w_k|^{\frac{2n}{n-2\gamma}} \,d\mu_0\\
    &\leq{C}\left(
            \int_{M}|w_k|^{\frac{2n}{n-2\gamma}}\,d\mu_0
        \right)^{\frac{n-2\gamma}{2n}\min\set{\frac{2n}{n-2\gamma},3}}\\
    &\leq{C}\|w_k\|_{H^\gamma}^{\min\set{\frac{2n}{n-2\gamma},3}}.
\end{split}
\]
Now the results of Lemma \ref{lem:4.22} and \ref{lem:4.31} imply
\[\begin{split}
    J_k
    &\leq (E(\bar{u}_{z_k})-s_\infty)
        \left(\int_{M}\bar{u}_{z_k}^{\frac{2n}{n-2\gamma}}
            \,d\mu_0\right)^{\frac{n-2\gamma}{n}}
        +C\|w_k\|_{H^\gamma}^{\min\set{\frac{2n}{n-2\gamma},3}}\\
    &\leq{C}
        \left(\int_M
            \left|
                R(t_k)-s_\infty
            \right|^{\frac{2n}{n+2\gamma}}\,d\mu_{g(t_k)}
        \right)^{\frac{n+2\gamma}{2n}(1+\delta)}\\
    &\leq{C}F_{\frac{2n}{n+2\gamma}}(g(t_k))^{\frac{n+2\gamma}{2n}(1+\delta)}
        +C(s(t_k)-s_\infty)^{1+\delta}.
\end{split}
\]

It follows from H\"{o}lder's inequality, Proposition \ref{Propo6.9} and Lemma \ref{lem:4.22} that the remaining terms are also bounded by
\[\begin{split}
    &\quad\;2\int_{M}\left(R(t_k)-s_\infty\right)
        u_k^{\frac{n+2\gamma}{n-2\gamma}}w_k\,d\mu_0\\
    &\qquad\qquad-\int_{M}
        \left(w_k\Pgo{w_k}
            -\frac{n+2\gamma}{n-2\gamma}s_{\infty}
                \bar{u}_{z_k}^{\frac{4\gamma}{n-2\gamma}}
                w_k^2
        \right)\,d\mu_0\\
    &\leq{C}
        \left(\int_{M}
            \left|
                R(t_k)-s_\infty
            \right|^{\frac{2n}{n+2\gamma}}
            \,d\mu_{g(t_k)}
        \right)^{\frac{n+2\gamma}{2n}}
            \|w_k\|_{L^{\frac{2n}{n-2\gamma}}}
        -c\|w_k\|^2_{H^\gamma}\\
    &\leq{C}\left(\int_M|R(t_k)-s_\infty|^{\frac{2n}{n+2\gamma}}d\mu_{g(t_k)}\right)^{\frac{n+2\gamma}{n}}\\
    &\leq{C}F_{\frac{2n}{n+2\gamma}}(g(t_k))^{\frac{n+2\gamma}{2n}(1+\delta)}
        +C(s(t_k)-s_\infty)^{1+\delta}.
\end{split}
\]

Combining our expansion of $E(u_k)$ and the previous estimates, we get
\[s(t_k)-s_\infty=E(u_k)-s_\infty\leq C F_{\frac{2n}{n+2\gamma}}(g(t_k))^{\frac{n+2\gamma}{2n}(1+\delta)}+C(s(t_k)-s_\infty)^{1+\delta}.\]
Since $s(t_k)\to s_\infty$ as $k\to \infty$ and $\delta\in (0,1)$, then
\[s(t_k)-s_\infty\leq CF_{\frac{2n}{n+2\gamma}}(g(t_k))^{\frac{n+2\gamma}{2n}(1+\delta)},\]
as desired.
\end{proof}

\section{The noncompact case}\label{sec:noncompact}
In this case we have $u_\infty=0$. Following \cite{Kim2018} with the assumption of Positive Mass Theorem for our operators, there is a test function $u$ such that
\[E(u)
=\frac{\displaystyle\int_{M}u\Pgo{u}\,d\mu_0}
    {\left(\displaystyle\int_{M}u^{\frac{2n}{n-2\gamma}}d\mu_0
        \right)^{\frac{n-2\gamma}{n}}}
<Y_{\gamma}(\mathbb{S}^n).\]
Such $u$ is found through the rescaling and relocation of standard bubble $\bar{u}$, possibly truncated or perturbed. By specifying the relocation and rescaling parameters $(x_0,\eps)$ of such test function, we use the notation $u_{(x_0,\eps)}$ for a more precise purpose. Near $x_0$, $u_{(x_0,\eps)}(x)$ is comparable to \[\bar\alpha_{n,\gamma}s_\infty^{-\frac{n-2\gamma}{4\gamma}}
    \eps^{-\frac{n-2\gamma}{2}}\bar{u}
    \left(\eps^{-1}\exp_{x_0}^{-1}(x)\right),
\]
where $\bar\alpha_{n,\gamma}$ can be found at \cite[1-23]{Kim2018}.

From the profile decomposition, we know that $u_k=u(t_k)$ approaches some $u_{(x_k,\eps_k)}$ in $H^\gamma$. We prefer to use the best approximation in the following sense,
\begin{multline*}
\int_M\left(u_k-\alpha_ku_{(x_k,\eps_k)}\right)
    \Pgo(u_k-\alpha_ku_{(x_k,\eps_k)})\,d\mu_0\\
=\min_{\alpha>0,x\in M,\eps>0}
    \int_M\left(u_k-\alpha u_{(x,\eps)}\right)
    \Pgo\left(u_k-\alpha u_{(x,\eps)}\right)\,d\mu_0.
\end{multline*}
Then
\[u_k=\alpha_ku_{(x_k,\eps_k)}+w_k=:v_k+w_k\]
with some suitable $x_k,\eps_k$ and $\alpha_k\to\const>0$.
Then we have the following lemma from the variation of three parameters $\alpha,\eps$, and $x$ respectively:

\begin{lem}\label{lem:5.1}
As $k\to \infty$, there hold
\begin{enumerate}
    \item $\displaystyle\int_M v_{k}^{\frac{n+2\gamma}{n-2\gamma}}w_k\,d\mu_0
            =o(1)\|w_k\|_{H^\gamma}$;
    \item $\displaystyle\int_M v_{k}^{\frac{n+2\gamma}{n-2\gamma}}\frac{\eps_k^2-d(x,x_k)^2}{\eps_k^2+d(x,x_k)^2}w_k\,d\mu_0
            =o(1)\|w_k\|_{H^\gamma}$;
    \item $\displaystyle\int_M v_{k}^{\frac{n+2\gamma}{n-2\gamma}}\frac{\eps_k \exp_{x_k}^{-1}(x)}{\eps_k^2+d(x,x_k)^2}w_k\,d\mu_0
            =o(1)\|w_k\|_{H^\gamma}$.
\end{enumerate}
\end{lem}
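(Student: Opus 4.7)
The three asserted identities are the Euler--Lagrange conditions for the best-approximation problem defining $(\alpha_k,x_k,\eps_k)$. The strategy is to differentiate the approximation functional with respect to each of the three parameters in turn, apply the self-adjointness of $\Pgo$ (Proposition \ref{lem:byparts}) to move the operator from $w_k$ onto the parameter-derivative of the bubble, and then identify the resulting test function with the claimed integrand via the Euclidean linearization.

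Varying $\alpha$ at the minimizer yields $\int_M u_{(x_k,\eps_k)}\Pgo(w_k)\,d\mu_0=0$, which after integration by parts becomes $\int_M \Pgo(u_{(x_k,\eps_k)})\,w_k\,d\mu_0=0$. Since $(-\Delta_{\R^n})^\gamma \bar u=\bar u^{(n+2\gamma)/(n-2\gamma)}$ and since in normal coordinates at $x_k$ rescaled by $\eps_k$ the metric $g_0$ is an $O(\eps_k^2)$ perturbation of the flat metric while the cutoff $\varphi_{x_k}$ is supported far from the concentration scale, one obtains
\[\Pgo(u_{(x_k,\eps_k)})=\alpha_k^{-\frac{n+2\gamma}{n-2\gamma}}\,v_k^{\frac{n+2\gamma}{n-2\gamma}}+\mathrm{Err}_k,\qquad\|\mathrm{Err}_k\|_{L^{2n/(n+2\gamma)}(M)}=o(1).\]
Pairing with $w_k$, H\"older's inequality together with the Sobolev embedding $H^\gamma(M)\hookrightarrow L^{2n/(n-2\gamma)}(M)$ gives item~(1) after multiplication by $\alpha_k^{(n+2\gamma)/(n-2\gamma)}$.

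For (2) and (3), the analogous first-order conditions in $\eps$ and in $x$ read
\[\int_M \Pgo(\partial_\eps u_{(x_k,\eps_k)})\,w_k\,d\mu_0=0,\qquad \int_M \Pgo\bigl(\nabla_{x_0}u_{(x_0,\eps_k)}\big|_{x_0=x_k}\bigr)\,w_k\,d\mu_0=0.\]
Differentiating the Euclidean bubble equation in $\eps$ and in $x_0$ shows that $\partial_\eps \bar u$ and $\partial_{x_0}\bar u$ lie in the kernel of the linearization $L\phi=(-\Delta)^\gamma\phi-\tfrac{n+2\gamma}{n-2\gamma}\bar u^{4\gamma/(n-2\gamma)}\phi$. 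A direct computation on the Euclidean model then identifies $\Pgo(\partial_\eps u_{(x_k,\eps_k)})$ and $\Pgo(\nabla_{x_0}u_{(x_0,\eps_k)}|_{x_0=x_k})$, up to explicit dimensional constants, with $v_k^{(n+2\gamma)/(n-2\gamma)}$ multiplied by $\tfrac{\eps_k^2-d(x,x_k)^2}{\eps_k^2+d(x,x_k)^2}$ and $\tfrac{\eps_k\exp_{x_k}^{-1}(x)}{\eps_k^2+d(x,x_k)^2}$ respectively, plus errors of the same nature as in~(1).

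The main obstacle is the quantitative control of the error terms, which come from three sources: the cutoff $\varphi_{x_k}$, the deviation of $g_0$ from the flat model in normal coordinates, and---most delicately in the fractional regime---the nonlocal tail of $\Pgo$ acting on the truncated bubble. Because $\bar u$ and its parameter derivatives decay polynomially at infinity in the rescaled variable, each contribution produces an $L^{2n/(n+2\gamma)}(M)$ quantity of order $o(1)$ as $\eps_k\to0$, which after pairing with $w_k\in H^\gamma$ yields the required $o(1)\|w_k\|_{H^\gamma}$ bound; the nonlocal tail is handled along the lines of the test-function analysis in \cite{Kim2018}.
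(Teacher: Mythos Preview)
Your framework---first-order conditions at the minimizer, followed by self-adjointness of $\Pgo$---is exactly how the paper begins. The divergence is in how the quantity $\Pgo v_k$ is identified with $s_\infty v_k^{(n+2\gamma)/(n-2\gamma)}$ plus a small remainder.

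You propose to compute $\Pgo$ on the transplanted bubble directly, by comparing it to $(-\Delta_{\R^n})^\gamma\bar u$ in rescaled normal coordinates and then controlling the cutoff, metric-deviation, and nonlocal-tail errors in $L^{2n/(n+2\gamma)}$. This is a legitimate route, but it is precisely the heavy test-function analysis of \cite{Kim2018}, and you defer it rather than carry it out.

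For item~(1) the paper sidesteps this entirely. Since $v_k=u_k-w_k$ and $\Pgo u_k=R(t_k)\,u_k^{(n+2\gamma)/(n-2\gamma)}$ is an \emph{exact} identity along the flow, one writes
\[
\Pgo v_k \;=\; R(t_k)\,u_k^{\frac{n+2\gamma}{n-2\gamma}}-\Pgo w_k
         \;=\; s_\infty v_k^{\frac{n+2\gamma}{n-2\gamma}}+I_k-\Pgo w_k,
\]
where $I_k=(R(t_k)-s_\infty)u_k^{(n+2\gamma)/(n-2\gamma)}+s_\infty\bigl(u_k^{(n+2\gamma)/(n-2\gamma)}-v_k^{(n+2\gamma)/(n-2\gamma)}\bigr)$. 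Pairing the Euler--Lagrange relation $\int_M w_k\,\Pgo v_k\,d\mu_0=0$ against this decomposition gives
\[
s_\infty\int_M v_k^{\frac{n+2\gamma}{n-2\gamma}}w_k\,d\mu_0
=-\int_M I_k\,w_k\,d\mu_0+\|w_k\|_{H^\gamma}^2.
\]
Now $I_k\to0$ in $L^{2n/(n+2\gamma)}$ follows immediately from Lemma~\ref{lem:Fpto0} and the profile decomposition, and $\|w_k\|_{H^\gamma}\to0$ disposes of the last term. No comparison between $\Pgo$ and the Euclidean fractional Laplacian is ever made.

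What this buys is that the burden of showing the bubble is an approximate solution of the manifold equation is transferred onto the quantities $R(t_k)-s_\infty$ and $u_k-v_k$, both of which the flow analysis has already controlled. Your approach is the generic one for an abstract Palais--Smale sequence; the paper exploits that this particular sequence comes from a flow satisfying a pointwise curvature equation. For items~(2) and~(3) the paper only says ``similarly'', and there some direct computation of $\Pgo$ on the parameter derivatives of the bubble---closer to what you outline---is indeed implicit.
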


\begin{proof}
By the choice of $\alpha_k$, we get
\[\int_M w_k\Pgo u_{(x_k,\eps_k)}\,d\mu_0=0.\]
Moreover, one can expand
\[\alpha_k\Pgo{u}_{(x_k,\eps_k)}=\Pgo(u_k-w_k)=R(t_k)u_k^{\frac{n+2\gamma}{n-2\gamma}}-\Pgo w_k=s_\infty v_{k}^{\frac{n+2\gamma}{n-2\gamma}}+I_k-\Pgo w_k\]
where
\[I_k=\left(R(t_k)-s_\infty\right)
        u_k^{\frac{n+2\gamma}{n-2\gamma}}
    +s_\infty\left(u_k^{\frac{n+2\gamma}{n-2\gamma}}
        -v_{k}^{\frac{n+2\gamma}{n-2\gamma}}\right)
    \to 0 \quad\textin L^{\frac{2n}{n+2\gamma}}.\]
Then
\[s_\infty\int_M w_k\Pgo v_{k}^{\frac{n+2\gamma}{n-2\gamma}} \,d\mu_0
=o(1)\|w_k\|_{L^{\frac{2n}{n-2\gamma}}}+\|w_k\|_{H^\gamma}^2=o(1)\|w_k\|_{H^\gamma},\]
establishing Claim (1). Claim (2) and Claim (3) can be proved similarly.
\end{proof}

\begin{lem}\label{lem:orth-decomp-2}
There exist constants $c>0$ and $k_0$ such that for $k\geq k_0$ there holds
\[\frac{n+2\gamma}{n-2\gamma}s_\infty\int_M v_k^{\frac{4\gamma}{n-2\gamma}}w_k^2\,d\mu_0
\leq (1-c)\int_M w_k\Pgo w_k\,d\mu_0.\]
\end{lem}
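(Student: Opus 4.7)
The plan is to argue by contradiction via a blow-up analysis at the concentrating bubble $v_k=\alpha_k u_{(x_k,\eps_k)}$, reducing the coercivity on $(M,g_0)$ to a spectral inequality for the linearized operator at the Aubin--Talenti bubble $\bar u$ on $\R^n$. This is the non-compact analog of Proposition \ref{Propo6.9}: the role of the weighted eigenfunctions of $\Pgo$ with respect to $\uinf^{4\gamma/(n-2\gamma)}$ is now played by the weighted eigenfunctions of $(-\Delta)^\gamma$ with respect to $\bar u^{4\gamma/(n-2\gamma)}$.

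Assuming the conclusion fails, I pass to a subsequence and normalize $\tilde w_k=w_k/\|w_k\|_{H^\gamma}$, so that $\|\tilde w_k\|_{H^\gamma}=1$ and
\[1-\tfrac{n+2\gamma}{n-2\gamma}s_\infty\int_M v_k^{4\gamma/(n-2\gamma)}\tilde w_k^2\,d\mu_0=o(1).\]
Since $\eps_k\to 0$, the natural rescaling is
\[\hat w_k(y)=\eps_k^{(n-2\gamma)/2}\tilde w_k\bigl(\exp_{x_k}(\eps_k y)\bigr)\varphi(\eps_k y),\qquad y\in\R^n.\]
In normal coordinates at $x_k$ the metric approaches the Euclidean one, the lower-order (curvature) part of $\Pgo$ becomes negligible at the blow-up scale, and $v_k$ transforms into a multiple of $\bar u$. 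Up to $o(1)$ errors this yields
\[\int_M \tilde w_k\Pgo\tilde w_k\,d\mu_0 = \int_{\R^n}\hat w_k(-\Delta)^\gamma\hat w_k\,dy,\qquad s_\infty\int_M v_k^{4\gamma/(n-2\gamma)}\tilde w_k^2\,d\mu_0 = c_\infty\int_{\R^n}\bar u^{4\gamma/(n-2\gamma)}\hat w_k^2\,dy,\]
for an explicit constant $c_\infty>0$ depending on $\bar\alpha_{n,\gamma}$ and $\lim\alpha_k$. Extracting a weak limit $\hat w_k\rightharpoonup\hat w$ in $H^\gamma(\R^n)$ and using the $|y|^{-4\gamma}$ decay of $\bar u^{4\gamma/(n-2\gamma)}$, the weighted integral passes to the limit, forcing $\hat w\not\equiv 0$ and
\[\int_{\R^n}\hat w(-\Delta)^\gamma\hat w\,dy \leq \tfrac{n+2\gamma}{n-2\gamma}c_\infty\int_{\R^n}\bar u^{4\gamma/(n-2\gamma)}\hat w^2\,dy.\]

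Next, I would rescale the three orthogonality relations of Lemma \ref{lem:5.1}. Using the pointwise identities
\[\tfrac{n-2\gamma}{2}\bar u+y\cdot\nabla\bar u = \tfrac{n-2\gamma}{2}\bar u\,\tfrac{1-|y|^2}{1+|y|^2},\qquad \nabla\bar u=-\tfrac{n-2\gamma}{1+|y|^2}\bar u\,y,\]
these rescale to exactly the conditions that $\hat w$ is orthogonal, in the weighted inner product $\int \bar u^{4\gamma/(n-2\gamma)}fg\,dy$, to $\bar u$ itself, to the dilation direction $\tfrac{n-2\gamma}{2}\bar u+y\cdot\nabla\bar u$, and to the translation directions $\partial_i\bar u$. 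These span the full non-positive spectral subspace of the linearized operator $L=(-\Delta)^\gamma-\tfrac{n+2\gamma}{n-2\gamma}\bar u^{4\gamma/(n-2\gamma)}$ at the fractional Aubin--Talenti bubble: this is the $\gamma\in(0,1)$ analog of the classical Rey/Bianchi--Egnell spectral analysis, which can be established by transferring to the ultraspherical operator on $\mathbb S^n$ via stereographic projection. Consequently, there is a spectral gap $c_0>0$ such that
\[\int_{\R^n}\phi(-\Delta)^\gamma\phi\,dy\geq (1+c_0)\tfrac{n+2\gamma}{n-2\gamma}c_\infty\int_{\R^n}\bar u^{4\gamma/(n-2\gamma)}\phi^2\,dy\]
for $\phi$ in the weighted orthogonal complement of the above subspace, contradicting the limit inequality obtained for the nonzero $\hat w$.

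The main obstacle is the rigorous passage from $(M,g_0)$ to $\R^n$: one must verify, most cleanly through the Chang--Gonz\'alez extension \eqref{eq:extension}, that $\Pgo$ rescales to $(-\Delta)^\gamma$ up to $o(1)$ at the blow-up scale while the lower-order defect term $R_{g_0}$ is harmless; that no mass of the weighted $L^2$ integral is lost to spatial infinity in the rescaled picture (this follows because $\bar u^{4\gamma/(n-2\gamma)}\in L^{n/(2\gamma)}$); and that the three non-local orthogonality conditions of Lemma \ref{lem:5.1} survive the weak limit. With these ingredients in place, the contradiction follows from the spectral gap above.
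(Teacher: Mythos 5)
Your proposal takes essentially the same approach as the paper: blow-up rescaling of the normalized $w_k$ at the bubble scale, extraction of a nontrivial weak limit $\hat w$ on $\R^n$, translation of the three orthogonality conditions from Lemma \ref{lem:5.1} into weighted orthogonality to $\bar u$ and its dilation/translation derivatives, and a spectral-gap contradiction obtained by stereographic projection to $\mathbb S^n$ and the explicit spectrum of $P^{g_{\mathbb S^n}}_\gamma$. The paper carries out the spherical computation explicitly (comparing the eigenvalue ratio $\Gamma(2+\tfrac n2+\gamma)/\Gamma(2+\tfrac n2-\gamma)$ against $\alpha_{n,\gamma}^{4\gamma/(n-2\gamma)}\tfrac{n+2\gamma}{n-2\gamma}2^{-2\gamma}$), which you outsource to the known Bianchi--Egnell-type spectral fact, but the argument is the same.
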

\begin{proof}
Suppose it were not true. Then one would be able to extract a sequence of rescaled $\tilde w_k=a_kw_k$ such that
\[1=\int_M \tilde w_k\Pgo \tilde w_k\,d\mu_0\leq \liminf_{k\to \infty}\frac{n+2\gamma}{n-2\gamma}s_\infty\int_M v_k^{\frac{4\gamma}{n-2\gamma}}\tilde w_k^2\,d\mu_0\]
Define
\[\hat w_k(x)=\eps_k^{\frac{n-2\gamma}{2}}\tilde w_k(\exp_{x_k}(\eps_k\xi)):B_{R/\epsilon_k}(0)\subset T_{x_k}M\to \mathbb{R}\]
for some $R<\imath_0$, the injectivity radius of $(M,g_0)$. Then $\hat w_k$ is bounded in $H^\gamma(B_{R/\eps_k}(0))$ and consequently $\hat w_k\rightharpoonup \hat w$ weakly in $H^\gamma_{\loc}(\mathbb{R}^n)$ for some $\hat w$ satisfying
\[\int_{\mathbb{R}^n}\frac{\hat w(\xi)^2}{(1+|\xi|^2)^{2\gamma}}\,d\xi>0\]
and
\begin{align}\label{eq:second-var}
    \int_{\mathbb{R}^n}\hat w(\xi)(-\Delta_{\mathbb{R}^n})^\gamma \hat w(\xi)\,d\xi
    \leq \alpha_{n,\gamma}^{\frac{4\gamma}{n-2\gamma}}\frac{n+2\gamma}{n-2\gamma}\int_{\mathbb{R}^n}\frac{\hat w(\xi)^2}{(1+|\xi|^2)^{2\gamma}}\,d\xi.
\end{align}
However, it follows from Lemma \ref{lem:5.1} that
\begin{equation}\label{eq:3equalities}
\begin{split}
    \int_{\mathbb{R}^n}\left(\frac{1}{1+|\xi|^2}\right)^{\frac{n+2\gamma}{2}}\hat w(\xi)\,d\xi&=0,\\
    \int_{\mathbb{R}^n}\left(\frac{1}{1+|\xi|^2}\right)^{\frac{n+2\gamma}{2}}\frac{1-|\xi|^2}{1+|\xi|^2}\hat w(\xi)\,d\xi&=0,\\
    \int_{\mathbb{R}^n}\left(\frac{1}{1+|\xi|^2}\right)^{\frac{n+2\gamma}{2}}\frac{\xi}{1+|\xi|^2}\hat w(\xi)\,d\xi&=0.
\end{split}
\end{equation}
We want to prove the above three equalities and \eqref{eq:second-var} together imply $\hat w(\xi)=0$, which will clearly give us a contradiction. To this end, it is better to work on sphere $\mathbb{S}^n$. Denote by $\Sigma$ the stereographic projection of the sphere $\mathbb{S}^n$ onto $\mathbb{R}^n$ with respect to the north pole. More precisely,
\begin{equation*}
\begin{split}
    \forall\, x=&\,(x_1,\dots, x_{n+1})\in \mathbb{S}^n, \quad \Sigma(x)=\xi=(\xi_1,\dots, \xi_{n})\in \mathbb{R}^n\\
  &\text{where } \xi_i=\frac{x_i}{1-x_{n+1}}.
\end{split}
\end{equation*}
It is known that the standard metric of $\mathbb{S}^n$ and $\mathbb{R}^n$ are related by
\[g_{\mathbb{S}^n}=\frac{4}{(1+|\xi|^2)^2} |d\xi|^2=\rho(\xi)^{\frac{4}{n-2\gamma}}|d\xi|^2,\quad \rho(\xi)=\left(\frac{2}{1+|\xi|^2}\right)^{\frac{n-2\gamma}{2}}.\]
For any $\hat w(\xi)\in H^\gamma(\mathbb{R}^n)$, we define a function $v$ on $\mathbb{S}^n$ by $v(x)=(\rho^{-1}\hat w)(\xi)$, $\xi=\Sigma(x)$. The conformal property reads
\begin{align*}
    (-\Delta_{\mathbb{R}^n})^\gamma \hat w &=\rho^{\frac{n+2\gamma}{n-2\gamma}} P^{g_{\mathbb{S}^n}}_\gamma (v).
\end{align*}
Consequently,
\begin{align}
    \int_{\mathbb{R}^n}\hat w(-\Delta_{\mathbb{R}^n})^\gamma \hat w\,d\xi &=\int_{\mathbb{S}^n} v P^{g_{\mathbb{S}^n}}_\gamma (v) \,d\mu_{\mathbb{S}^n},\label{eq:wv-1}\\
    \int_{\mathbb{R}^n}\rho^{\frac{4\gamma}{n-2\gamma}}\hat w^2(\xi)\,d\xi
    &=\int_{\mathbb{S}^n} v^2(x)\,d\mu_{g_{\mathbb{S}^n}}.\label{eq:wv-2}
\end{align}
The spectrum of $P^{g_{\mathbb{S}^n}}_\gamma$ is known; for example, see \cite{JX}. Namely, for any $k\geq 0$
\begin{equation*}
    P^{g_{\mathbb{S}^n}}_\gamma(Y^{(k)})=\frac{\Gamma(k+\frac{n}{2}+\gamma)}{\Gamma(k+\frac{n}{2}-\gamma)}Y^{(k)},
\end{equation*}
where $Y^{(k)}$ are spherical harmonics of degree $k\geq 0$ and $\Gamma$ is the Gamma function. The three equalities in \eqref{eq:3equalities} mean exactly that $v$ is orthogonal to any $Y^{(0)}$ and $Y^{(1)}$. Therefore
\begin{align*}
\int_{\mathbb{S}^n} v P^{g_{\mathbb{S}^n}}_\gamma(v)\,d\mu_{\mathbb{S}^n}
\geq \frac{\Gamma(2+\frac{n}{2}+\gamma)}{\Gamma(2+\frac{n}{2}-\gamma)}\int_{\mathbb{S}^n} v^2(x)\,d\mu_{g_{\mathbb{S}^n}}.
\end{align*}
Combining the above fact with \eqref{eq:second-var}, \eqref{eq:wv-1} and \eqref{eq:wv-2}, we shall obtain
\begin{align*}
    \frac{\Gamma(2+\frac{n}{2}+\gamma)}{\Gamma(2+\frac{n}{2}-\gamma)}\int_{\mathbb{R}^n}\rho^{\frac{4\gamma}{n-2\gamma}}\hat w^2(\xi)\,d\xi
    &\leq \alpha_{n,\gamma}^{\frac{4\gamma}{n-2\gamma}}\frac{n+2\gamma}{n-2\gamma}\int_{\mathbb{R}^n}\frac{\hat w^2(\xi)}{(1+|\xi|^2)^{2\gamma}}\,d\xi\\
    &=\alpha_{n,\gamma}^{\frac{4\gamma}{n-2\gamma}}\frac{n+2\gamma}{n-2\gamma}2^{-2\gamma}\int_{\mathbb{R}^n}\rho(x)^{\frac{4\gamma}{n-2\gamma}}\hat w^2(\xi)\,d\xi.
\end{align*}
Retrieving $\alpha_{n,\gamma}$ from \cite{Kim2018} gives
\[\alpha_{n,\gamma}=2^{\frac{n-2\gamma}{2}}\left(\frac{\Gamma(\frac{n}{2}+\gamma)}{\Gamma(\frac{n}{2}-\gamma)}\right)^{\frac{n-2\gamma}{4\gamma}}.\]
It is not difficult to see
\[\frac{\Gamma(2+\frac{n}{2}+\gamma)}{\Gamma(2+\frac{n}{2}-\gamma)}>\alpha_{n,\gamma}^{\frac{4\gamma}{n-2\gamma}}\frac{n+2\gamma}{n-2\gamma}2^{-2\gamma}\]
for $\gamma\in (0,1)$ and $n>2\gamma$. Thus we conclude
\[\int_{\mathbb{R}^n}\rho^{\frac{4\gamma}{n-2\gamma}}\hat w^2(\xi)\,d\xi=0,\]
implying that $\hat{w}(\xi)\equiv0$, a contradiction.
\end{proof}

With the above estimate we now give the proof of Proposition \ref{prop:4.16} in the noncompact case.

\begin{proof}[Proof of Proposition \ref{prop:4.16}]
\[\begin{split}
    E(u_k)
    &=\int_M (v_k+w_k)\Pgo(v_k+w_k)\,d\mu_0\\
    &=\int_M v_k\Pgo v_k\,d\mu_0
        +2\int_{M}R(t_k)
            u_k^{\frac{n+2\gamma}{n-2\gamma}}
            w_k\,d\mu_0
        -\int_M w_k\Pgo w_k\,d\mu_0\\
    &=s_\infty+2\int_{M}
        \left(R(t_k)-s_\infty\right)
        u_k^{\frac{n+2\gamma}{n-2\gamma}}w_k\,d\mu_0\\
    &\quad\;-\int_{M}
        \left(w_k\Pgo{w_k}
            -\frac{n+2\gamma}{n-2\gamma}s_\infty v_k^{\frac{4\gamma}{n-2\gamma}}w_k^2
        \right)\,d\mu_0
        +J_k,
\end{split}
\]
where
\[\begin{split}
    J_k&=(E(v_k)-s_\infty)\left(\int_M v_k^{\frac{2n}{n-2\gamma}}d\mu_0\right)^{\frac{n-2\gamma}{n}}+s_\infty\left(\left(\int_M v_k^{\frac{2n}{n-2\gamma}}d\mu_0\right)^{\frac{n-2\gamma}{n}}-1\right)\\
    &\quad\;+s_\infty\int_M \left(2 u_k^{\frac{n+2\gamma}{n-2\gamma}}w_k-\frac{n+2\gamma}{n-2\gamma} v_k^{\frac{4\gamma}{n-2\gamma}}w_k^2\right)d\mu_0.
\end{split}\]
Since $E(v_k)<s_\infty$, we have
\begin{multline*}
J_k
\leq s_\infty \int_M \Bigg(
    -\frac{n-2\gamma}{n}u_k^{\frac{2n}{n-2\gamma}}
    +\frac{n-2\gamma}{n}v_k^{\frac{2n}{n-2\gamma}}
\\
    +2u_k^{\frac{n+2\gamma}{n-2\gamma}}w_k
    -\frac{n+2\gamma}{n-2\gamma}v_k^{\frac{4\gamma}{n-2\gamma}}w_k^2
    \Bigg)\,d\mu_0.
\end{multline*}
Similar to the case when $u_\infty>0$, one can get the estimate
\[J_k\leq C\|w_k\|_{H^\gamma}^{\min\set{\frac{2n}{n-2\gamma},3}}.\]
Lemma \ref{lem:orth-decomp-2} yields that
\[\|w_k\|^2_{H^\gamma}\leq C\int_M\left(w_k\Pgo{w_k}-\frac{n+2\gamma}{n-2\gamma}s_\infty v_k^{\frac{4}{n-2\gamma}}w_k^2\right)\,d\mu_0.\]
for $k\geq k_0$. The rest of proof follows from almost same lines as the compact case $u_\infty>0$.
\end{proof}

\appendix
\section{Some elliptic estimates}
Here we prove a Moser Harnack inequality; similar results can be found at \cite[Appendix A]{Almaraz2016} and \cite[Theorem 3.4]{QG}. For a fixed boundary point $(p_0,0)\in\partial{X}$, we consider local coordinates $(x,\rho)\in\R^n\times\R$ and use the notation
\[B_r^+=\set{(x,\rho)\in\bar {X}:\rho>0,\,d_{\bar g}((x,\rho),p_0)<r},\]
\[\Gamma_r^0=\set{(x,0)\in M:d_{g_0}(x,p_0)<r},\]
\[\Gamma_r^+=\set{(x,\rho)\in\bar X:\rho\geq0,\,d_{\bar g}((x,\rho),p_0)=r}.\]

\begin{prop}\label{prop:harnack}
Let $U$ be a nonnegative weak solution to
\[\left\{\begin{array}{@{\;}r@{\;}ll}
\Div(\rho^{1-2\gamma}\nabla{U})+E(\rho)U&=0
    \hfill\quad\textin{B}_{2r}^{+},\\
-\lim\limits_{\rho\to0}\rho^{1-2\gamma}\p_{\rho}U&=f(x)
    \hfill\quad\texton{\Gamma}_{2r}^{0}.
\end{array}\right.\]
where $|E(\rho)|\leq C\rho^{1-2\gamma}$. Then for each $\bar{p}>1$ and $q>\frac{n}{2\gamma}$,
\begin{multline*}
\sup_{B_r^+}U+\sup_{\Gamma_r^0}U
\\
\leq{C}_{\bar{p},q}
    \left[
        r^{-\frac{n+2-2\gamma}{\bar{p}}}\norm[L^{\bar{p}}(B_{2r}^+,\rho^{1-2\gamma})]{U}
        +r^{-\frac{n}{\bar{p}}}\norm[L^{\bar{p}}(\Gamma_{2r}^{0})]{U}+r^{2\gamma-\frac{n}{q}}||f||_{L^q(\Gamma_{2r}^0)}
    \right]
\end{multline*}
for some $C_{\bar{p},q}>0$ depending on $\bar{p}$ and $q$.

\end{prop}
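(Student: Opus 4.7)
The plan is to prove this by weighted Moser iteration, adapted to the $A_2$-Muckenhoupt weight $\rho^{1-2\gamma}$ (which lies in $A_2$ because $|1-2\gamma|<1$ when $\gamma\in(0,1)$). The overall structure parallels the classical Moser argument, but with a degenerate energy, a mixed interior/boundary control, and an inhomogeneous Neumann term $f$.

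\textbf{Step 1 (Caccioppoli).} For $p\geq 1$ and a radial cutoff $\eta\in C_c^\infty(B_{2r}^+)$ with $\eta\equiv 1$ on $B_r^+$ and $|\nabla\eta|\lesssim 1/(s-r)$, test the extension equation with $\eta^2 U^{p-1}$. Integration by parts, combined with $|E(\rho)|\leq C\rho^{1-2\gamma}$ and the boundary condition $-\lim_{\rho\to 0}\rho^{1-2\gamma}\partial_\rho U=f$, gives
\[
\tfrac{4(p-1)}{p^2}\!\int_{B^+_{2r}}\!\rho^{1-2\gamma}\eta^2|\nabla U^{p/2}|^2
\leq C\!\int_{B^+_{2r}}\!\rho^{1-2\gamma}U^p(|\nabla\eta|^2+\eta^2)
+\!\int_{\Gamma_{2r}^0}\!f\,U^{p-1}\eta^2.
\]

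\textbf{Step 2 (Weighted Sobolev--trace).} Because $\rho^{1-2\gamma}$ is $A_2$, there exist exponents $\kappa=\tfrac{n+2-2\gamma}{n-2\gamma}>1$ and $\sigma=\tfrac{n}{n-2\gamma}>1$ (with $\sigma$ being the trace exponent on $\Gamma^0$) such that for any $w\in C_c^\infty(B_{2r}^+)$,
\[
\Bigl(\int_{B_{2r}^+}\!\!\rho^{1-2\gamma}|w|^{2\kappa}\Bigr)^{1/\kappa}
+\Bigl(\int_{\Gamma_{2r}^0}|w|^{2\sigma}\Bigr)^{1/\sigma}
\leq C\!\int_{B_{2r}^+}\!\!\rho^{1-2\gamma}|\nabla w|^2
+Cr^{-2}\!\int_{B_{2r}^+}\!\!\rho^{1-2\gamma}|w|^2.
\]
Applied to $w=\eta U^{p/2}$, this upgrades the Caccioppoli bound into the reverse-Hölder-type inequality
\[
\|U\|_{L^{p\kappa}(B_r^+,\rho^{1-2\gamma})}^p+\|U\|_{L^{p\sigma}(\Gamma_r^0)}^p
\leq \frac{Cp}{(s-r)^2}\|U\|_{L^p(B_s^+,\rho^{1-2\gamma})}^p
+C\!\int_{\Gamma_{2r}^0}\!|f|\,U^{p-1}.
\]

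\textbf{Step 3 (Absorbing the boundary datum).} Since $q>n/(2\gamma)$, the conjugate exponent $q'<\sigma$; by Hölder and Young,
\[
\int_{\Gamma^0_{2r}}|f|U^{p-1}
\leq \varepsilon\|U\|_{L^{p\sigma}(\Gamma_r^0)}^p
+C_\varepsilon\bigl(r^{2\gamma-n/q}\|f\|_{L^q(\Gamma_{2r}^0)}\bigr)^{\frac{p}{p-(p-1)\sigma/\sigma}}\cdot(\ldots),
\]
with the $\varepsilon$-term absorbed on the left. What remains is a recursion of the schematic form
\[
\Phi(p\chi,r)\leq \Bigl(\tfrac{Cp}{(s-r)^2}\Bigr)^{1/p}\Phi(p,s)+\mathcal{F},
\]
where $\chi=\min(\kappa,\sigma)>1$, $\Phi(p,r):=\|U\|_{L^p(B_r^+,\rho^{1-2\gamma})}+\|U\|_{L^p(\Gamma_r^0)}$, and $\mathcal{F}:=r^{2\gamma-n/q}\|f\|_{L^q}$.

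\textbf{Step 4 (Iteration).} Start with $p_0=\bar p>1$ and iterate $p_{k+1}=\chi p_k$, $r_k=r(1+2^{-k})$. Because $\sum 1/p_k<\infty$ and $\sum k/p_k<\infty$, the product of constants telescopes to a finite quantity, giving
\[
\sup_{B_r^+}U+\sup_{\Gamma_r^0}U
\leq C\Phi(\bar p,2r)+C\,r^{2\gamma-n/q}\|f\|_{L^q(\Gamma_{2r}^0)},
\]
which, after rescaling the $L^{\bar p}$ norms to account for the measures $\rho^{1-2\gamma}dxd\rho$ and $dx$ respectively, is the claimed estimate.

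The main technical obstacle is \emph{Step 2}: one must invoke (or establish) the weighted Sobolev--trace inequality coupling the bulk $\rho^{1-2\gamma}$-weighted space with the unweighted boundary space on $\Gamma^0$, and verify that the resulting two exponents $\kappa$ and $\sigma$ can be iterated together consistently. Once this is in hand, the rest of the argument is standard Moser iteration plus the Hölder absorption of the $f$-term using the hypothesis $q>n/(2\gamma)$, which is precisely the critical integrability ensuring $\sigma q'<\sigma^2$ so that the boundary integral can be absorbed.
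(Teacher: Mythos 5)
Your proposal is structurally the same weighted Moser iteration the paper uses (Caccioppoli estimate, weighted trace--Sobolev inequalities, iterate on increasing exponents with shrinking radii), so Steps 1, 2, and 4 are on the right track. The problem is concentrated in Step 3. As written, the Young-inequality exponent $p/(p-(p-1)\sigma/\sigma)$ simplifies to $p$ (since $(p-1)\sigma/\sigma=p-1$), the factor ``$\cdot(\ldots)$'' is left unspecified, and, more importantly, the schematic recursion
\[
\Phi(p\chi,r)\le\Bigl(\tfrac{Cp}{(s-r)^2}\Bigr)^{1/p}\Phi(p,s)+\mathcal{F}
\]
is claimed with a $p$-independent $\mathcal{F}$. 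But the constant in front of the $f$-term that emerges from the H\"older--Young split does depend on $p$ (and on the radius at the current step), so you must verify that the accumulated constants over the iteration still form a convergent product/sum. As stated, the argument does not close without this bookkeeping, and it is nontrivial to make the exponents match across iterations.

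The paper sidesteps exactly this difficulty with a standard normalization trick that your plan does not use: set $\ell=\|f\|_{L^q(\Gamma_{2r}^0)}$ and replace $U$ by $\bar U=U+\ell$. Then $\||f|/\ell\|_{L^q(\Gamma_{2r}^0)}\le 1$, and after testing with $\eta^2\bar U^\beta$ the boundary term becomes
\[
\int_{\Gamma_{2r}^0}\eta^2\,\frac{|f|}{\ell}\,\bar U^{\beta+1}\,dx
\le\eps\bigl\|\eta\bar U^{\frac{\beta+1}{2}}\bigr\|^2_{L^{\frac{2n}{n-2\gamma}}(\Gamma_{2r}^0)}
+C_\eps\bigl\|\eta\bar U^{\frac{\beta+1}{2}}\bigr\|^2_{L^{2}(\Gamma_{2r}^0)},
\]
where the hypothesis $q>n/(2\gamma)$ is precisely what makes the first term subcritical for the trace exponent $2n/(n-2\gamma)$ (your observation that $q'<\sigma=n/(n-2\gamma)$). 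The $\eps$-term is absorbed into the trace--Sobolev left-hand side, the second term folds into the low-order bulk term, and the resulting recursion in $\Phi$ is \emph{fully homogeneous} in $\bar U$ with no additive inhomogeneous term at all. Summability of the iteration constants is then the standard geometric-series calculation, and the $\|f\|_{L^q}$ contribution reappears for free at the end because $\sup\bar U=\sup U+\ell$. This is the one missing idea; with it, your Steps 1, 2, and 4 go through essentially as sketched, modulo matching the bulk Sobolev exponent with the trace exponent so that the iteration gain $\theta$ is a single fixed number exceeding $1$ (the paper takes $\theta=\min\{n/(n-2\gamma),k\}$ with $k\in(1,2(n+1)/n)$ coming from the weighted bulk embedding).
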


\begin{proof}
The Moser iteration process is by now a very standard approach. We will just sketch the main steps. Details can be found in \cite{Almaraz2016} and \cite{QG}. Since we are just using the local information, we will prove the Harnack inequality in the Euclidean case and use $y>0$ as the extension variable.

After scaling we can assume $r=1$. Let $\ell=\|f\|_{L^q(\Gamma_{2}^0)}$ and $0\leq \eta\in C^1_c(B_2^+)$. We will work with the case $\ell>0$, for otherwise we may let an arbitrary positive $\ell$ tend to zero. Set $\bar U=U+\ell$ and, for simplicity, $a=1-2\gamma$. Firstly by multiplying the equation by $\eta^2 \bar U^\beta$ for some $\beta>0$ and integrating by parts, we have
\[\begin{split}
&\quad\;2\int_{B_2^+}y^a\eta \bar U^\beta \nabla \eta \nabla \bar U\,dxdy
    +\beta \int_{B_2^+}y^a\eta^2\bar U^{\beta -1}|\nabla \bar U|^2 \,dxdy
    +\int_{\Gamma_{2}^0}\eta^2\bar U^\beta f(x) \,dx\\
&=\int_{B^+_2}E(y)\eta^2 \bar U^{\beta+1}\,dxdy.
\end{split}\]
Using H\"{o}lder's inequality to handle the cross term, we simplify it using Young's inequality as
\begin{equation*}\begin{split}
\int_{B_2^+}y^a\eta^2\bar U^{\beta-1}|\nabla \bar U|^2\,dxdy
&\leq \frac{C}{\beta^2}\int_{B_2^+}y^a|\nabla \eta|\bar U^{\beta+1}\,dxdy
    +\frac{C}{\beta}\int_{\Gamma_2^0}\eta^2\frac{|f|}{\ell}\bar U^{\beta+1}\,dx\\
&\quad\;+\frac{C}{\beta}\int_{B_2^+}y^a\eta^2\bar U^{\beta+1}\,dx.
\end{split}\end{equation*}
Define $w=\bar U^{\frac{1+\beta}{2}}$ and insert it to the above equation. One gets
\begin{equation}
\begin{split}\label{eq:A-4}
&\quad\;\int_{B_2^+}y^a|\nabla (\eta w)|^2\,dxdy\\
&\leq C\frac{(\beta+1)^2}{\beta^2}
    \int_{B_2^+}y^a(|\nabla\eta|^2+\eta^2)w^2\,dxdy
    +C\frac{(\beta+1)^2}{\beta}\int_{\Gamma_{2}^0}\eta^2w^2 \frac{|f|}{\ell}\,dx\\
&=:I_1+I_2.
\end{split}
\end{equation}
For the left hand side above, one uses the trace Sobolev and weighted Sobolev embedding (see \cite[Corollary 5.3 and Proposition 3.3]{QG}) to obtain
\begin{equation}\label{eq:A-6}
C\int_{B_2^+}y^a|\nabla (\eta w)|^2\,dxdy
\geq \left(\int_{\Gamma_2^0}(\eta w)^{\frac{2n}{n-2\gamma}}\,dx\right)^{\frac{n-2\gamma}{n}}
    +\left(\int_{B_2^+}y^a(\eta w)^k \,dxdy\right)^{\frac{2}{k}},
\end{equation}
where $C>0$ is some constant and $k\in(1, 2(n+1)/n)$.

Next we estimate $I_2$ in \eqref{eq:A-4}. We have
\begin{equation}\label{eq:A-5}\begin{split}
\int_{\Gamma_2^0}\eta^2w^2\frac{|f|}{k}dx
&\leq \left\|\frac{|f|}{k}\right\|_{L^q(\Gamma_2^0)}\|\eta w\|_{L^{2q/(q-1)}(\Gamma_2^0)}^2\\
&\leq \epsilon\|\eta w\|_{L^{2n/(n-2\gamma)}(\Gamma_2^0)}^2
    +\epsilon^{-\frac{n}{2\gamma q-n}}\|\eta w\|_{L^2(\Gamma_2^0)}^2.
\end{split}\end{equation}
Choosing $\epsilon $ small enough, the first term of the right hand side can be absorbed  in to left hand side of \eqref{eq:A-6}. Plugging \eqref{eq:A-5} and \eqref{eq:A-6} back into \eqref{eq:A-4}, one gets
\begin{equation}
\begin{split}\label{eq:A-10}
&\quad\;\left(\int_{\Gamma_2^0}(\eta{w})^{\frac{2n}{n-2\gamma}}\,dx\right)^{\frac{n-2\gamma}{n}}
    +\left(\int_{B_2^+}y^a(\eta{w})^k\,dxdy\right)^{\frac{2}{k}}\\
&\leq C(1+\beta)^{\frac{4\gamma q}{2\gamma q-n}}\left[\int_{B_2^+}y^a(|\nabla \eta|^2+\eta^2)w^2\,dxdy
    +\int_{\Gamma_2^0}(\eta w)^2\,dx\right].
\end{split}
\end{equation}
For any $1\leq r_1\leq r_2\leq 2$, we choose $\eta$ as a cut-off function satisfying $0\leq \eta\leq 1$, $\eta\leq 2/(r_2-r_1)$ and $\eta=1$ in $B_{r_1}^+$ and $\eta=0$ on $B_2^+\backslash B_{r_2}^+$. With this $\eta$ in \eqref{eq:A-10}, we obtain, in terms of $\bar{U}$,
\begin{equation}
\begin{split}\label{eq:A-11}
&\quad\;\left(\int_{\Gamma_{r_1}^0}\bar U^{\frac{(\beta+1)n}{n-2\gamma}}\,dx\right)^{\frac{n-2\gamma}{n}}
    +\left(\int_{B_{r_1}^+}y^a\bar U^{(\beta+1)k}\,dxdy\right)^{\frac1k}\\
&\leq C\frac{(1+\beta)^\frac{4\gamma q}{2\gamma q-n}}{(r_2-r_1)^2}\left(\int_{\Gamma_{r_2}^0}\bar U^{\beta+1}\,dx
    +\int_{B_{r_2}^+}y^a\bar U^{\beta+1}\,dxdy\right).
\end{split}
\end{equation}
If we set
$$\Phi(p,r)=\left(\int_{\Gamma_{r}^0}\bar U^{p}\,dx\right)^{\frac{1}{p}}+\left(\int_{B_{r}^+}y^a\bar U^{p}\,dxdy\right)^{\frac1p}$$
and $\theta=\min\{\frac{n}{n-2\gamma},k\}>1$, then \eqref{eq:A-11} becomes
\[
\Phi(\theta (\beta+1),r_1)\leq \left(\frac{C(1+\beta)^{\frac{2\gamma q}{2\gamma q-n}}}{r_2-r_1}\right)^{\frac{2}{\beta+1}}\Phi(\beta+1, r_2).
\]
Now we can iterate the above inequality by setting $R_m=1+1/2^m$ and $\theta_m=\theta^m\bar{p}$. Then
\[\Phi(\theta_m,1)
    \leq \Phi(\theta_m,R_m)
    \leq(c_1\theta)^{c_2\sum_{i=0}^{m-1}i/\theta^i}\Phi(\bar{p},2)
    \leq C\Phi(\bar{p},2)
\]
for some constant $C$, because the series $\sum_{i=0}^\infty i/\theta^i$ is convergent. Finally, since
\[\lim_{p\to \infty}\Phi(p,1)=\sup_{\Gamma_1^0}\bar U+\sup_{B_1^+}\bar U,
\]
we have
\[\sup_{\Gamma_1^0}U+\sup_{B_1^+}U
\leq C\left[||U||_{L^{\bar{p}}(B_2^+,y^a)}
    +||U||_{L^{\bar{p}}(\Gamma_2^0)}
    +||f||_{L^{q}(\Gamma_2^0)}\right].
\]
Rescaling back to $B_{2r}^{+}$, we conclude the proof of theorem.
\end{proof}

\begin{prop}\label{prop:uni_upper}
Suppose $(M^n,g_0)$ is the comformal infinity of a Poincar\'{e}--Einstein manifold with $n>2\gamma$. For each $q>\frac{n}{2\gamma}$ we can find positive constants $\eta_0=\eta_0(M,g_0,q,C_1)$ and $C=C(M,g_0,q,C_1)$ with the following significance: if $g=u^\frac{4}{n-2\gamma}g_0$ is a conformal metric  and $R=P_{\gamma}^{g}(1)$ satisfying
\begin{equation}\label{eq:A3assump}
\int_M u^{\frac{2n}{n-2\gamma}}\,d\mu_0
    +\int_M u\Pgo u \,d\mu_0
    \leq C_1
    \quad\text{ and }\quad
    \int_{\Gamma_{2r}^{0}(x)}|R|^q\,d\mu_g\leq \eta_0
\end{equation}
for $x\in M$, then we have
\[u(x)
\leq C.
\]
\end{prop}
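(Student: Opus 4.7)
The strategy is to reduce to the linear Moser--Harnack estimate of Proposition \ref{prop:harnack} by viewing the conformally covariant equation $\Pgo u = R u^{(n+2\gamma)/(n-2\gamma)}$ as a linear problem with a \emph{small} boundary potential. Writing $V := R u^{4\gamma/(n-2\gamma)}$, the extension $U$ of $u$ to $(X, \bar{g}_0)$ satisfies
\[
\Div(\rho^{1-2\gamma}\nabla U) = 0 \textin B_{2r}^{+}, \qquad -c_{\gamma}\lim_{\rho\to 0}\rho^{1-2\gamma}\partial_{\rho}U = (V - R_{g_0})u \texton \Gamma_{2r}^{0}.
\]
The first observation is that the hypothesis $\int_{\Gamma_{2r}^{0}}|R|^{q}\,d\mu_{g}\leq \eta_{0}$, combined with $q>n/(2\gamma)$ and the volume bound $\int u^{2n/(n-2\gamma)}\,d\mu_0\leq C_1$, yields via H\"older's inequality on $(M,g)$
\[
\int_{\Gamma_{2r}^{0}}|V|^{n/(2\gamma)}\,d\mu_0 \;=\; \int_{\Gamma_{2r}^{0}}|R|^{n/(2\gamma)}\,d\mu_{g} \;\leq\; \eta_{0}^{n/(2\gamma q)}\,C_{1}^{1-n/(2\gamma q)},
\]
so that $V$ is small in the critical space $L^{n/(2\gamma)}(\Gamma_{2r}^0)$; this is precisely the smallness required for a Moser iteration on an equation with a boundary potential.

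Having rewritten the problem in this way, I would adapt the iteration of Proposition \ref{prop:harnack}. Testing the extension equation against $\eta^{2}\bar{U}^{\beta}$, with $\bar{U}=U+\delta$ and $\delta\to 0^+$ taken at the end, the analogue of \eqref{eq:A-4} (with $w = \bar{U}^{(\beta+1)/2}$) carries a new boundary contribution of the form $C(1+\beta)\int_{\Gamma_2^0}\eta^2 w^2|V|\,dx$, which is controlled by H\"older with conjugate exponents $n/(2\gamma)$ and $n/(n-2\gamma)$:
\[
\int_{\Gamma_2^0}\eta^2 w^2|V|\,dx \;\leq\; \|V\|_{L^{n/(2\gamma)}(\Gamma_2^0)}\,\|\eta w\|_{L^{2n/(n-2\gamma)}(\Gamma_2^0)}^{2}.
\]
For $\eta_0$ sufficiently small this is absorbed into the left-hand side via the trace-Sobolev inequality used in the proof of Proposition \ref{prop:harnack}, producing a reverse-H\"older estimate of the shape \eqref{eq:A-11}. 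The geometric iteration on $R_m=1+2^{-m}$ and $\theta_m=\theta^m\bar p$, starting from $\bar p=2n/(n-2\gamma)$, then carries over verbatim and yields
\[
\sup_{\Gamma_r^0}u+\sup_{B_r^+}U \;\leq\; C\Bigl(\|U\|_{L^{2n/(n-2\gamma)}(B_{2r}^+,\rho^{1-2\gamma})} + \|u\|_{L^{2n/(n-2\gamma)}(\Gamma_{2r}^0)}\Bigr).
\]
Both norms on the right are controlled by $C_1$: the boundary one directly by assumption \eqref{eq:A3assump}, and the bulk one through the extension energy $\int\rho^{1-2\gamma}|\nabla U|^2\,d\bar\mu_0\leq C\int u\Pgo u\,d\mu_0\leq CC_1$ combined with the weighted Sobolev embedding of \cite{QG}.

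The main technical point I anticipate is the quantitative tracking of constants through the iteration: since the factor $(1+\beta)$ multiplying $\|V\|_{L^{n/(2\gamma)}}$ grows without bound in $\beta$, naive absorption would force $\eta_0$ to depend on $\beta$. The standard resolution, as in Trudinger's treatment of the scalar curvature equation, is that the $L^{n/(2\gamma)}$-smallness of $V$ is only used at the very first step; once $u$ is known to lie in $L^{p}$ for some $p>2n/(n-2\gamma)$, all subsequent iterations can be carried out with the $L^{q}$ bound on $R$, $q>n/(2\gamma)$, exactly as in the proof of Proposition \ref{prop:harnack}, and the resulting powers of $(1+\beta)$ are summable against $\theta^{-m}$. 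A single choice of $\eta_0$, depending only on $M$, $g_0$, $q$, and $C_1$, therefore suffices.
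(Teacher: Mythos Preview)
Your approach is correct and is a genuinely different route from the paper's. The paper follows Brendle's doubling technique: it applies Proposition~\ref{prop:harnack} as a black box with inhomogeneous boundary data $f=\Pgo u$, obtains
\[
r^{\frac{n-2\gamma}{2}}\sup_{B_r^+}U
\;\le\; C + C r^{\frac{n+2\gamma}{2}-\frac{n}{q}}\Bigl(\int_{\Gamma_{2r}^0}|\Pgo u|^q\,d\mu_0\Bigr)^{1/q},
\]
rewrites $|\Pgo u|^q\,d\mu_0=|R|^q u^{\frac{(n+2\gamma)q-2n}{n-2\gamma}}\,d\mu_g$, and then runs a scale-invariant bootstrap: one picks the radius $r_0$ maximizing $(r-s)^{\frac{n-2\gamma}{2}}\sup_{B_s^+}U$, plugs the sup back into the $u$-power, and arrives at an inequality of the form $X\le K+KX^{\alpha}\eta_0^{1/q}$ with $\alpha>1$, which for $\eta_0$ small forces $X\le 2K$. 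In contrast, you treat the equation as linear with boundary potential $V=Ru^{4\gamma/(n-2\gamma)}$, exploit the exact identity $\|V\|_{L^{n/(2\gamma)}(d\mu_0)}^{n/(2\gamma)}=\|R\|_{L^{n/(2\gamma)}(d\mu_g)}^{n/(2\gamma)}$, and absorb the critical term in a single first Moser step before iterating subcritically. The paper's argument is shorter once Proposition~\ref{prop:harnack} is in hand and never reopens the iteration; yours is more self-contained and makes the $\varepsilon$-regularity mechanism transparent.

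One link in your last paragraph deserves to be made explicit. After the first step you have $u\in L^{p_1}(\Gamma^0)$ with $p_1>\frac{2n}{n-2\gamma}$, but Proposition~\ref{prop:harnack} is stated for $f\in L^q(d\mu_0)$, whereas the hypothesis gives only $R\in L^q(d\mu_g)$. What you need (and what actually holds) is that the potential itself lies in a subcritical space: writing
\[
\int|V|^{q'}\,d\mu_0
=\int\bigl(|R|\,u^{\frac{2n}{(n-2\gamma)q}}\bigr)^{q'}u^{\frac{q'(4\gamma q-2n)}{(n-2\gamma)q}}\,d\mu_0
\le \eta_0^{q'/q}\Bigl(\int u^{\frac{q'(4\gamma q-2n)}{(n-2\gamma)(q-q')}}\,d\mu_0\Bigr)^{1-q'/q},
\]
the exponent on $u$ tends to $\frac{2n}{n-2\gamma}$ as $q'\downarrow\frac{n}{2\gamma}$, so for $q'$ slightly supercritical the right side is controlled by the $L^{p_1}$ bound from step one. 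With $V\in L^{q'}(d\mu_0)$, $q'>\frac{n}{2\gamma}$, the remaining iterations proceed via the interpolation \eqref{eq:A-5} with $q$ replaced by $q'$, and the powers $(1+\beta)^{\frac{4\gamma q'}{2\gamma q'-n}}$ are summable as you say. With this clarification your argument closes.
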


Before we prove this proposition, we collect some useful estimates.
\begin{lem}\label{lem:A-ineq}
Let $x\in{M}$. Under the same assumptions as in Proposition \ref{prop:uni_upper}, there hold
\begin{equation}\label{eq:A-est}
r^{-2}\int_{B_{2r}^+(x)}
        \rho^{1-2\gamma}U^2
    \,d\mu_{\bar{g}_0}
\leq{C_2}
\end{equation}
and
\[
r^{-n}
    \int_{\Gamma_{2r}^{0}(x)}d\mu_g
\leq{C_2},
\]
where $C_2$ depends only on $C_1$.
\end{lem}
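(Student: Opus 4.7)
The plan is to extract both bounds from a global weighted Dirichlet energy estimate on the extension $U$ of $u$ combined with the weighted Sobolev and trace inequalities from \cite{QG}.

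The setup. The extension identity \eqref{eq:extension} and the hypothesis $\int_M u\,\Pgo u\,d\mu_0 \le C_1$ yield
\[
\int_X \rho^{1-2\gamma}|\nabla U|^2\,d\mu_{\bar g_0} \le C(C_1),
\]
and hence, by the weighted Sobolev inequality (Proposition~3.3 of \cite{QG}) and the associated trace embedding (Corollary~5.3 of \cite{QG}),
\[
\|U\|_{L^{\frac{2(n+2-2\gamma)}{n-2\gamma}}(X,\rho^{1-2\gamma})} + \|u\|_{L^{\frac{2n}{n-2\gamma}}(M)} \le C(C_1).
\]

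For the first estimate, I would apply H\"older with conjugate exponents $\frac{n+2-2\gamma}{2}$ and $\frac{n+2-2\gamma}{n-2\gamma}$:
\[
\int_{B_{2r}^+(x)} \rho^{1-2\gamma} U^2\,d\mu_{\bar g_0}
\le \left(\int_{B_{2r}^+(x)} \rho^{1-2\gamma}\,d\mu_{\bar g_0}\right)^{\frac{2}{n+2-2\gamma}}
    \|U\|^2_{L^{\frac{2(n+2-2\gamma)}{n-2\gamma}}(B_{2r}^+,\rho^{1-2\gamma})}.
\]
Since $\int_{B_{2r}^+} \rho^{1-2\gamma}\,d\mu_{\bar g_0}\le C r^{n+2-2\gamma}$, the first factor contributes $C r^{2}$, and the second is uniformly bounded by the global Sobolev bound. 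This immediately yields the first inequality.

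For the second estimate, the analogous H\"older-Sobolev argument on $\Gamma_{2r}^0$ using only $\|u\|_{L^{2n/(n-2\gamma)}(M)} \le C$ falls short of the required $r^n$ scaling, so I would invoke the equation $\Pgo u = R\, u^{\frac{n+2\gamma}{n-2\gamma}}$ together with the smallness hypothesis $\int_{\Gamma_{2r}^0}|R|^q\,d\mu_g \le \eta_0$ with $q>\frac{n}{2\gamma}$ to bootstrap. Taking $\tilde p = \frac{2n}{n+2\gamma}$ so that the $u$-exponent cancels against $d\mu_g = u^{\frac{2n}{n-2\gamma}}\,d\mu_0$, one has
\[
\int_{\Gamma_{2r}^0}|\Pgo u|^{\tilde p}\,d\mu_0 = \int_{\Gamma_{2r}^0}|R|^{\tilde p}\,d\mu_g \le \eta_0^{\tilde p / q}\, \mu_g(\Gamma_{2r}^0)^{1-\tilde p/q},
\]
which, fed into a localized $L^p$-regularity step for $\Pgo$ (at scale $r$, with cutoff), upgrades $u \in L^{\frac{2n}{n-2\gamma}}$ to $u \in L^{s}(\Gamma_{2r}^{0})$ for some $s > \frac{2n}{n-2\gamma}$. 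A final H\"older interpolation between this improved integrability and the volume of the ball $|\Gamma_{2r}^{0}| \le C r^{n}$ then delivers the desired $r^{n}$ scaling.

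The main obstacle I foresee is this last bootstrap step: one needs a localized fractional elliptic estimate that gains integrability while tracking the correct scaling in $r$, and the smallness parameter $\eta_0$ is crucial because it is what allows the nonlinear term $R\,u^{\frac{n+2\gamma}{n-2\gamma}}$ to be absorbed into the linear operator when running the local regularity argument.
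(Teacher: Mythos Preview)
Your treatment of the first estimate is correct and is exactly what the paper does: H\"older against the weighted measure $\rho^{1-2\gamma}$ on $B_{2r}^+$, followed by the global weighted Sobolev bound on $U$ coming from $\int_M u\,\Pgo u\,d\mu_0\le C_1$.

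For the second estimate, however, you are overcomplicating things, and your proposed bootstrap has two problems. The paper's proof is literally ``The second estimate is immediate'': one simply bounds
\[
\int_{\Gamma_{2r}^0(x)} d\mu_g
=\int_{\Gamma_{2r}^0(x)} u^{\frac{2n}{n-2\gamma}}\,d\mu_0
\le \int_M u^{\frac{2n}{n-2\gamma}}\,d\mu_0
\le C_1,
\]
using only the first half of the hypothesis~\eqref{eq:A3assump}. The prefactor $r^{-n}$ in the displayed statement is an editing artifact (note the stray blank lines around it in the source); as written it cannot hold uniformly for $r\to 0$ with $C_2=C_2(C_1)$, and in the subsequent application to~\eqref{eq:sup_har} and~\eqref{eq:190.2} only the bound $\int_{\Gamma_{2r}^0} d\mu_g\le C_2$ is actually used.

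The issues with your bootstrap are: (i) it would make $C_2$ depend on $q$ and $\eta_0$, contradicting the lemma's explicit claim that $C_2$ depends only on $C_1$; and (ii) a single step of localized $W^{2\gamma,\tilde p}$ regularity only gains finite integrability, so after H\"older you would obtain at best $\int_{\Gamma_{2r}^0} d\mu_g \le C r^{n-\theta}$ for some $\theta>0$, not the full $r^n$. Reaching $r^n$ would require $u\in L^\infty_{\loc}$, which is precisely the conclusion of Proposition~\ref{prop:uni_upper} and hence circular here.
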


\begin{proof}
For the first assertion, using H\"{o}lder's inequality,
\begin{equation*}
\int_{B_{2r}^+}
    \rho^{1-2\gamma}U^2
    \,d\mu_{\bar{g}_0}
\leq{C}r^2
    \left(
        \int_{B_{2r}^+}\rho^{1-2\gamma}U^{\frac{2(n+2-2\gamma)}{n-2\gamma}}
        \,d\mu_{\bar{g}_0}
    \right)^{\frac{n-2\gamma}{n+2-2\gamma}}.
\end{equation*}
It follows from the weighted Sobolev embedding in \cite[Theorem 2]{horiuchi1989} and the weighted Poincar\'{e}--Hardy inequalities (see \cite{liouville}) that
\begin{equation*}
\begin{split}
&\quad\;
    \left(\int_{B_{2r}^+}\rho^{1-2\gamma}U^{\frac{2(n+2-2\gamma)}{n-2\gamma}} \,d\mu_{\bar{g}_0} \right)^{\frac{n-2\gamma}{n+2-2\gamma}}\\
&\leq C\int_{B_{2r}^{+}}
    \rho^{1-2\gamma}|\nabla{U}|_{\bar{g}_0}^2\,d\mu_{\bar{g}_0}
    +C\int_{B_{2r}^{+}}
        \rho^{-1-2\gamma} U^2\,d\mu_{\bar g_0}\\
&\leq{C}\int_{X}
    \rho^{1-2\gamma}|\nabla{U}|_{\bar{g}_0}^2\,d\mu_{\bar{g}_0}\\
&\leq C\int_{M}u\Pgo u\,d\mu_{0}
    +C\int_{M}u^2\,d\mu_{0}
\leq CC_1,
\end{split}
\end{equation*}
where $C$ is large enough constant that depends only on $(X,\bar g_0)$.

The second estimate is immediate.
\end{proof}

\begin{proof}[Proof of Proposition \ref{prop:uni_upper}]
The proof is similar to \cite[Proposition A.3]{almaraz5} where the author deals with the $\gamma=\frac12$ case. The key step is to obtain \cite[(187)]{brendle1} in our setting. This is the consequence of Proposition \ref{prop:harnack}, which, we stress again, holds on the manifold $(M^n,g_0)$. Let $U$ be the extension of $u$ to $X$, which satisfies
\begin{equation}
\left\{\begin{array}{@{\;}r@{\;}ll}
-\Div(\rho^{1-2\gamma}\nabla{U})+E_{g_0}(\rho)U
    &=0
    &\textin(X,\bar{g}_0),\\
U&=u
    &\texton(M,g_0),\\
-c_\gamma\lim\limits_{\rho\to0}\rho^{1-2\gamma}\p_{\rho}U
    &=\Pgo u
    &\texton(M,g_0).
\end{array}\right.
\end{equation}
It follows from Proposition \ref{prop:harnack} that for any center on $M$ and any small radius $r>0$,
\begin{equation}
\begin{split}\label{eq:190.1}
\sup_{\Gamma_r^0}U+\sup_{B_r^+}U
&\leq
    Cr^{-\frac{n+2-2\gamma}{2}}\left(\int_{B_{2r}^+}\rho^{1-2\gamma} U^2
        \,d\mu_{\bar{g}_0}\right)^{\frac12}\\
&\quad\;
    +Cr^{-\frac{n-2\gamma}{2}}\left(\int_{\Gamma_{2r}^0}u^{\frac{2n}{n-2\gamma}}
        \,d\mu_0\right)^{\frac{n-2\gamma}{2n}}\\
&\quad\;
    +Cr^{2\gamma-\frac{n}{q}}\left(\int_{\Gamma_{2r}^0}|\Pgo{u}|^q
        \,d\mu_0\right)^{\frac1q}.
\end{split}
\end{equation}
Notice that Lemma \ref{lem:A-ineq} and our assumption \eqref{eq:A3assump} imply
\begin{equation}\label{eq:sup_har}\begin{split}
r^{\frac{n-2\gamma}{2}}\sup_{B_r^+(x)}U
&\leq C_2r^{\frac{n-2\gamma}{2}}
    +Cr^{\frac{n+2\gamma}{2}-\frac{n}{q}}
    \left(
        \int_{\Gamma_{2r}^0(x)}|\Pgo{u}|^q\,d\mu_0
    \right)^{\frac1q}.
\end{split}\end{equation}

Now let us suppose $r_0$ is a real number such that $r_0<r$ and
\[(r-s)^{\frac{n-2\gamma}{2}}\sup_{B^+_{s}(x)}U\leq (r-r_0)^{\frac{n-2\gamma}{2}}\sup_{B_{r_0}^+(x)}U\]
for all $s<r$. Moreover, we can find $x_0\in B_{r_0}^+(x)$ such that
\[\sup_{B_{r_0}^+(x)}U=U(x_0).\]
We can assume $r$ is small that $d(x_0,M)$ is achieved by a unique point $x_0^*$ on $M$.
By the definition of $r_0$ and $x_0$, we have
\[\sup_{B^+_{\frac{r-r_0}{2}}(x^*_0)}U\leq \sup_{B^+_{\frac{r+r_0}{2}}(x)}U\leq 2^{\frac{n-2\gamma}{2}}U(x_0).\]

We want to show that \eqref{eq:sup_har}  implies the existence of a fixed constant $K=K(C_2)$ such that for \emph{all} $s\leq \frac{r-r_0}{2}$,
\begin{equation}\label{eq:190.2}
s^{\frac{n-2\gamma}{2}}U(x_0)
\leq K
    +K(s^{\frac{n-2\gamma}{2}}U(x_0))^{\frac{n+2\gamma}{n-2\gamma}
    -\frac{2n}{n-2\gamma}\frac1q}\left(\int_{B_r^+(x)}|R_g|^q\,d\mu_g\right)^{\frac1q}.
\end{equation}
To that end we distinguish two cases according to the size of $s$. If $0<s<\min\{2d(x_0,M),\frac{r-r_0}{2}\}$, then the interior Harnack inequality yields
\[
    s^{\frac{n-2\gamma}{2}}U(x_0)
\leq
    Cs^{-1}\left(\int_{B_{s}}\rho^{1-2\gamma} U^2\,d\mu_{\bar{g}_0}\right)^{\frac12}
\leq K,
\]
for a ball $B_s\subset X$, using estimates similar to \eqref{eq:A-est}. On the other hand, if $\min\{2d(x_0,M),\frac{r-r_0}{2}\}\leq s\leq  \frac{r-r_0}{2}$, then $x_0\in B_s^+(x_0^*)$ and $\Gamma_{2s}^0(x_0^*)\subset \Gamma_{2r}^0(x)$. We get from \eqref{eq:sup_har} that
\begin{equation}\label{eq:190.3}\begin{split}
s^{\frac{n-2\gamma}{2}}U(x_0)
&\leq
    C_2s^{\frac{n-2\gamma}{2}}
    +C s^{\frac{n+2\gamma}{2}-\frac{n}{q}}
        \left(\int_{\Gamma_{2r}^0(x)}|\Pgo u|^q\,d\mu_0\right)^{1/q}\\
&\leq
    Ks^{\frac{n-2\gamma}{2}}
    +Ks^{\frac{n+2\gamma}{2}-\frac{n}{q}}
        \left(\int_{\Gamma_{2r}^0(x)}u^{\frac{n+2\gamma}{n-2\gamma}q-\frac{2n}{n-2\gamma}}|R|^q\,d\mu_g\right)^{1/q},
\end{split}\end{equation}
so again we get \eqref{eq:190.2}. Therefore \eqref{eq:190.2} holds for any $s\leq \frac{r-r_0}{2}$.

Now we choose $\eta_0>0$ such that
\[
(2K)^{\frac{n+2\gamma}{n-2\gamma}-\frac{2n}{n-2\gamma}\frac1q}\eta_0^{\frac1q}\leq \frac12.
\]

We claim that
\[\left(\frac{r-r_0}{2}\right)^{\frac{n-2\gamma}{2}}U(x_0)\leq 2K.\]
Indeed, if, on the contrary, $2K\leq (\frac{r-r_0}{2})^{\frac{n-2\gamma}{2}}U(x_0)$, then we let $s=(\frac{2K}{U(x_0)})^{\frac{2}{n-2\gamma}}\leq \frac{r-r_0}{2}$ in \eqref{eq:190.2}, which yields
\begin{equation*}\begin{split}
2K
&\leq{K}+K(2K)^{\frac{n+2\gamma}{n-2\gamma}
    -\frac{2n}{n-2\gamma}\frac1q}\left(\int_{\Gamma_{2r}^{0}(x)}|R_g|^q\,d\mu_g\right)^{\frac1q}\\
&\leq
    K+K(2K)^{\frac{n+2\gamma}{n-2\gamma}-\frac{2n}{n-2\gamma}\frac1q}\eta_0^{\frac1q}.
\end{split}\end{equation*}
Clearly this contradicts the choice of $\eta_0$. Thus we must have
\[
\left(\frac{r-r_0}{2}\right)^{\frac{n-2\gamma}{2}}U(x_0)
\leq 2K.
\]
Using \eqref{eq:190.3} with $s$ replaced by $\frac{r-r_0}{2}$, we obtain
\begin{equation*}
\begin{split}
&\quad\;\left(\frac{r-r_0}{2}\right)^{\frac{n-2\gamma}{2}}U(x_0)\\
&\leq{K}\left(\frac{r-r_0}{2}\right)^{\frac{n-2\gamma}{2}}\\
&\quad\;
    +K(2K)^{\frac{4\gamma}{n-2\gamma}-\frac{2n}{n-2\gamma}\frac1q}
    \left(\int_{\Gamma_{2r}^{0}(x)}|R_g|^q\,d\mu_{g}\right)^{\frac1q}
    \left(\frac{r-r_0}{2}\right)^{\frac{n-2\gamma}{2}}U(x_0).
\end{split}
\end{equation*}
Since $\left(\int_{\Gamma_{2r}^{0}(x)}|R_g|^q\,d\mu_g\right)^{\frac1q}\leq \eta_0^{\frac{1}{q}}$ and $(2K)^{\frac{n+2\gamma}{n-2\gamma}-\frac{2n}{n-2\gamma}\frac1q}\eta_0^{\frac1q}\leq \frac12$, then
\begin{align*}
\left(\frac{r-r_0}{2}\right)^{\frac{n-2\gamma}{2}}U(x_0)
&\leq2K\left(\frac{r-r_0}{2}\right)^{\frac{n-2\gamma}{2}}
\leq2Kr^{\frac{n-2\gamma}{2}}.
\end{align*}
Thus we conclude that
\begin{align*}
r^{\frac{n-2\gamma}{2}}U(x)
&\leq 
    (r-r_0)^{\frac{n-2\gamma}{2}}U(x_0)\\
&\leq2^{\frac{n+2-2\gamma}{2}}Kr^{\frac{n-2\gamma}{2}},
\end{align*}
as desired.
\end{proof}

\bibliography{mybibfile}

\end{document}